\documentclass[11pt]{article}
\usepackage[utf8]{inputenc}
\usepackage{latexsym,amssymb,amsmath,graphics,cite,arydshln}
\usepackage{tikz}
\usepackage{fancyhdr}
\usepackage{lipsum}
\usepackage{lineno}
\usepackage{color}
\usepackage{makecell}
\usepackage{amsfonts}
\usepackage{multirow} %multirow for format of table  
\usepackage{xcolor}
\usepackage{colortbl}
\usepackage{booktabs}
\usepackage{diagbox}

\usepackage[colorlinks,
linkcolor=blue,       %%修改此处为你想要的颜色
anchorcolor=blue,  %%修改此处为你想要的颜色
citecolor=blue,        %%修改此处为你想要的颜色，例如修改blue为red
]{hyperref}

\newtheorem{Theorem}{Theorem}[section]

\newtheorem{Corollary}[Theorem]{Corollary}
\newtheorem{Lemma}[Theorem]{Lemma}

\newtheorem{Remark}[Theorem]{Remark}
\newcommand{\qed}{\hphantom{.}\hfill $\Box$\medbreak}

\begin{document}
\title{On $\ell$-weakly cross $t$-intersecting families for sets and vector spaces\footnote{(Corresponding author: Lijun Ji)}}

\author{\small  Shuhui \ Yu, Lijun\ Ji \\	\small Department of Mathematics and Physics, Suzhou Polytechnic University, Suzhou 215104, China\\ \small Department of Mathematics, Soochow University,	Suzhou 215006, China\\	\small E-mail address: yushuhui\_suda@163.com, jilijun@suda.edu.cn}
\date{}
\maketitle
\begin{abstract}
Let $[n]$ (resp. $V$) be an $n$-element set (resp. $n$-dimensional vector space over the finite field $\mathbb{F}_{q}$), and $\binom{[n]}{k}$ (resp. $\genfrac{[}{]}{0pt}{}{V}{k}$) denote the set of all $k$-subsets of $[n]$ (resp. $k$-dimensional subspaces of $V$). We say that $\mathcal{F}\subseteq\binom{[n]}{k}$ (resp. $\mathcal{F}\subseteq \genfrac{[}{]}{0pt}{}{V}{k}$) and $\mathcal{G}\subseteq \binom{[n]}{k'}$ (resp. $\mathcal{G}\subseteq \genfrac{[}{]}{0pt}{}{V}{k'}$) are $\ell$-weakly cross $t$-intersecting if $\sum_{1\leq i,j\leq \ell}|F_{i}\cap G_{j}|\geq \ell^{2}t-\ell+1$ (resp. $\sum_{1\leq i,j\leq \ell}\dim(F_{i}\cap G_{j})\geq \ell^{2}t-\ell+1$) for all distinct $F_{1},\ldots,F_{\ell}\in\mathcal{F}$ and $G_{1},\ldots,G_{\ell}\in\mathcal{G}$. In this paper, we provide an alternative proof of the set version of the $\ell$-weakly cross $t$-intersecting theorem and an explicit lower bound for $n$. Moreover, we prove that if $\mathcal{F}$ and $\mathcal{G}$ are $\ell$-weakly cross $t$-intersecting subspace families, then
\[
|\mathcal{F}| \cdot |\mathcal{G}| \leq\genfrac{[}{]}{0pt}{}{n-t}{k-t}\genfrac{[}{]}{0pt}{}{n-t}{k'-t}
\]
holds, provided that $n\geq (2k-t+1)(t+1)+(k-t+1)k'+k+2\ell-1$. This extends the theorem of Cao, Lu, Lv and Wang [J. Combin. Theory Ser. A 193 (2023), 105688], who established the upper bound for the product of the sizes of cross $t$-intersecting subspace families.

\medskip\noindent \textbf{Keywords}: weakly cross $t$-intersecting families, vector space, sunflower \smallskip
\end{abstract}

\section{Introduction}

The study of intersecting families occupies a central role in extremal set theory, originating from the following classical Erdős–Ko–Rado theorem \cite{erdos1961}. 
\begin{Theorem}[\cite{erdos1961}]\label{set-EKR}
Let $n$, $k$ and $t$ be positive integers satisfying $n > k > t$, and let $\mathcal{F} \subseteq \binom{[n]}{k}$ be a family of $k$-subsets of $[n] = \{1,2,\dots,n\}$ such that $|F \cap F'| \geq t$ for all $F, F' \in \mathcal{F}$. Then the following bounds hold.

\begin{itemize}
\item[(i)] If $t = 1$ and $n \geq 2k$, then $|\mathcal{F}| \leq \binom{n-1}{k-1}.$

\item[(ii)] If $t \geq 2$ and $n \geq n_{0}(k,t)$, then $|\mathcal{F}| \leq \binom{n-t}{k-t}.$

\end{itemize}
\end{Theorem}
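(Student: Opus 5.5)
We prove both parts by shifting plus a kernel/sunflower-type argument for part (ii).

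\textbf{Part (i).} We use Katona's cyclic permutation argument. Fix a cyclic ordering $\sigma$ of $[n]$, and call the $n$ sets of $k$ cyclically consecutive elements the \emph{arcs} of $\sigma$.

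First we show that if $n\ge 2k$, an intersecting family contains at most $k$ arcs of $\sigma$. Suppose the arc $A=\{a_1,\dots,a_k\}$ (in cyclic order) belongs to $\mathcal{F}$. Every other arc meeting $A$ either begins at some $a_{i+1}$ or ends at some $a_i$, for $1\le i\le k-1$. Since $n\ge 2k$, the arc ending at $a_i$ and the arc beginning at $a_{i+1}$ are disjoint, so at most one of each such pair lies in $\mathcal{F}$. Together with $A$ itself, this gives at most $1+(k-1)=k$ arcs.

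Now double count pairs $(\sigma,F)$ with $F\in\mathcal{F}$ an arc of $\sigma$. Each $k$-set is an arc of exactly $k!(n-k)!$ cyclic orderings, and there are $(n-1)!$ cyclic orderings in total. Hence
\[
|\mathcal{F}|\,k!(n-k)!\le k(n-1)!,
\]
which gives $|\mathcal{F}|\le\binom{n-1}{k-1}$.

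\textbf{Part (ii).} If some $t$-set $T$ lies in every member of $\mathcal{F}$, then $|\mathcal{F}|\le\binom{n-t}{k-t}$ immediately. So assume $\bigcap\mathcal{F}$ has size less than $t$; we will show that $|\mathcal{F}|$ is then much smaller than $\binom{n-t}{k-t}$.

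\begin{itemize}
\item[(a)] Fix $F_0\in\mathcal{F}$. Every $F\in\mathcal{F}$ meets $F_0$ in a set of size at least $t$, so $\mathcal{F}$ is covered by the families $\mathcal{F}_S=\{F\in\mathcal{F}: S\subseteq F\}$ over $S\in\binom{F_0}{t}$.
\item[(b)] Fix such an $S$. Since $\bigcap\mathcal{F}\not\supseteq S$, there is some $F_1\in\mathcal{F}$ with $S\not\subseteq F_1$. Each $F\in\mathcal{F}_S$ must meet $F_1$ in at least $t$ elements, and since $|S\cap F_1|\le t-1$, it must contain some element $x\in F_1\setminus S$ together with $S$. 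Therefore
\[
|\mathcal{F}_S|\le k\binom{n-t-1}{k-t-1}.
\]
\item[(c)] Summing over the $\binom{k}{t}$ choices of $S$ gives
\[
|\mathcal{F}|\le \binom{k}{t}\,k\binom{n-t-1}{k-t-1}=\binom{k}{t}\,k\,\frac{k-t}{n-t}\binom{n-t}{k-t}.
\]
This is less than $\binom{n-t}{k-t}$ once $n>t+k(k-t)\binom{k}{t}$, so we may take $n_0(k,t)$ to be any value exceeding this.
\end{itemize}

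The only delicate point is step (b): it needs a member $F_1$ that misses part of the specific $t$-set $S$, and such a member exists precisely because we assumed $\bigcap\mathcal{F}$ does not contain $S$. With that assumption in place, the bound on $|\mathcal{F}_S|$ and the final estimate are routine.
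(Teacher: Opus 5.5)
Your proof is correct, but the paper contains no proof to compare it with: the statement is quoted from Erd\H{o}s--Ko--Rado as background. The only accompanying remark is that the optimal threshold $n_0(k,t)=(k-t+1)(t+1)$ is due to Frankl and Wilson.

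Part (i) is Katona's cyclic-permutation argument, and the details hold. For $n\ge 2k$, the arc ending at $a_i$ and the arc starting at $a_{i+1}$ occupy $2k$ consecutive positions, so they are disjoint. A fixed $k$-set is an arc in exactly $k!(n-k)!$ of the $(n-1)!$ cyclic orders, which yields exactly $\binom{n-1}{k-1}$.

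In part (ii) the dichotomy on $|\bigcap\mathcal{F}|$ is sound. Step (b) is valid: since $|S\cap F_1|\le t-1$, every $F\supseteq S$ must contain a point of $F_1\setminus S$. The identity $\binom{n-t-1}{k-t-1}=\frac{k-t}{n-t}\binom{n-t}{k-t}$ then gives the explicit value $n_0(k,t)=t+k(k-t)\binom{k}{t}+1$. The statement leaves $n_0(k,t)$ unspecified, so this is all that is required. Your covering of $\mathcal{F}_S$ by the stars of the $(t+1)$-sets $S\cup\{x\}$ is the same device the paper uses to bound $\mathcal{E}_2$ and $\mathcal{E}_3$ through the families $\mathcal{F}(A)$ with $|A|=t+1$.

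Your route is completely elementary, and it gives uniqueness for free. When $|\bigcap\mathcal{F}|<t$ the inequality is strict, so for $n\ge n_0(k,t)$ equality forces $\mathcal{F}$ to be a full $t$-star. It does not reach the sharp threshold $(k-t+1)(t+1)$ cited in the paper. Your $n_0$ grows like $k^2\binom{k}{t}$, and getting down to $(k-t+1)(t+1)$ requires the considerably heavier arguments of Frankl (for $t\ge 15$) and Wilson (for all $t$).

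A cosmetic point: your opening sentence announces shifting, but no shifting is used anywhere in the argument.
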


It is known that the smallest value of $n_{0}(k,t)$ equals $(k-t+1)(t+1)$, which was proved by Frankl \cite{frankl1978} for $t \geq 15$ and subsequently by Wilson\cite{wilson1984} for all $t$.

This fundamental result has various generalizations, among which cross-intersecting families have received considerable attention. For a given positive integer $t$, we say $\mathcal{F}\subseteq \binom{[n]}{k}$ and $\mathcal{G}\subseteq \binom{[n]}{k'}$ are cross $t$-intersecting if $|F\cap G|\geq t$ holds for all $F\in\mathcal{F}$ and $G\in\mathcal{G}$. When $t=1$, cross $1$-intersecting is also referred to as cross intersecting. In \cite{pyber1986}, Pyber generalized  Erdős–Ko–Rado theorem to cross-intersecting setting by establishing the following results.

\begin{Theorem}[\cite{pyber1986}]\label{CI}
Let $n$, $k$ and $k'$ be positive integers satisfying $n > k\geq k'$. Suppose that  $\mathcal{F} \subseteq \binom{[n]}{k}$ and $\mathcal{G} \subseteq \binom{[n]}{k'}$ are cross intersecting families. Then 

\begin{itemize}
\item[(i)] If $k=k'$ and $n \geq 2k$, then $|\mathcal{F}||\mathcal{G}|  \leq \binom{n-1}{k-1}^{2}.$

\item[(ii)] If $k>k'$ and $n \geq 2k+k'-2$, then $|\mathcal{F}||\mathcal{G}|  \leq \binom{n-1}{k-1}\binom{n-1}{k'-1}.$

\end{itemize}
\end{Theorem}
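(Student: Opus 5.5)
Pyber's theorem (Theorem~\ref{CI}) can be proved by a shifting argument followed by a counting argument on the ground set. First, apply the standard $(i,j)$-shifts simultaneously to $\mathcal{F}$ and $\mathcal{G}$. These shifts preserve the sizes of both families and the cross intersecting property. So we may assume both families are shifted (left-compressed), and that $\mathcal{F}$ and $\mathcal{G}$ are maximal: replacing $\mathcal{G}$ by all $k'$-sets meeting every member of $\mathcal{F}$ only increases the product.

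For part (i), with $k=k'$ and $n\ge 2k$, the cleanest route avoids shifting. Use Katona's cyclic permutation method, averaging over cyclic orderings of $[n]$. Fix a cyclic order and let $a$ and $b$ be the numbers of arcs (intervals of length $k$) lying in $\mathcal{F}$ and in $\mathcal{G}$ respectively. The key claim is that cross intersecting arc families on a cycle of length $n\ge 2k$ satisfy $ab\le k^2$. One proves this by showing that if $\mathcal{F}$ contains an arc $A$, every arc of $\mathcal{G}$ must start or end inside $A$. This gives $b\le 2k-1$, and more precisely a Katona-type pairing yields $a+b\le 2k$ when both are nonempty, hence $ab\le k^2$. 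Averaging then needs a product form. Here we use that $|\mathcal{F}|=\frac{n}{(n-1)!}\cdot\mathbb{E}[a]\cdots$ is linear, so Cauchy--Schwarz is not directly available. Instead, prove the stronger additive statement $\frac{|\mathcal{F}|+|\mathcal{G}|}{2}\le\binom{n-1}{k-1}$ in the regime where both families are large, and use AM--GM. When one family is small, bound it trivially. I expect this to be delicate.

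A more robust alternative for (i) is the Kruskal--Katona approach. Shiftedness gives that $\mathcal{G}$ is contained in the family of sets meeting every member of $\mathcal{F}$. Equivalently, $\overline{\mathcal{F}}=\{[n]\setminus F\}$ and $\mathcal{G}$ are cross-disjoint-free: no $G$ lies inside a complement. So $|\mathcal{G}|\le\binom nk-|\partial_{k}\overline{\mathcal{F}}|$, where $\partial_k$ denotes the $k$-shadow of the $(n-k)$-sets $\overline{\mathcal{F}}$. Writing $|\mathcal{F}|$ in Lovász form $\binom{x}{n-k}$, Kruskal--Katona gives $|\partial_k\overline{\mathcal{F}}|\ge\binom{x}{k}$. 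Thus
\[
|\mathcal{F}||\mathcal{G}|\le \binom{x}{n-k}\left(\binom nk-\binom xk\right),
\]
and it remains to maximize this one-variable function over real $x\in[n-k,n]$. The task is to show that the maximum is attained at $x=n-1$, giving $\binom{n-1}{k-1}\binom{n-1}{k-1}$. The endpoint region near $x=n$ needs separate care, since $|\mathcal{F}|>\binom{n-1}{k-1}$ forces $\mathcal{G}$ to be tiny. This reduction uses only the (cross) intersecting structure and works equally for $k\ne k'$.

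For part (ii), run the same reduction with $\mathcal{G}\subseteq\binom{[n]}{k'}$. This gives $|\mathcal{G}|\le\binom{n}{k'}-\binom{x}{k'}$, where $|\mathcal{F}|=\binom{x}{n-k}$. One must then prove
\[
f(x)=\binom{x}{n-k}\left(\binom{n}{k'}-\binom{x}{k'}\right)\le \binom{n-1}{k-1}\binom{n-1}{k'-1}
\]
for $n-k\le x\le n$ whenever $n\ge 2k+k'-2$. The main obstacle is this analytic inequality. I would consider the ratio $f(x+\delta)/f(x)$, or the logarithmic derivative. The factor $\binom{x}{n-k}$ grows roughly like $(x/(x-n+k))^{\delta}$, while the second factor shrinks. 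Near $x=n-1$ the threshold $n\ge 2k+k'-2$ should be exactly what makes the derivative change sign there. The region $x<n-1$ could then be handled by monotonicity of the log-derivative, and $x>n-1$ by a crude bound on $\binom n{k'}-\binom x{k'}$. If the continuous analysis becomes unwieldy, I would instead use the shifted structure directly. Split $\mathcal{F}$ and $\mathcal{G}$ by whether they contain element $1$, bound the parts avoiding $1$ by induction on $n$ applied to the link families, and absorb the error terms using $n\ge 2k+k'-2$.
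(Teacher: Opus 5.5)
The paper only quotes this theorem from Pyber and gives no proof, so your proposal has to stand on its own. As written, it does not.

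\textbf{The cyclic route and the induction.} The cyclic-permutation route stops at the real difficulty. The bound $ab\le k^2$ in every cyclic order says nothing about $\mathbb{E}[a]\,\mathbb{E}[b]$. The additive bound $|\mathcal F|+|\mathcal G|\le 2\binom{n-1}{k-1}$ you want to use instead is false in general: take $\mathcal F=\{A\}$ and $\mathcal G$ all $k$-sets meeting $A$. You neither specify a range where it should hold nor prove it there. The closing induction on $n$ is likewise only a gesture.

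\textbf{The Kruskal--Katona route fails at its key claim.} Everything therefore rests on the Kruskal--Katona reduction. Its key claim, that $f(x)=\binom{x}{n-k}\bigl(\binom{n}{k'}-\binom{x}{k'}\bigr)$ is maximized on $[n-k,n]$ at $x=n-1$, is false. On $(n-k,n)$ both factors are positive and log-concave; the second is concave because $\binom{x}{k'}$ is convex for $x>k'-1$. So the interior point $n-1$ can be the maximum only if $f'(n-1)=0$, that is,
\[
\sum_{j=k}^{n-1}\frac1j=\frac{n-k'}{k'}\sum_{j=n-k'}^{n-1}\frac1j .
\]
This generically fails. The left side grows like $\ln(n/k)$ while the right side is at most $1$, so the threshold on $n$ is not what makes the derivative vanish at $n-1$. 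For $k=k'$ equality holds only at $n=2k$, and $f'(n-1)>0$ for $n>2k$.

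Concretely, take $k=k'=2$ and $n=20$. Then $|\mathcal F|=63<\binom{19.5}{18}$, and your bound only gives $|\mathcal G|\le 9$, while $63\cdot 9>\binom{19}{1}^2$. Since the exact value of $\binom{n}{k'}-\binom{x}{k'}$ is already too large, no crude estimate of it can help.

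\textbf{How to repair (i).} The missing idea is a role swap. If $x>n-1$, then $|\mathcal G|<\binom{n}{k'}-\binom{n-1}{k'}=\binom{n-1}{k'-1}$, and you should apply Kruskal--Katona to $\overline{\mathcal G}$ instead. For (i) this repairs the argument. By symmetry you may assume $x\le n-1$, and log-concavity together with $f'(n-1)\ge 0$ gives $f(x)\le f(n-1)$. For $k=k'$, the condition $f'(n-1)\ge0$ says that the mean of $1/j$ over $[k,n-1]$ is at least its mean over $[n-k,n-1]$, which holds because $n-k\ge k$.

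\textbf{Part (ii) needs more than the Lovász form.} Even with the swap, the Lovász form is too weak for (ii). Take $(n,k,k')=(20,10,2)$, so $n=2k+k'-2$.
\begin{itemize}
\item The value $|\mathcal F|=64000<\binom{18.5}{10}$ only yields $|\mathcal G|\le 190-\binom{18.5}{2}$, which permits $|\mathcal G|=28$.
\item Conversely, writing $28=\binom{y}{18}$ with $y\approx 19.16$, the swapped inequality only yields $|\mathcal F|\le\binom{20}{10}-\binom{y}{10}\approx 8.2\cdot 10^{4}$.
\end{itemize}
So $(64000,28)$ satisfies both real-valued Kruskal--Katona constraints, yet $64000\cdot 28=1{,}792{,}000>\binom{19}{9}\cdot 19=1{,}755{,}182$. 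No analysis of your one-variable functions can prove (ii) near the threshold.

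You need the exact cascade form of Kruskal--Katona, or equivalently Hilton's lemma reducing to lexicographically initial families, followed by a discrete computation. In this example the cascade $64000=\binom{18}{10}+\binom{16}{9}+\binom{15}{8}+\cdots$ gives a $2$-shadow of size at least $\binom{18}{2}+\binom{16}{1}+\binom{15}{0}=170$. Hence $|\mathcal G|\le 20$, which is small enough.
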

When $k>k'$, the lower bound on $n$ is not sharp. In \cite{MT1989}, Matsumoto and Tokushige improved the bound to $n\geq 2k$.
For $t\geq 2$, Tokushige \cite{T2013} obtained a similar upper bound for $\frac{k}{n}<1-\frac{1}{\sqrt[t]{2}}$ using the eigenvalue method. In the same paper \cite{T2013}, Tokushige conjectured that $|\mathcal{F}||\mathcal{G}|  \leq \binom{n-t}{k-t}\binom{n-t}{k'-t}$ holds when $n\geq (t+1)(\max\{k,k'\}-t+1)$. When $k=k'$,  the conjecture was first verified by  Frankl et al. \cite{FLST2014} for $t\geq 14$ and $n\geq (t+1)k$. Subsequently, Zhang and Wu \cite{ZW2025} solved the conjecture for $t\geq 3$ using the shift operator and generating set method. The remaining case $t=2$ was resolved by Tanaka and Tokushige \cite{TT2025} using the semidefinite programming method. When $k\neq k'$, Borg \cite{Borg2014,Borg2016} proved the conjecture for large $n$. Recently, He et al. \cite{HLWZ2025} proved the conjecture under the condition $t\geq 3$, Chen et al. \cite{CLWZ2025} confirmed the conjecture for $t=2$ and $n\geq 3.38k$.

 In recent years, there has been growing interest in intersecting families with various intersection conditions (for instance, almost intersecting families are considered in \cite{FK2021}). This direction is to investigate weaker intersection conditions that still imply the same upper bound, such as  the sum-type intersection condition on collections of sets \cite{nagy2025,frankl2026,KW2026}. In particular, Nagy \cite{nagy2025} (for the case $\ell=3$) and Frankl et al. \cite{frankl2026} obtained the following theorem.

\begin{Theorem}[\cite{frankl2026}]
Let $n$ and $k$ be positive integers. Suppose that $\mathcal{F}\subseteq \binom{[n]}{k}$ satisfies
\[
\sum_{1\leq i<j\leq \ell}|F_{i}\cap F_{j}|\geq \binom{\ell-1}{2}+1
\]
for every collection of $\ell$ distinct sets $F_{1},\dots,F_{\ell}\in\mathcal{F}$. If $n$ is sufficiently large, then $|\mathcal{F}|\leq \binom{n-1}{k-1}$. Moreover, the threshold $\binom{\ell-1}{2}+1$ is sharp.
\end{Theorem}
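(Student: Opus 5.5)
We would prove the upper bound by a degree dichotomy combined with the ``sunflower'' obstruction hidden in the condition. The basic observation is the following. Suppose $\mathcal{F}$ contains $\ell-1$ sets $F_1,\dots,F_{\ell-1}$ through a common element $x$ that pairwise meet only in $x$, together with a set $G\in\mathcal{F}$ that is disjoint from all of them. Then
\[
\sum_{1\le i<j\le \ell-1}|F_i\cap F_j|+\sum_{i}|F_i\cap G|=\binom{\ell-1}{2},
\]
which violates the hypothesis. Likewise $\mathcal{F}$ cannot contain $\ell$ pairwise disjoint sets. The whole proof consists in exploiting these two forbidden configurations.

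\textbf{Step 1 (high-degree element forces a star).} Let $x$ be an element of maximum degree, and let $\mathcal{F}_x=\{F\setminus\{x\}: x\in F\in\mathcal{F}\}$, a family of $(k-1)$-subsets of $[n]\setminus\{x\}$. Suppose some $G\in\mathcal{F}$ avoids $x$, and remove from $\mathcal{F}_x$ all members meeting $G$; this deletes at most $k\binom{n-2}{k-2}$ sets. If the remaining family contains $\ell-1$ pairwise disjoint sets, we obtain the forbidden configuration above.

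By the trivial Erdős matching bound, a family of $(k-1)$-sets with no $\ell-1$ pairwise disjoint members has size at most $(\ell-2)(k-1)\binom{n-2}{k-2}$. Hence if
\[
\deg(x)>d:=\bigl(k+(\ell-2)(k-1)\bigr)\binom{n-2}{k-2},
\]
then every member of $\mathcal{F}$ contains $x$, and $|\mathcal{F}|\le\binom{n-1}{k-1}$.

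\textbf{Step 2 (all degrees small gives a small family).} Now assume every element has degree at most $d=O(n^{k-2})$. Take a maximal collection of pairwise disjoint members of $\mathcal{F}$. It has at most $\ell-1$ sets, so its union $U$ satisfies $|U|\le(\ell-1)k$. Every member of $\mathcal{F}$ meets $U$, so
\[
|\mathcal{F}|\le (\ell-1)k\,d=O_{k,\ell}\bigl(n^{k-2}\bigr),
\]
which is below $\binom{n-1}{k-1}$ once $n$ is sufficiently large. This completes the bound.

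\textbf{Sharpness.} Take the full star $\{F: 1\in F\}$ together with one extra $k$-set $G$ not containing $1$. Any $\ell$ distinct members that all lie in the star give a sum of at least $\binom{\ell}{2}$. If $G$ is among the $\ell$ members, the pairs inside the star still contribute at least $\binom{\ell-1}{2}$. So this family satisfies the condition with threshold $\binom{\ell-1}{2}$ and has size $\binom{n-1}{k-1}+1$. Hence the threshold $\binom{\ell-1}{2}+1$ cannot be lowered.

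The main obstacle is the quantitative bookkeeping in Step 1. One has to make sure that, after discarding the sets meeting $G$, the residual family through $x$ is still large enough to force $\ell-1$ pairwise disjoint members, and that the threshold $d$ is of order $n^{k-2}$ so that Step 2 closes. If the crude matching bound turns out to be too weak for an explicit $n_0$, the first thing to try is Frankl's sharp Erdős matching theorem.
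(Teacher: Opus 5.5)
The paper gives no proof of this statement. It is quoted from \cite{frankl2026} only as motivation, so your argument has to be judged on its own, and judged that way it is correct and complete.

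\begin{itemize}
\item The two forbidden configurations are right. An $(\ell-1)$-petal sunflower with kernel $\{x\}$, together with one set disjoint from all its members, gives sum exactly $\binom{\ell-1}{2}$. Any $\ell$ pairwise disjoint sets give sum $0$.
\item The deletion count $k\binom{n-2}{k-2}$ and the covering bound $(\ell-2)(k-1)\binom{n-2}{k-2}$ are correct.
\item The star-plus-one-set example does show that the threshold cannot be lowered to $\binom{\ell-1}{2}$.
\end{itemize}

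Two remarks. First, you tacitly use $k\ge 2$: for $k=1$ the covering bound fails, since the family $\{\emptyset\}$ is nonempty but has no two disjoint members. This is harmless, because the statement itself needs $k\ge 2$. For $k=1$ and $\ell\ge 3$, the family $\{\{1\},\{2\}\}$ satisfies the hypothesis vacuously and has size $2>\binom{n-1}{0}$.

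Second, your closing worry is unnecessary, because the crude bound already closes the argument. Step 2 gives $|\mathcal{F}|\le(\ell-1)k\bigl(k+(\ell-2)(k-1)\bigr)\binom{n-2}{k-2}$, while $\binom{n-1}{k-1}=\frac{n-1}{k-1}\binom{n-2}{k-2}$. So $n-1>(k-1)k(\ell-1)\bigl(k+(\ell-2)(k-1)\bigr)$ suffices. You should state this explicit threshold $n_0=O(k^3\ell^2)$ rather than point toward Frankl's sharp Erd\H{o}s matching theorem.

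Your argument also lines up with the machinery the paper uses for Theorems~\ref{Set} and~\ref{Main}:
\begin{itemize}
\item Your Step 1 is the single-family, $t=1$ analogue of Lemma~\ref{setsunflower}: a large sunflower with kernel $K$ forces every other member to contain $K$. The difference is that you build the sunflower directly from a high-degree element, by deletion plus the trivial matching bound, rather than splitting on whether a sunflower exists.
\item Your Step 2 replaces the paper's classification of $\mathcal{F}$ into $\mathcal{E},\mathcal{E}_1,\mathcal{E}_2,\mathcal{E}_3$ relative to chosen $G_1,\dots,G_\ell$. Instead you observe that a maximal matching is a transversal of size at most $(\ell-1)k$.
\end{itemize}
That shortcut works because here there is one family, and the hypothesis itself bounds its matching number. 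In the paper's cross $t$-intersecting setting, the condition constrains only cross intersections and one must bound a product. That is what the paper's more elaborate case analysis is built for.
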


Recently, Ai et al. \cite{ai2026} weakened the condition of cross $t$-intersection. Let $n$, $k$ and $k'$ be positive integers and let $\mathcal{F}\subseteq \binom{[n]}{k}$ and $\mathcal{G}\subseteq \binom{[n]}{k'}$. Given positive integers $\ell$ and $t$,  if $$\sum_{1\leq i,j\leq \ell}|F_{i}\cap G_{j}|\geq \ell^2 t-\ell+1 $$ holds for every collection of $\ell$ distinct subsets $F_{1},\dots ,F_{\ell}\in \mathcal{F}$ and $G_{1},\dots ,G_{\ell}\in \mathcal{G}$, then $\mathcal{F}$ and $\mathcal{G}$ are said to be $\ell$-weakly cross $t$-intersecting. They proved that under the $\ell$-weakly cross $t$-intersecting condition, when $n$ is sufficiently large, the conclusion of Tokushige's conjecture still holds.

\begin{Theorem}[\cite{ai2026}]\label{Weakly}
Let $k,k',\ell$ and $t$ be positive integers and let $n$ be a sufficiently large integer with respect to parameters $k,k',\ell$ and $t$. Suppose that two families $\mathcal{F}\subseteq \binom{[n]}{k}$ and $\mathcal{G}\subseteq \binom{[n]}{k'}$ are $\ell$-weakly cross $t$-intersecting. Then, provided that $n$ is sufficiently large, the following bound holds
\[
|\mathcal{F}|\cdot |\mathcal{G}|\leq \binom{n-t}{k-t}\binom{n-t}{k'-t}.
\]
\end{Theorem}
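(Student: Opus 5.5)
We aim to prove Theorem~\ref{Weakly} by reducing to the ordinary cross $t$-intersecting situation, using a sunflower/kernel argument. The plan has four steps.

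\textbf{Step 1 (most pairs are cross $t$-intersecting).} Call a pair $(F,G)\in\mathcal{F}\times\mathcal{G}$ \emph{bad} if $|F\cap G|\le t-1$. Suppose we could pick distinct $F_1,\dots,F_\ell\in\mathcal{F}$ and distinct $G_1,\dots,G_\ell\in\mathcal{G}$ such that $(F_i,G_i)$ is bad for every $i$, and every pair $(F_i,G_j)$ meets in at most $t$ elements. Then
\[
\sum_{i,j}|F_i\cap G_j|\le \ell^2t-\ell,
\]
contradicting the hypothesis. So the bad pairs can contain no large ``matching'' of this controlled kind. In particular, if $\mathcal{F}$ and $\mathcal{G}$ are both large, say each at least $c\binom{n-t}{k-t}$ for a suitable constant $c$, then this forces a strong structure.

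\textbf{Step 2 (the kernel).} We may assume $|\mathcal{F}||\mathcal{G}|>\binom{n-t}{k-t}\binom{n-t}{k'-t}$. Then one family, and by a short averaging argument both, has size at least of order $n^{k-t}$ (resp.\ $n^{k'-t}$), since a trivial bound gives $|\mathcal{F}|,|\mathcal{G}|\le\binom nk$. Apply the Erdős–Rado sunflower lemma, or rather the Frankl-type ``kernel'' decomposition: each family splits into sets containing some $t$-set $T$ from a bounded collection of kernels, plus a remainder of size $O(n^{k-t-1})$. Large sunflowers with kernel $T$ allow choosing petals that are pairwise disjoint outside $T$. This freedom is exactly what makes the ``at most $t$'' condition in Step~1 achievable, since all intersections then come only from the kernels.

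\textbf{Step 3 (show the kernels coincide).} If $\mathcal{F}$ has a large sunflower with kernel $T$ and $\mathcal{G}$ has one with kernel $T'$ where $|T\cap T'|<t$, pick $\ell$ petals from each, pairwise disjoint outside the kernels. All $\ell^2$ pairs then meet in at most $|T\cap T'|\le t-1$ elements, giving a sum of at most $\ell^2(t-1)$, a contradiction. So all heavy kernels of $\mathcal{F}$ and $\mathcal{G}$ pairwise $t$-intersect. Since the kernels are $t$-sets, they must all be a single common $T$. Hence all but $O(n^{k-t-1})$ members of $\mathcal{F}$ contain $T$, and similarly for $\mathcal{G}$.

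\textbf{Step 4 (handle the exceptional members).} Let $\mathcal{F}_0$ be the members of $\mathcal{F}$ not containing $T$. For $F\in\mathcal{F}_0$, use $F$ together with $\ell-1$ members of $\mathcal{F}(T)$, and $\ell$ members $G_j\in\mathcal{G}(T)$ chosen to avoid $F\setminus T$ and to be spread out. This is possible for most choices, and it forces $|F\cap G_j|\le t-1$ for all $j$, giving a deficit of $\ell$, which is a contradiction. More precisely, $F$ kills many members of $\mathcal{G}(T)$: the $G\supseteq T$ meeting $F$ in at least $t$ elements number only $O(n^{k'-t-1})$. Consequently each exceptional $F$ forces $|\mathcal{G}|$ to lose order $n^{k'-t}$, while adding only $1$ to $|\mathcal{F}|$. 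Comparing $(|\mathcal{F}(T)|+|\mathcal{F}_0|)(|\mathcal{G}(T)|+|\mathcal{G}_0|)$ with $\binom{n-t}{k-t}\binom{n-t}{k'-t}$ gives the bound for large $n$.

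\textbf{Main obstacle.} The main difficulty is Step~4: controlling the $\ell$-tuple condition once exceptional sets are present. Exceptional sets could also interact among themselves, and the choices must keep all $\ell^2$ intersections bounded. I would first try a counting argument that chooses the $G_j$ randomly in $\mathcal{G}(T)$.
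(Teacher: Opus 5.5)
There is a genuine gap, and it lies in Step~2 rather than Step~4. Your decomposition (all but $O(n^{k-t-1})$ members of $\mathcal F$ contain one of boundedly many $t$-sets, each the kernel of a large sunflower) does not follow from the Erd\H{o}s--Rado lemma or the delta-system method. Those tools give sunflowers with uncontrolled kernel size, and for an arbitrary family of size $\Theta(n^{k-t})$ the claim is false. For instance, take all $k$-sets containing a fixed $(t-1)$-set and otherwise lying in $[m]$, with $m\approx n^{(k-t)/(k-t+1)}$: boundedly many $t$-sets cover only $O(m^{k-t})=o(n^{k-t})$ members.

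The structure has to be extracted from the weak cross-intersection hypothesis, and that is the real content of the paper's Lemma~\ref{Nsunflowerset}. Fix $G_1,\dots,G_\ell\in\mathcal G$ and sort each $F\in\mathcal F$ by its intersection pattern:
\begin{itemize}
\item[(a)] at most $\ell-1$ members meet every $G_i$ in fewer than $t$ points;
\item[(b)] at most $\ell-1$ meet exactly one $G_i$ in exactly $t$ points and the others in fewer, since otherwise the sum is at most $\ell^2t-\ell^2+\ell\le \ell^2t-\ell$ for $\ell\ge2$;
\item[(c)] members meeting some $G_i$ in at least $t+1$ points, or two of them in different $t$-sets, contain a $(t+1)$-subset of some $G_i\cup G_j$, so there are $O(n^{k-t-1})$ of them;
\item[(d)] every remaining member contains a $t$-subset $T$ of some $G_i\cap G_j$. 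If $T$ is not the kernel of an $r$-petal sunflower in $\mathcal F$, a maximal collection of members pairwise meeting exactly in $T$ has fewer than $r$ sets. Every other member containing $T$ meets one of these outside $T$, so only $O(n^{k-t-1})$ members contain $T$.
\end{itemize}

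Your claim that both families are large has the same problem. The bound $|\mathcal F|\le\binom nk$ only forces $|\mathcal G|$ to be of order $n^{k'-2t}$. What you need is $|\mathcal F|\le \ell-1+\ell\binom{k'}{t}\binom{n-t}{k-t}$, which again comes from fixing $\ell$ members of $\mathcal G$. This requires $|\mathcal G|\ge\ell$, which the statement and the paper's proof assume implicitly. Without it the statement fails: for $\ell\ge2$, taking $|\mathcal G|=1$ makes the hypothesis vacuous, and then $\mathcal F=\binom{[n]}{k}$ violates the bound for large $n$ whenever $k'<2t$.

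Conversely, Step~4 is not an obstacle. Its first sentence is already a complete argument: it is the sunflower lemma the paper uses (Lemma~\ref{setsunflower}) with the roles swapped. Suppose $\mathcal G$ has a sunflower with kernel $T$ and $(k+1)\ell$ petals. Take any $F\in\mathcal F$ with $T\not\subseteq F$, any $\ell-1$ further members of $\mathcal F$, and $\ell$ sunflower members whose petals miss all $\ell$ chosen sets. Every intersection then lies inside $T$, and the row of $F$ is at most $t-1$, so the sum is at most $\ell^2t-\ell$, a contradiction. Hence exceptional members do not exist, and no counting or random choice is needed.

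This also makes Step~3 unnecessary. Once both families contain such sunflowers, each lies in a star, giving $|\mathcal F|\le\binom{n-t}{k-t}$ and $|\mathcal G|\le\binom{n-t}{k'-t}$ directly. The only case needing work is when one family, say $\mathcal F$, contains no $r$-petal sunflower with a $t$-element kernel. There the classification above gives $|\mathcal F|=O(n^{k-t-1})$ against $|\mathcal G|=O(n^{k'-t})$, so the product falls below the bound for large $n$. Repaired this way, your argument becomes the paper's proof.
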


They proceed by contradiction. Firstly, they assumed that the product size is greater than the above bound. Then, by using the properties of sunflowers and through the deletion method, they proved Lemma \ref{setsunflower} and Corollary \ref{Cor1}. Subsequently, using the Erdős matching theorem, which requires $n$ to be sufficiently large, they  estimated the upper bound of the size of $\mathcal{F}$ and $\mathcal{G}$.  Finally, they derived a contradiction by utilizing the property of $\ell$-weakly cross $t$-intersection. We present a new proof of Theorem~\ref{Weakly} that avoids the Erdős matching theorem, and establish an explicit lower bound on $n$ for which the theorem is valid.

Notice that the case $\ell=1$ of $\ell$-weakly cross $t$-intersecting coincides precisely with the usual definition of cross $t$-intersecting. Since the cross $t$-intersecting case has already been proved, we may assume $\ell \geq 2$.

\begin{Theorem}\label{Set}
Let $k, k', \ell$, and $t$ be positive integers such that $k \geq k' \geq t+1$ and $\ell \geq 2$. Let $n$ be an integer satisfying $n \geq \frac{k^{2}\ell^{4}}{2}\binom{2k}{t+1}\binom{k}{t}+t.$ Suppose that two families $\mathcal{F} \subseteq \binom{[n]}{k}$ and $\mathcal{G} \subseteq \binom{[n]}{k'}$ are $\ell$-weakly cross $t$-intersecting. Then
\[
|\mathcal{F}| \cdot |\mathcal{G}| \le \binom{n-t}{k-t} \binom{n-t}{k'-t}.
\]
The equality holds only if $\mathcal{F}=\{F\in \binom{[n]}{k}\colon T\subseteq F\}$ and  $\mathcal{G}=\{G\in \binom{[n]}{k'}\colon T\subseteq G\}$ for some $t$-subset $T$ of $[n]$.
\end{Theorem}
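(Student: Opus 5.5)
We argue by contradiction: assume $|\mathcal{F}|\cdot|\mathcal{G}| \ge \binom{n-t}{k-t}\binom{n-t}{k'-t}$ and that $(\mathcal{F},\mathcal{G})$ is not the pair of full stars of a common $t$-set $T$. The first step is to remove a small exceptional part so that what remains is genuinely cross $t$-intersecting. Call a pair $(F,G)\in\mathcal{F}\times\mathcal{G}$ \emph{bad} if $|F\cap G|\le t-1$. If some $\ell$ distinct $F_i$ and $\ell$ distinct $G_j$ were pairwise bad on a perfect matching of indices, say $(F_i,G_i)$, then
\[
\sum_{i,j}|F_i\cap G_j|\le \ell(t-1)+(\ell^2-\ell)k,
\]
which is not yet a contradiction. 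The useful observation is therefore counting-based. The condition fails as soon as the total deficit $\sum_{i,j}(t-|F_i\cap G_j|)\ge \ell$, because the intersection sum is then at most $\ell^2t-\ell$. Hence the entries exceeding $t$ must compensate almost every bad pair. This suggests that the bipartite ``bad graph'' between $\mathcal{F}$ and $\mathcal{G}$ has no matching of size $\ell$ among members whose pairwise intersections are at most $t$.

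To control intersections larger than $t$, we use sunflowers. By the Erdős--Rado sunflower lemma, any subfamily of $\mathcal{F}$ (or $\mathcal{G}$) of size exceeding $k!\,(s-1)^k$ contains a sunflower with $s$ petals. Fix a $(t+1)$-set or a $t$-set kernel $K$. If $K\subseteq F$ for many $F$ forming a sunflower with kernel $K$, then a generic $G$ meets at most $|G|$ petals beyond $K$. This lets us choose $F_1,\dots,F_\ell$ so that $|F_i\cap G_j|=|K\cap G_j|$ for the chosen $G_j$.

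The plan is to prove the analogue of the paper's Lemma~\ref{setsunflower} and Corollary~\ref{Cor1}: every $t$-set kernel $T$ that is ``heavy'' in both families (contained in many members) must be common, and there is a $t$-set $T$ such that all but a bounded number, at most $c(k,\ell)$, of members of $\mathcal{F}$ and of $\mathcal{G}$ contain $T$. Indeed, if no $t$-set covers almost all of $\mathcal{G}$, we find $G_1,\dots,G_\ell$ and a sunflower $F_1,\dots,F_\ell$ with $|F_i\cap G_j|\le t-1$ for enough pairs to give deficit $\ge\ell$.

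Next comes the counting step that replaces the Erdős matching theorem. Write $\mathcal{F}=\mathcal{F}_T\cup\mathcal{F}'$ and $\mathcal{G}=\mathcal{G}_T\cup\mathcal{G}'$, where the members of $\mathcal{F}_T$ and $\mathcal{G}_T$ contain $T$. A set $F\in\mathcal{F}'$ satisfies $|F\cap T|\le t-1$. For such $F$, we count the members $G\supseteq T$ with $|F\cap G|\ge t$, which must hold for all but fewer than $\ell$ of the $G\in\mathcal{G}_T$ up to compensation. Here $G$ must contain an extra element of $F\setminus T$, so the number of $G\supseteq T$ with $|F\cap G|\le t-1$ is at least
\[
\binom{n-t}{k'-t}-\binom{k}{1}\binom{n-t-1}{k'-t-1}.
\]
The $\ell$-weak condition forces all but $O(\ell)$ of these to be absent from $\mathcal{G}$. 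This makes
\[
|\mathcal{G}_T|\le \binom{n-t}{k'-t}-\Omega\!\left(\binom{n-t}{k'-t}\right)
\]
whenever $\mathcal{F}'\neq\emptyset$. Comparing the loss $\Omega(\cdot)$ against the gain $|\mathcal{F}'|\le c(k,\ell)$, together with
\[
|\mathcal{F}_T|\le\binom{n-t}{k-t},
\]
yields a strict inequality once $n\ge \tfrac{k^2\ell^4}{2}\binom{2k}{t+1}\binom{k}{t}+t$. The constants $\binom{2k}{t+1}$ and $\binom{k}{t}$ arise from the number of $(t+1)$-subsets of $F\cup G$ and of $t$-subsets of $F$ used as potential kernels. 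Thus $\mathcal{F}'=\mathcal{G}'=\emptyset$, and equality forces full stars.

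The main obstacle is the structural lemma producing a common $t$-set $T$ with boundedly many exceptions, with explicit constants. I would first try a direct deletion argument: repeatedly pick a $t$-set $T$ maximizing $|\mathcal{F}_T|$. Using $|\mathcal{F}|\ge \binom{n-t}{k-t}\binom{n-t}{k'-t}/|\mathcal{G}|\ge\binom{n-t}{k-t}$, we get $|\mathcal{F}_T|\ge |\mathcal{F}|/\binom{k}{t}$. From this we build a large sunflower in $\mathcal{F}_T$ with kernel exactly $T$, so that each $G$ meets its petals in at most $k'$ places. The intersection inequality then forces $|G\cap T|=t$ for all but fewer than $\ell$ members $G$, symmetrically for $\mathcal{F}$, and this is where the factors $\ell^4$ and $k^2$ should come from.
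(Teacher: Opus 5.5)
There is a genuine gap. Everything rests on the structural lemma: a $t$-set $T$ contained in all but $c(k,\ell)$ members of \emph{both} families. You yourself flag it as the main obstacle, and the sketch you offer for it breaks early.

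From $|\mathcal{F}||\mathcal{G}|\ge\binom{n-t}{k-t}\binom{n-t}{k'-t}$ you cannot infer $|\mathcal{F}|\ge\binom{n-t}{k-t}$. That inference needs $|\mathcal{G}|\le\binom{n-t}{k'-t}$, which is part of what must be proved. The product hypothesis by itself allows one family to be far below star size while the other is far above it. This lopsided situation is exactly where the difficulty lies, and nothing in your proposal addresses it.

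Three further steps are unjustified:
\begin{itemize}
\item The averaging $|\mathcal{F}_T|\ge|\mathcal{F}|/\binom{k}{t}$ would need one $k$-set meeting every member of $\mathcal{F}$ in at least $t$ points. What is true is weaker: for fixed $G_1,\dots,G_\ell\in\mathcal{G}$, at most $\ell-1$ members of $\mathcal{F}$ meet every $G_i$ in fewer than $t$ points, which gives $\max_T|\mathcal{F}_T|\ge(|\mathcal{F}|-\ell+1)/(\ell\binom{k'}{t})$.
\item Erd\H{o}s--Rado gives no control on the kernel size, so it cannot produce sunflowers with kernel exactly $T$.
\item Your step ``if no $t$-set covers almost all of $\mathcal{G}$, we find \dots\ a sunflower $F_1,\dots,F_\ell$'' presupposes that $\mathcal{F}$ already contains a large sunflower with a $t$-element kernel.
\end{itemize}

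The missing idea is the paper's dichotomy in Lemma~\ref{Nsunflowerset}. Suppose $\mathcal{F}$ contains no $\mathcal{S}(k,t,r)$. Then for each $t$-set $T$, a maximal subfamily of $\mathcal{F}_T$ whose members pairwise meet exactly in $T$ has fewer than $r$ members, and every other member of $\mathcal{F}_T$ hits one of its petals. Hence $|\mathcal{F}_T|\le(r-1)\bigl(1+(k-t)\binom{n-t-1}{k-t-1}\bigr)$, and so $|\mathcal{F}|=O_{k,\ell}\bigl(\binom{n-t-1}{k-t-1}\bigr)$. The paper obtains such a bound through its $\mathcal{E},\mathcal{E}_1,\mathcal{E}_2,\mathcal{E}_3$ decomposition, closing with EKR in the subcase where $\mathcal{G}$ is $t$-intersecting. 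The product hypothesis then forces $|\mathcal{G}|\ge\ell\sum_{h\ge t}\binom{k}{h}\binom{n-k}{k'-h}+\ell$ (cf.\ Lemma~\ref{setupperbound2}). For any fixed $F_1,\dots,F_\ell\in\mathcal{F}$, this leaves $\ell$ members of $\mathcal{G}$ meeting every $F_j$ in at most $t-1$ points, so the intersection sum is at most $\ell^2(t-1)$, a contradiction. This counting, not a stability estimate, is what replaces the Erd\H{o}s matching theorem and disposes of the lopsided case.

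Once both families contain such sunflowers, Lemma~\ref{setsunflower} gives \emph{exact} containment of the kernels. Your loss-versus-gain step is therefore unnecessary. As written it also glosses over compensation: to keep the other rows at exactly $t$ per entry, you must choose the $F_i$ from a sunflower with kernel $T$ whose petals avoid the $G_j$.

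Your heavy-kernel idea, repaired as above and combined with this final counting, does yield a complete proof that bypasses the paper's case split and EKR.

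Finally, the statement tacitly needs $|\mathcal{F}|,|\mathcal{G}|\ge\ell$, which the paper also uses silently when it picks $\ell$ members of $\mathcal{F}$. Take $t=2$, $k=k'=3$, any $\ell\ge 2$, $\mathcal{F}=\{F_0\}$ and $\mathcal{G}=\binom{[n]}{3}$. The condition is vacuous, but $|\mathcal{F}||\mathcal{G}|=\binom{n}{3}>(n-2)^2$.
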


Intersection problems are studied on some other mathematical objects, for example, vector spaces. Let $V$ be an $n$-dimensional vector space over the finite field $\mathbb{F}_{q}$ of order $q$.  Write the family of all $k$-subspaces of $V$ as $\genfrac{[}{]}{0pt}{}{V}{k}$. Recall that for any positive integers $a$ and $b$ the Gaussian binomial coefficient is defined by $\genfrac{[}{]}{0pt}{}{a}{b}_{q}= \prod_{i=0}^{b-1} \frac{q^{\,a-i\,}-1}{q^{\,b-i\,}-1}.$ In addition, we set
$\genfrac{[}{]}{0pt}{}{a}{0}_{q}=1$, and $\genfrac{[}{]}{0pt}{}{a}{b}_{q}=0$ if $b$ is a negative integer. The size of $\genfrac{[}{]}{0pt}{}{V}{k}$ is equal to $\genfrac{[}{]}{0pt}{}{n}{k}_{q}$. For brevity, we suppress $q$ from the notation in the following. 

For a positive integer $t$, a subspace family $\mathcal{F}\subseteq \genfrac{[}{]}{0pt}{}{V}{k}$ is said to be $t$-intersecting if $\dim(F\cap F')\geq t$ for any $F,F'\in \mathcal{F}$. Some extremal results of intersecting families in the set setting also have corresponding subspace versions (see \cite{FW1986,Hsieh1975,CLWZ2020,BBCFMPS}). We say that two families $\mathcal{F}\subseteq \genfrac{[}{]}{0pt}{}{V}{k}$ and $\mathcal{G}\subseteq \genfrac{[}{]}{0pt}{}{V}{k'}$ are cross $t$-intersecting if $\dim(F\cap G)\geq t$ for any $F\in \mathcal{F}$ and $G\in \mathcal{G}$. In \cite{Cao2023}, Cao et al. gave an upper bound on $\prod_{i=1}^{r} |\mathcal{F}_{i}|$ for the $r$-cross $t$-intersecting subspace family and characterized the extremal structure of the subspace family when the upper bound is achieved. We refer the readers to \cite{Cao2023,ST2014} for more details on cross $t$-intersecting subspace families.

\begin{Theorem}[\cite{Cao2023}]\label{Cross}
Let $n, r, k_{1}, k_{2}, \ldots, k_{r}$ and $t$ be positive integers with $r\geq 2$, $k_{1}\geq k_{2}\geq \cdots \geq k_{r}\geq t$
and $n\geq k_{1}+k_{2}+t+1$. If $\mathcal{F}_{1}\subseteq \genfrac{[}{]}{0pt}{}{V}{k_{1}}$, $\mathcal{F}_{2}\subseteq \genfrac{[}{]}{0pt}{}{V}{k_{2}}$, $\ldots$, $\mathcal{F}_{r}\subseteq \genfrac{[}{]}{0pt}{}{V}{k_{r}}$ are $r$-cross $t$-intersecting
families, then \[
\prod_{i=1}^{r} |\mathcal{F}_{i}| \;\leq\; \prod_{i=1}^{r} \genfrac{[}{]}{0pt}{}{n-t}{k_{i}-t}.
\]
The equality holds only if $\mathcal{F}_{i}=\{F\in \genfrac{[}{]}{0pt}{}{V}{k_{i}} \colon T\subseteq F\}$ $(i=1,2,\ldots,r)$ for some $t$-dimensional subspace $T$.
\end{Theorem}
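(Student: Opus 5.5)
The plan is to reduce Theorem~\ref{Cross} to the case $r=2$ and then handle two families by a covering-number argument. Recall that $r$-cross $t$-intersecting means $\dim(F_1\cap\cdots\cap F_r)\ge t$ for all choices $F_i\in\mathcal{F}_i$. In particular, every pair $\mathcal{F}_i,\mathcal{F}_j$ is cross $t$-intersecting. Since $k_1\ge k_2\ge\cdots$, every pair satisfies $n\ge k_i+k_j+t+1$.

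\textbf{Reduction from $r\ge 3$ to $r=2$.} Assume the two-family bound $|\mathcal{F}_i||\mathcal{F}_j|\le \genfrac{[}{]}{0pt}{}{n-t}{k_i-t}\genfrac{[}{]}{0pt}{}{n-t}{k_j-t}$. Multiply it over the cyclic pairs $(1,2),(2,3),\dots,(r-1,r),(r,1)$. Each index occurs exactly twice, so we get $\bigl(\prod_i|\mathcal{F}_i|\bigr)^2\le\prod_i\genfrac{[}{]}{0pt}{}{n-t}{k_i-t}^2$, which is the bound. In the equality case every cyclic pair is extremal, so by the $r=2$ uniqueness each pair is the full star of a common $t$-space. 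Consecutive stars share a family, so the $t$-spaces coincide along the cycle.

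\textbf{Two families.} Let $\mathcal{F}\subseteq\genfrac{[}{]}{0pt}{}{V}{k}$ and $\mathcal{G}\subseteq\genfrac{[}{]}{0pt}{}{V}{k'}$ be cross $t$-intersecting with $n\ge k+k'+t+1$, and assume both are nonempty. Define $\tau(\mathcal{F})$ as the least dimension of a subspace $S$ with $\dim(S\cap F)\ge t$ for all $F\in\mathcal{F}$. Every $G\in\mathcal{G}$ is such a subspace, so $t\le\tau(\mathcal{F})\le k'$; similarly $t\le\tau(\mathcal{G})\le k$.

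The case split is as follows.
\begin{itemize}
\item[(a)] Suppose $\tau(\mathcal{F})=\tau(\mathcal{G})=t$. Then $\mathcal{F}$ lies in the star of a $t$-space $T$ and $\mathcal{G}$ lies in the star of a $t$-space $T'$.
  \begin{itemize}
  \item If $T=T'$, we are done.
  \item If $T\neq T'$, then $\dim(F\cap G)\ge t$ forces every member to meet $T+T'$ in more than $t$ dimensions. This makes both families polynomially (in $q^{-1}$) smaller than the stars, and the product bound follows.
  \end{itemize}
\item[(b)] Suppose, say, $\tau(\mathcal{G})\ge t+1$. Use the standard iterative counting. Fix $F_0\in\mathcal{F}$. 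Every $G$ contains a $t$-subspace of $F_0$, so $\mathcal{G}$ is covered by $\genfrac{[}{]}{0pt}{}{k}{t}$ stars. A $t$-subspace $T\le F_0$ does not cover $\mathcal{G}$, so there is $F_1$ with $\dim(F_1\cap T)<t$. Inside the star of $T$, every member meets $F_1$ in a further dimension, which gives roughly $|\mathcal{G}|\le \genfrac{[}{]}{0pt}{}{k}{t}\genfrac{[}{]}{0pt}{}{k}{1}\genfrac{[}{]}{0pt}{}{n-t-1}{k'-t-1}$. Bound $|\mathcal{F}|$ by the same method, with the extra help of the partner family, or trivially by $\genfrac{[}{]}{0pt}{}{k'}{t}\genfrac{[}{]}{0pt}{}{n-t}{k-t}$.
\end{itemize}

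In case (b), compare the product with the star product using $\genfrac{[}{]}{0pt}{}{n-t-1}{k'-t-1}\big/\genfrac{[}{]}{0pt}{}{n-t}{k'-t}\approx q^{-(n-k')}$ and $\genfrac{[}{]}{0pt}{}{k}{t}\approx q^{t(k-t)}$.

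\textbf{Main obstacle.} The crude estimates need $n$ of order $kt$, not $k+k'+t+1$. So the real difficulty is sharpening case (b) down to the linear threshold. I would first split further on whether $\tau(\mathcal{F})=t$. If $\mathcal{F}$ lies in the star of $T$, then each $G$ must meet every $F\supseteq T$ in dimension $\ge t$. Taking $F=T\oplus W$ with $W$ generic in a complement (possible because $n\ge k+k'+t+1$) forces $T\le G$ unless $\mathcal{F}$ is a small proper substar. This gives a sharp trade-off of the form $|\mathcal{F}|\le\genfrac{[}{]}{0pt}{}{n-t}{k-t}-(\text{many})$, which balances the possible excess of $\mathcal{G}$. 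When both covering numbers exceed $t$, both families are at most about $q^{-(n-k-k'-t)}$ times the stars. Here $n\ge k+k'+t+1$ with $q\ge2$ gives a factor strictly below $1$, with the Gaussian-binomial bounds $q^{a(b-a)}\le\genfrac{[}{]}{0pt}{}{b}{a}<\tfrac{7}{2}q^{a(b-a)}$ doing the bookkeeping. Strictness in every non-star case yields uniqueness of the extremal configuration.
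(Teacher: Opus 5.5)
The paper gives no proof of Theorem~\ref{Cross}: it is quoted from \cite{Cao2023} and used as a black box, so your sketch has to stand on its own.

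\textbf{What is fine.} Your reduction to $r=2$ is correct. If some family is empty the bound is trivial. Otherwise every pair is cross $t$-intersecting with $n\ge k_i+k_j+t+1$, the cyclic product gives the bound, and equality forces each cyclic pair to be a pair of full stars whose centers agree along the cycle. Case (a) is fixable but misstated. When $T\ne T'$ it is not true that both families are small: if every $G$ contains $T+T'$, then $\mathcal{F}$ may be the whole star of $T$. The right dichotomy is that either $\mathcal{G}$ lies in the star of $T+T'$, or some $G_0\in\mathcal{G}$ does not contain $T$, and then $|\mathcal{F}|$ is at most the number of $k$-spaces through $T$ meeting $G_0$ in dimension $\ge t$.

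\textbf{The gap is case (b).} This case is the entire difficulty at the threshold $n\ge k_1+k_2+t+1$, and you do not prove it. The only estimates you derive start from a member $F_0$ and pay $\genfrac{[}{]}{0pt}{}{k}{t}\approx q^{t(k-t)}$, so, as you say, they need $n-k-k'$ of order $t(k-t)$. Your proposed repair rests on the claim that when both covering numbers exceed $t$, each family is at most about $q^{-(n-k-k'-t)}$ times its star. This claim is false.

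Take $k=k'$, $n=2k+t+1$, $\mathcal{F}=\{F_0,F_1\}$ with $D=F_0\cap F_1$ of dimension $t-1$, and let $\mathcal{G}$ be all $k$-spaces meeting both $F_0$ and $F_1$ in dimension at least $t$.
\begin{itemize}
\item Both covering numbers exceed $t$: no $t$-space lies in $D$, and the common intersection of the members of $\mathcal{G}$ is contained in $D$.
\item There are $\genfrac{[}{]}{0pt}{}{k-t+1}{1}^2$ spaces $W=D+\langle u,w\rangle$ with $u\in F_0\setminus D$ and $w\in F_1\setminus D$. For each such $W$, at least $\tfrac34$ of the $k$-spaces $G\supseteq W$ satisfy $G\cap F_0=D+\langle u\rangle$ and $G\cap F_1=D+\langle w\rangle$.
\item These sets are disjoint for different $W$, so $|\mathcal{G}|>\tfrac34 q^{k-3t-2}\genfrac{[}{]}{0pt}{}{n-t}{k-t}$. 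This exceeds the star once $k\ge 3t+3$ and grows exponentially in $k$.
\end{itemize}
So only the product can be controlled, and no argument that bounds the two families separately can succeed. The trade-off you invoke in the subcase $\tau(\mathcal{F})=t$ is also never quantified.

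\textbf{What is missing} is an estimate that couples the two families through both covering numbers $a=\tau(\mathcal{F})$ and $b=\tau(\mathcal{G})$. One way:
\begin{itemize}
\item Cover $\mathcal{G}$ by the stars of the $t$-subspaces of a minimum $t$-cover of $\mathcal{G}$. This cover has dimension $b$, not $\dim F_0=k$.
\item Branch each such star using members of $\mathcal{F}$ up to dimension $a$. This gives $|\mathcal{G}|\le\genfrac{[}{]}{0pt}{}{b}{t}\genfrac{[}{]}{0pt}{}{k}{1}^{a-t}\genfrac{[}{]}{0pt}{}{n-a}{k'-a}$.
\item Symmetrically, $|\mathcal{F}|\le\genfrac{[}{]}{0pt}{}{a}{t}\genfrac{[}{]}{0pt}{}{k'}{1}^{b-t}\genfrac{[}{]}{0pt}{}{n-b}{k-b}$.
\end{itemize}
In the product, the cover cost $\genfrac{[}{]}{0pt}{}{b}{t}\approx q^{t(b-t)}$ is absorbed by the $b-t$ branching steps for $\mathcal{F}$, each worth about $q^{k+k'-1-n}$, and symmetrically for $a$. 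The product is then roughly $q^{(a+b-2t)(t+k+k'-1-n)}$ times the star product. This mechanism can plausibly reach $n\ge k+k'+t+1$, though the constants, particularly for $q=2$, still have to be checked carefully.
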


\begin{Remark}
When $r = 2$, $k_{1}=k_{2}=k\geq t$, and $\mathcal{F}_{1}=\mathcal{F}_{2}=\mathcal{F}$, 
Theorem~\ref{Cross} yields: For $n \geq 2k+t+1$, if $\dim(F\cap F')\geq t$ for any $F,F'\in \mathcal{F}$, then the following bounds hold:
\[
|\mathcal{F}|^2 \leq \genfrac{[}{]}{0pt}{}{n-t}{k-t}^2,   |\mathcal{F}| \leq \genfrac{[}{]}{0pt}{}{n-t}{k-t}.
\]
\end{Remark}

In this paper, we investigate a weakening of the condition in Theorem \ref{Cross}. Let $n$, $k$, $k'$ be positive integers and let $\mathcal{F}\subseteq \genfrac{[}{]}{0pt}{}{V}{k}$ and $\mathcal{G}\subseteq \genfrac{[}{]}{0pt}{}{V}{k'}$. Given positive integers $\ell$ and $t$,  if 
\[
\sum_{1\leq i,j\leq \ell}\dim(F_{i}\cap G_{j})\geq \ell^2 t-\ell+1 
\]
holds for every choice of $\ell$ distinct members $F_{1},\dots ,F_{\ell}\in \mathcal{F}$ and $G_{1},\dots ,G_{\ell}\in \mathcal{G}$, then we say $\mathcal{F}$ and $\mathcal{G}$ are $\ell$-weakly cross $t$-intersecting. Notice that the case $\ell=1$ of $\ell$-weakly cross $t$-intersecting coincides precisely with the usual definition of cross $t$-intersecting.

We prove the following theorem, which is a generalization of Theorems \ref{Weakly} and \ref{Cross}. Our argument is motivated by the approach developed in \cite{ai2026}.

\begin{Theorem}\label{Main}
Let $k$, $k'$, $\ell$ and $t$ be positive integers with $k \geq k' \geq t+1$, $\ell\geq 2$, and let $n\geq (2k-t+1)(t+1)+(k-t+1)k'+k+2\ell-1$. Let $V$ be an $n$-dimensional vector space over $\mathbb{F}_{q}$. Suppose that two families $\mathcal{F}\subseteq \genfrac{[}{]}{0pt}{}{V}{k}$ and $\mathcal{G}\subseteq \genfrac{[}{]}{0pt}{}{V}{k'}$ are $\ell$-weakly cross $t$-intersecting. Then the following inequality holds
\[
|\mathcal{F}|\cdot |\mathcal{G}|\leq \genfrac{[}{]}{0pt}{}{n-t}{k-t}\genfrac{[}{]}{0pt}{}{n-t}{k'-t}.
\]
The equality holds only if $\mathcal{F}=\{F\in \genfrac{[}{]}{0pt}{}{V}{k}\colon T\subseteq F\}$ and $\mathcal{G}=\{G\in \genfrac{[}{]}{0pt}{}{V}{k'}\colon T\subseteq G\}$ for some $t$-dimensional subspace  $T$ of $V$.
\end{Theorem}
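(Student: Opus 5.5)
The idea is to reduce to Theorem~\ref{Cross} (with $r=2$) by showing that the $\ell$-weak condition forces a large cross $t$-intersecting subfamily. All but a few members should behave exactly as in the cross $t$-intersecting case.

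Assume $|\mathcal{F}||\mathcal{G}|\geq \genfrac{[}{]}{0pt}{}{n-t}{k-t}\genfrac{[}{]}{0pt}{}{n-t}{k'-t}$. Call a pair $(F,G)\in\mathcal{F}\times\mathcal{G}$ \emph{bad} if $\dim(F\cap G)\le t-1$. Define the bipartite ``bad graph'' $B$ on $\mathcal{F}\cup\mathcal{G}$.

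The first step is a counting observation. If $F_1,\dots,F_\ell$ and $G_1,\dots,G_\ell$ contain a perfect matching $F_i G_i$ of bad pairs, then
\[
\sum_{i,j}\dim(F_i\cap G_j)\le \ell(t-1)+(\ell^2-\ell)\max\dim.
\]
This alone is not a contradiction, since off-diagonal terms can be large. So I would refine it: if additionally every $F_i\cap G_j$ has dimension at most $t$ (achievable by choosing members in ``general position''), the sum is at most $\ell^2t-\ell$, a contradiction. Hence the key lemma should be:

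\emph{Any $\ell$ pairwise disjoint bad pairs $(F_i,G_i)$ must have some $F_i,G_j$ with $\dim(F_i\cap G_j)\geq t+1$.}

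Next I would use a sunflower/kernel argument in the subspace setting, as in \cite{ai2026}. Consider $t$-subspaces $T$ such that many members contain $T$; this uses a subspace analogue of Lemma~\ref{setsunflower}/Corollary~\ref{Cor1} via $(t+1)$-dimensional covers. The conclusion I want is that every member of $\mathcal{F}$ (resp. $\mathcal{G}$) that is not ``generic'' can be covered by few $(t+1)$-subspaces.

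The numbers of $k$-spaces containing a fixed $(t+1)$-space, and of those meeting a fixed $k'$-space in dimension $\ge t+1$ while containing $T$, are of order $\genfrac{[}{]}{0pt}{}{n-t-1}{k-t-1}$ times factors like $\genfrac{[}{]}{0pt}{}{k'}{1}$. These are small compared with $\genfrac{[}{]}{0pt}{}{n-t}{k-t}$ when $n$ exceeds the stated bound. This is where the terms $(2k-t+1)(t+1)$ and $(k-t+1)k'$ in the hypothesis should enter.

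Then I would do a matching/greedy removal. Take a maximum matching $M$ in $B$.
\begin{itemize}
\item \textbf{Small matching.} If $|M|\le \ell-1$, delete the at most $2\ell-2$ endpoints of $M$. The remainder is cross $t$-intersecting, so Theorem~\ref{Cross} bounds $(|\mathcal{F}|-a)(|\mathcal{G}|-b)$ with $a+b\le 2\ell-2$. I would then bootstrap via the structure: a large cross $t$-intersecting pair must be nearly a star at some $T$ (Hilton--Milner-type stability, or again the covering count). Each vertex of $M$ must fail to contain $T$. Such an $F$ not containing $T$ has few $G\ni T$ with $\dim(F\cap G)\ge t$, yet $F$ must be nearly cross-$t$-intersecting with $\mathcal{G}$ by the choice of maximum matching, a contradiction for large $n$. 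The extra $2\ell-1$ in $n$ absorbs these losses.
\item \textbf{Large matching.} If $|M|\ge\ell$, greedily pick $\ell$ disjoint bad pairs avoiding the $(t+1)$-dimensional overlaps. Each choice forbids at most $\genfrac{[}{]}{0pt}{}{k}{t+1}$-type many subspaces, which is a vanishing fraction. This contradicts the key lemma.
\end{itemize}

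Equality forces every deletion to be empty, and the equality case of Theorem~\ref{Cross} gives the stars.

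The main obstacle is the general-position choice in the large-matching case. One must show that bad pairs can be chosen so that every cross intersection has dimension at most $t$, i.e., controlling $\dim(F_i\cap G_j)\ge t+1$ for $i\ne j$. I would first try counting $k$-spaces meeting a given $k'$-space in dimension $\geq t+1$ relative to $|\mathcal{F}|\ge \genfrac{[}{]}{0pt}{}{n-t}{k-t}$-scale lower bounds. If that fails, I would use a sunflower of $(t+1)$-kernels to replace members.
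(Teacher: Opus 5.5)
Neither branch of your matching dichotomy closes.

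\textbf{Large-matching branch.} The weak condition alone does not exclude $\ell$ disjoint bad pairs. For $\ell=2$ and $k=k'=t+1$, take $\mathcal{F}=\mathcal{G}=\{A,B\}$ with $\dim(A\cap B)=t-1$. The bad pairs $(A,B),(B,A)$ form a matching of size $2$, yet the grid sum is $2(t+1)+2(t-1)=4t\ge 4t-1$. So any contradiction here must come from the size hypothesis: you would have to show that some matched members have so many bad partners that a choice with all cross dimensions $\le t$ exists. Your count (``each choice forbids $\genfrac{[}{]}{0pt}{}{k}{t+1}$-type many subspaces, a vanishing fraction'') has no denominator. You may only choose among bad pairs, which can be completely rigid. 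Also, the hypothesis bounds only the product, so there is no individual lower bound on $|\mathcal{F}|$ of order $\genfrac{[}{]}{0pt}{}{n-t}{k-t}$ to count against. In the paper the decisive regime is the opposite one, with $|\mathcal{F}|$ of order $\genfrac{[}{]}{0pt}{}{n-t-1}{k-t-1}$.

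\textbf{Small-matching branch.} Theorem~\ref{Cross} applied after deleting $V(M)$ gives only $(|\mathcal{F}|-|M|)(|\mathcal{G}|-|M|)\le\genfrac{[}{]}{0pt}{}{n-t}{k-t}\genfrac{[}{]}{0pt}{}{n-t}{k'-t}$. Upgrading this to the sharp bound needs a Hilton--Milner-type stability theorem for cross $t$-intersecting subspace families in the stated range of $n$, which you neither prove nor cite. Moreover, the assertion that a matched $F$ is ``nearly cross $t$-intersecting with $\mathcal{G}$ by the choice of maximum matching'' is false. Maximality only rules out augmenting paths, so one endpoint of an edge of $M$ may be bad with almost all of the other family.

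\textbf{What the paper does instead.} The sunflower step you only gesture at is what the paper makes precise, and it replaces both branches. Lemma~\ref{sunflower} says: if $\mathcal{F}$ contains a sunflower with $t$-dimensional kernel $K$ and $(\genfrac{[}{]}{0pt}{}{k'}{1}+1)\ell$ members, then every $G\in\mathcal{G}$ contains $K$. (Discard the at most $\genfrac{[}{]}{0pt}{}{k'}{1}$ members meeting each chosen $G_i$ outside $K$; a $G\not\supseteq K$ then produces a grid with sum at most $\ell^2t-\ell$.) If both families contain such sunflowers, the bound follows, and at equality the two kernels must coincide. If $\mathcal{F}$ has none (Lemma~\ref{Nsunflower}), the paper proceeds as follows.
\begin{enumerate}
\item Fix $\ell$ members of $\mathcal{G}$ according to whether $\mathcal{G}$ has $\ell$ members pairwise meeting in dimension $\le t-1$, is $t$-intersecting, or neither.
\item Sort $\mathcal{F}$ into $\mathcal{E},\mathcal{E}_1,\mathcal{E}_2,\mathcal{E}_3$ by its intersection pattern with these members, and show $|\mathcal{F}|$ is far below $\genfrac{[}{]}{0pt}{}{n-t}{k-t}$. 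The no-sunflower hypothesis controls the members whose $t$-dimensional meets all equal one $T$.
\item Either Theorem~\ref{Cross} applied to the $t$-intersecting $\mathcal{G}$ makes the product too small, or the product hypothesis (Lemma~\ref{Upperbound2}) forces $|\mathcal{G}|$ to exceed $\ell$ times the number of $k'$-spaces meeting a fixed $k$-space in dimension $\ge t$.
\item In the latter case, any $\ell$ members of $\mathcal{F}$ admit $\ell$ members of $\mathcal{G}$ bad with all of them. This is an entirely bad grid, so no general-position issue arises at all.
\end{enumerate}

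\textbf{A tacit hypothesis.} Both your outline and the paper need $|\mathcal{F}|,|\mathcal{G}|\ge\ell$. Otherwise the condition is vacuous: for example, $k=k'=3$, $t=2$, $\mathcal{F}=\genfrac{[}{]}{0pt}{}{V}{3}$, $|\mathcal{G}|=1$ violates the bound. So no small-matching argument can reach a contradiction in that case.
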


The rest of the paper is organized as follows. In Section 2, we prove several necessary inequalities. In Sections 3 and 4, we prove Theorem~\ref{Set} and Theorem~\ref{Main}, respectively. In Section 5, we conclude the paper. 
\section{Some inequalities}
For the sake of brevity in the subsequent proof, we first need to prove several inequalities.

The following lemma can be easily proved.
\begin{Lemma}\label{BinomialProperty}
	Let $m$ and $i$ be non-negative integers with $i \leq m$. Then the following hold.
	\begin{itemize}
		\item [{\rm (i)}]  ${m\choose i}={m\choose m-i}$ and ${m\choose 0}<{m\choose 1}<\cdots <{m\choose \lfloor \frac{m}{2}\rfloor}$;
		\item [{\rm (ii)}] ${m+1\choose i+1}> {m\choose i}$ if $m>i$.
	\end{itemize}
\end{Lemma}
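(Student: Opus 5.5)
The statement is Lemma~\ref{BinomialProperty}. Both parts follow from elementary identities for binomial coefficients, so the plan is a direct verification.

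For (i), I first obtain the symmetry ${m\choose i}={m\choose m-i}$ from the factorial formula ${m\choose i}=\frac{m!}{i!(m-i)!}$, whose right-hand side is unchanged under $i\mapsto m-i$. Alternatively, complementation gives a bijection between the $i$-subsets and the $(m-i)$-subsets of an $m$-set. For the strict increase, I will compare consecutive terms through the ratio
\[
\frac{{m\choose i+1}}{{m\choose i}}=\frac{m-i}{i+1},
\]
valid for $0\le i\le m-1$. This ratio exceeds $1$ exactly when $m-i>i+1$, i.e. $i<\frac{m-1}{2}$. The chain runs from ${m\choose 0}$ up to ${m\choose \lfloor m/2\rfloor}$, so the comparisons needed are those with $i+1\le\lfloor m/2\rfloor$. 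Then $i\le \lfloor m/2\rfloor-1$, hence $2i+1\le 2\lfloor m/2\rfloor-1\le m-1<m$, so $m-i>i+1$ and each step is strict. The only care needed is this parity check at the top index. When $m\le 1$ the chain consists of a single term and there is nothing to prove.

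For (ii), I will use Pascal's rule ${m+1\choose i+1}={m\choose i}+{m\choose i+1}$. Since $m>i$, we have $0\le i+1\le m$, so ${m\choose i+1}\ge 1>0$, which gives the strict inequality ${m+1\choose i+1}>{m\choose i}$. A second route is the ratio ${m+1\choose i+1}/{m\choose i}=\frac{m+1}{i+1}>1$, again because $m>i$.

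No step is a real obstacle. The only point requiring attention is the strictness at the top of the chain in (i), which is the parity check above.
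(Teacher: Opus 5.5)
Your proposal is correct: the ratio $\frac{m-i}{i+1}$ with the parity check $2i+1\le 2\lfloor m/2\rfloor-1<m$ gives the strict chain in (i), and Pascal's rule with ${m\choose i+1}\ge 1$ gives (ii). The paper offers no proof of this lemma beyond saying it ``can be easily proved,'' and your argument is the standard verification it leaves to the reader.
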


\begin{Lemma}\label{setmono}
	Let $n$, $k$, $k'$, $t$ and $h$ be positive integers satisfying $n\geq k^{2}+2k$, $k \geq k' \geq t+1$,  and $t \leq h<k'$. Then the function
	$f(h) := \binom{k}{h}\binom{n-k}{k'-h}$ is strictly decreasing in $h$.
\end{Lemma}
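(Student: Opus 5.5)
We compare consecutive values through their ratio. For $t\le h<h+1\le k'$ (so both terms are defined and positive; note $h+1\le k'\le k$), write
\[
\frac{f(h+1)}{f(h)}=\frac{\binom{k}{h+1}}{\binom{k}{h}}\cdot\frac{\binom{n-k}{k'-h-1}}{\binom{n-k}{k'-h}}
=\frac{k-h}{h+1}\cdot\frac{k'-h}{n-k-k'+h+1}.
\]
It then suffices to show that this ratio is strictly less than $1$ for every $h$ with $t\le h\le k'-1$. This gives $f(t)>f(t+1)>\cdots>f(k')$, which is the claimed strict decrease on the range of $h$ under consideration.

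For the numerator, since $h\ge t\ge 1$ we have $k-h\le k-1$, $h+1\ge 2$ and $k'-h\le k'-1\le k-1$. Hence
\[
(k-h)(k'-h)\le (k-1)^2<k^2 .
\]
For the denominator, using $h\ge 1$, $k'\le k$ and the hypothesis $n\ge k^2+2k$, we get
\[
(h+1)(n-k-k'+h+1)\ge 2(n-2k+2)\ge 2(k^2+2)>k^2 .
\]
Combining the two bounds, the numerator is strictly smaller than the denominator, so $f(h+1)/f(h)<1$.

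All factors are positive: $n-k\ge k'$ ensures $\binom{n-k}{k'-h}>0$, and $k-h>0$. Hence the ratio manipulation is legitimate. There is no real obstacle here; the only care needed is to check that $n\ge k^2+2k$ dominates the product $(k-h)(k'-h)$ uniformly in $h$. The crude bounds above already give a wide margin.
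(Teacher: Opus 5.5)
Your proof is correct and is essentially the paper's argument: the paper computes the reciprocal ratio $f(h)/f(h+1)=\frac{h+1}{k-h}\cdot\frac{n-k-k'+h+1}{k'-h}$. It then uses $h+1\ge 2$ together with $n\ge k^2+2k$ to get that this ratio exceeds $2$, which amounts to the same comparison of $(k-h)(k'-h)$ against $n-k-k'+h+1$ that you make.
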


\begin{proof}
	Consider the ratio of two consecutive terms

$$R:= \frac{f(h)}{f(h+1)}= \frac{\binom{k}{h} \binom{n-k}{k'-h}}{\binom{k}{h+1}\binom{n-k}{k'-h-1}}.$$
	Using the identities $\binom{k}{h+1} = \frac{k-h}{h+1} \binom{k}{h}$ and $\binom{n-k}{k'-h} =\frac{n-k-k'+h+1}{k'-h}\binom{n-k}{k'-h-1}$, we obtain
	
$$R =\frac{h+1}{k-h}\cdot \frac{n-k-k'+h+1}{k'-h}.$$

	Since $k\geq k'\geq h+1 \geq t+1 \geq 2$ and $n \geq k^{2}+2k$, the ratio $R$ satisfies $R\geq 2\cdot \frac{n-k-k'+h+1}{(k-h)(k'-h)}> 2$. Thus, $f(h) > f(h+1)$ for every $h$ in the range $t \leq h \leq k'-1$, which implies that $f(h)$ is strictly decreasing in $h$. This completes the proof. \qed
\end{proof}

\begin{Lemma}\label{setupperbound}
	Let $n, k, k', \ell, t$ and $m$ be non-negative integers satisfying $k \geq k' \geq t+1$, $\ell \geq 2$, $n\geq \frac{k^{2}\ell^{3}}{2}\binom{2k}{t+1}\binom{k}{t}+t$, and $0 \leq m \leq k'-t-1$. Then the following inequalities hold
	
	\[
	\begin{aligned}
		&2\ell-2+(\ell+k'\ell-1)\binom{\ell}{2}\binom{k'-1}{t}\left( \binom{n-t}{k-t}-\binom{n-k}{k-t} + 1 \right)\\
		&\qquad + \left(\binom{\ell}{2}\binom{2k'}{t+1}+\ell \binom{k'}{t+1}\right)\binom{n-t-1}{k-t-1}< \frac{\binom{n-t}{k'-t}\binom{n-t}{k-t}}{\binom{n-t-m}{k'-t-m}},
	\end{aligned}
	\]
	
	\[
	\begin{aligned}
		&2\ell-2+(\ell+k\ell-1)\binom{\ell}{2}\binom{k-1}{t}\left( \binom{n-t}{k'-t}-\binom{n-k'}{k'-t} + 1 \right) \\
		&\qquad + \left(\binom{\ell}{2}\binom{2k}{t+1}+\ell \binom{k}{t+1}\right)\binom{n-t-1}{k'-t-1}< \frac{\binom{n-t}{k'-t}\binom{n-t}{k-t}}{\binom{n-t-m}{k-t-m}}.
	\end{aligned}
	\]
\end{Lemma}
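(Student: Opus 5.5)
The plan is to reduce both inequalities to a comparison of polynomial growth in $n$, using the worst case $m=k'-t-1$ (resp. $m=k-t-1$) on the right-hand side. First I would show that the right-hand side is decreasing in $m$. Since $\binom{n-t-m}{k'-t-m}/\binom{n-t-m-1}{k'-t-m-1}=\frac{n-t-m}{k'-t-m}>1$, the denominator $\binom{n-t-m}{k'-t-m}$ decreases as $m$ grows, so the right-hand side increases with $m$. Hence the worst case is actually $m=0$. There the right-hand side of the first inequality equals $\binom{n-t}{k-t}$, and that of the second equals $\binom{n-t}{k'-t}$. It therefore suffices to prove
\[
2\ell-2+A\Bigl(\binom{n-t}{k-t}-\binom{n-k}{k-t}+1\Bigr)+B\binom{n-t-1}{k-t-1}<\binom{n-t}{k-t},
\]
where $A=(\ell+k'\ell-1)\binom{\ell}{2}\binom{k'-1}{t}$ and $B=\binom{\ell}{2}\binom{2k'}{t+1}+\ell\binom{k'}{t+1}$. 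The second inequality has the same form with $k,k'$ swapped, where $A,B$ use $k$ and the binomials are $\binom{n-t}{k'-t}$ and $\binom{n-k'}{k'-t}$.

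Second, I would bound the difference term. Counting $(k-t)$-subsets of an $(n-t)$-set that meet a fixed $(k-t)$-subset gives
\[
\binom{n-t}{k-t}-\binom{n-k}{k-t}\le (k-t)\binom{n-t-1}{k-t-1}.
\]
The constant terms $2\ell-2+A$ can also be absorbed into a multiple of $\binom{n-t-1}{k-t-1}$ whenever $k-t\ge1$, since $\binom{n-t-1}{k-t-1}\ge1$. In the degenerate case $k-t-1=0$ this needs a small separate check, which is easy since $n$ is huge. The left-hand side is then at most $C\binom{n-t-1}{k-t-1}$ with
\[
C=2\ell-2+A(k-t+1)+B.
\]

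Third, I would use $\binom{n-t}{k-t}=\frac{n-t}{k-t}\binom{n-t-1}{k-t-1}$. This reduces everything to showing $n-t>(k-t)C$. I would then bound $C$ crudely by $\frac{k\ell^{3}}{2}\binom{2k}{t+1}\binom{k}{t}$, using $k'\le k$, $\binom{k'-1}{t}\le\binom{k}{t}$, $\binom{k'}{t+1}\le\binom{2k}{t+1}$, $\binom{\ell}{2}\le \ell^2/2$ and $\ell+k'\ell-1\le (k+1)\ell$. The hypothesis $n\ge \frac{k^2\ell^3}{2}\binom{2k}{t+1}\binom{k}{t}+t$ should then dominate $(k-t)C$.

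The main obstacle is this last bookkeeping of constants. The product $A(k-t+1)$ is roughly $k\ell\cdot\frac{\ell^2}{2}\binom{k}{t}\cdot k$, which already uses the full $k^2\ell^3/2$ factor before the extra $\binom{2k}{t+1}$ is included. Multiplying by $k-t$ might therefore push the needed bound slightly beyond the stated hypothesis. If so, I would sharpen the estimates rather than use crude ones: $\ell+k'\ell-1<(k+1)\ell$ together with $\binom{2k}{t+1}\ge 2k\ge k+t+1$ gives slack, and $\binom{\ell}{2}=\ell(\ell-1)/2$ saves a factor. Combining these should make the strict inequality hold.
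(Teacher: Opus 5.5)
Your proposal is correct and is essentially the paper's argument. Both reduce the right-hand side to $\binom{n-t}{k-t}$ and bound the left-hand side by a multiple of $\binom{n-t-1}{k-t-1}$ via $\binom{n-t}{k-t}-\binom{n-k}{k-t}\le (k-t)\binom{n-t-1}{k-t-1}$, then finish with $\binom{n-t}{k-t}=\frac{n-t}{k-t}\binom{n-t-1}{k-t-1}$. The paper obtains the difference bound from a factorial expansion and Bernoulli's inequality rather than your union-bound count, and it splits off the case $k=t+1$, which your uniform treatment does not need.

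The constant bookkeeping you hedged on does close with the slack you identified. Since $(k-t)(k-t+1)(\ell+k'\ell-1)<(k^{3}-k)\ell\le\frac{k^{2}\ell}{2}\binom{2k}{t+1}$ (using $\binom{2k}{t+1}\ge 2k$), the $A$-contribution is below half of $\frac{k^{2}\ell^{3}}{2}\binom{2k}{t+1}\binom{k}{t}$, and $(k-t)(2\ell-2+B)$ is far smaller. The paper gets the factor $k-t$ in place of your $k-t+1$ by absorbing $2\ell-2+A$ into a single copy of $\binom{n-t-1}{k-t-1}$, and then uses $(k-t)(k'+1)\le k^{2}-1$ and $k-t\le\binom{2k}{t+1}$.
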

\begin{proof}
We only prove the first inequality, the second follows from an analogous argument.
The proof is divided into two cases.
	
	\noindent\textbf{Case 1: $k = t+1$.}  
	Under this assumption, the left-hand side of the inequality is simplified to  
     \[
  \text{LHS}=3\ell-2+2(\ell+k'\ell-1)\binom{\ell}{2}+\binom{\ell}{2} \binom{2k'}{t+1},
     \]
while the right-hand side becomes 
\[
 \text{RHS}=\frac{\binom{n-t}{k'-t}\binom{n-t}{k-t}}{\binom{n-t-m}{k'-t-m}}= \binom{n-t}{k-t}=n-t.
\]
Since $k=t+1\geq 2$ and $\ell \geq 2$, we have $3\ell-2< (k+1)\ell^{2}$ and $\frac{(k^{2}-1)}{2}\binom{k}{t}= \frac{(k^{2}-1)}{2}(t+1)\geq k+1$. According to $n\geq \frac{k^{2}\ell^{3}}{2}\binom{2k}{t+1}\binom{k}{t}+t$, it follows that
	\[
	\begin{aligned}
		\text{LHS} 
		&< (k+1)\ell^{2}+(k+1)\ell(\ell^{2}-\ell)+\binom{\ell}{2} \binom{2k}{t+1} \\
		&< (k+1)\ell^{3}+\frac{\ell^{2}}{2}\binom{2k}{t+1}\\
		&\leq \frac{(k^{2}-1)\ell^{3}}{2}\binom{2k}{t+1}\binom{k}{t}+\frac{\ell^{3}}{2}\binom{2k}{t+1}\binom{k}{t}\\
		&\leq \frac{k^{2}\ell^{3}}{2}\binom{2k}{t+1}\binom{k}{t}\leq n-t=\text{RHS}.
	\end{aligned}
	\]
	
	\noindent\textbf{Case 2: $k>t+1$.}  
	Since $n\geq \frac{k^{2}\ell^{3}}{2}\binom{2k}{t+1}\binom{k}{t}+t$, $\ell\geq 2$ and $k-t-1\geq 1$, by Lemma \ref{BinomialProperty} we have  $\binom{n-t-1}{k-t-1}\geq \binom{n-t-1}{1}>2\ell-2+(k'+1)\ell\binom{\ell}{2}\binom{k'-1}{t}$ and $\frac{\ell}{2}\binom{2k'}{t+1}\geq \ell\binom{k'}{t+1}+1$. It follows that
	\[
	\begin{aligned}
		\text{LHS} 
		&< (k'+1)\ell\binom{\ell}{2}\binom{k'-1}{t}\left(\binom{n-t}{k-t}-\binom{n-k}{k-t}\right)+ \left(\binom{\ell}{2}\binom{2k'}{t+1}+\ell \binom{k'}{t+1}+1\right)\binom{n-t-1}{k-t-1}\\
		&\leq \frac{(k'+1)\ell^{3}}{2(k-t)!}\binom{k'-1}{t}\frac{\frac{(n-t)!}{(n-k)!}-\frac{(n-k)!}{(n-2k+t)!}}{\binom{n-t-1}{k-t-1}}\binom{n-t-1}{k-t-1}+ \left(\binom{\ell}{2}\binom{2k'}{t+1}+\frac{\ell}{2}\binom{2k'}{t+1}\right)\binom{n-t-1}{k-t-1}\\
		&\leq \frac{(k'+1)\ell^{3}}{2(k-t)}\binom{k'-1}{t}\frac{\frac{(n-t)!}{(n-k)!}-\frac{(n-k)!}{(n-2k+t)!}}{\frac{(n-t-1)!}{(n-k)!}}\binom{n-t-1}{k-t-1}+ \frac{\ell^{2}}{2}\binom{2k'}{t+1}\binom{n-t-1}{k-t-1}\\
		&\leq \Bigg[\frac{(k'+1)\ell^{3}}{2(k-t)}\binom{k'-1}{t}\left(n-t-(n-k)\prod_{i=1}^{k-t-1}(1-\frac{k-t}{n-k+i})\right)+ \frac{\ell^{2}}{2}\binom{2k}{t+1}\Bigg] \binom{n-t-1}{k-t-1}\\
\end{aligned}
	\]
Since $-\frac{k-t}{n-k+1}>-1$, by Bernoulli inequality $(1+p)^{m}\geq 1+mp$ we obtain 
\[
(n-k)\prod_{i=1}^{k-t-1}(1-\frac{k-t}{n-k+i})\geq (1-\frac{k-t}{n-k+1})^{k-t-1}(n-k)\geq (1-\frac{(k-t-1)(k-t)}{n-k+1})(n-k),
\]
and
	\[
	\begin{aligned}
		\text{LHS} 
		&< \frac{(k'+1)\ell^{3}}{2(k-t)}\binom{k'-1}{t}\left((n-t)-(1-\frac{(k-t-1)(k-t)}{n-k+1})(n-k)\right)\binom{n-t-1}{k-t-1}\\
		&\qquad+\frac{\ell^{2}}{2}\binom{2k}{t+1}\binom{n-t-1}{k-t-1}\\
		&< \Bigg[\frac{(k-t)(k'+1)\ell^{3}}{2}\binom{k'-1}{t}+\frac{\ell^{2}}{2}\binom{2k}{t+1}\Bigg]\cdot \frac{k-t}{n-t}\binom{n-t}{k-t}.
\end{aligned}
	\]
Since $k\geq k'\geq t+1$, $k>t+1\geq 2$ and $\ell \geq 2$, we have $(k-t)(k'+1)\leq k^2-1$ and $k-t<k\leq \min\{\binom{k}{t},\binom{2k}{t+1}\}$ by Lemma \ref{BinomialProperty}. According to $n\geq \frac{k^{2}\ell^{3}}{2}\binom{2k}{t+1}\binom{k}{t}+t$, it follows from Lemma \ref{BinomialProperty} that	
\[
	\begin{aligned}
		\text{LHS} 
		&< \frac{\Bigg[ \frac{(k^2-1)\ell^3}{2}\binom{2k}{t+1}\binom{k}{t}+\frac{\ell^3}{2}\binom{2k}{t+1}\binom{k}{t}\Bigg]}{n-t}\binom{n-t}{k-t}\\
		&= \frac{\frac{k^{2}\ell^{3}}{2}\binom{2k}{t+1}\binom{k}{t}}{n-t}\binom{n-t}{k-t}\leq \binom{n-t}{k-t} \leq \text{RHS}. \\
	\end{aligned}
	\]
	This completes the proof of the first inequality.      \qed  
\end{proof}

\begin{Lemma}\label{setupperbound2}
	Let $n, k, k', \ell$ and $t$ be positive integers satisfying $n\geq \frac{k^{2}\ell^{4}}{2}\binom{2k}{t+1}\binom{k}{t}+t$, $k \geq k' \geq t+1$, and $\ell \geq 2$. Then the following inequalities hold
	\[
	\frac{\binom{n-t}{k-t}\binom{n-t}{k'-t}}{\left(\binom{\ell}{2}\binom{k'-1}{t}+2\right)(\ell-1)+\left(\binom{\ell}{2}\binom{2k'}{t+1}+\ell\binom{k'}{t+1}\right)\binom{n-t-1}{k-t-1}}\geq \ell\sum_{h=t}^{k'} \binom{k}{h}\binom{n-k}{k'-h}+\ell,
	\]
	\[
	\frac{\binom{n-t}{k-t}\binom{n-t}{k'-t}}{\left(\binom{\ell}{2}\binom{k-1}{t}+2\right)(\ell-1)+\left(\binom{\ell}{2}\binom{2k}{t+1}+\ell\binom{k}{t+1}\right)\binom{n-t-1}{k'-t-1}}\geq \ell\sum_{h=t}^{k'}\binom{k'}{h}\binom{n-k'}{k-h}+\ell.
	\]
\end{Lemma}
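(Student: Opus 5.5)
Only the first inequality needs an argument; the second follows by the symmetric substitution of $k$ for $k'$ in the role of the sunflower-type coefficients. The plan is to rewrite the inequality as an upper bound on a product, $D\cdot S\le \binom{n-t}{k-t}\binom{n-t}{k'-t}$, where
\[
D:=\left(\tbinom{\ell}{2}\tbinom{k'-1}{t}+2\right)(\ell-1)+\left(\tbinom{\ell}{2}\tbinom{2k'}{t+1}+\ell\tbinom{k'}{t+1}\right)\tbinom{n-t-1}{k-t-1}
\]
and $S:=\ell\sum_{h=t}^{k'}\binom{k}{h}\binom{n-k}{k'-h}+\ell$. We then bound $D$ and $S$ separately, each by an explicit multiple of $\binom{n-t}{k-t}$ and $\binom{n-t}{k'-t}$ respectively, with the factors $\frac{k-t}{n-t}$ and $\binom{k}{t}$ appearing.

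\textbf{Bounding $S$.} Since $n\ge k^2+2k$ (our bound on $n$ is far larger), Lemma~\ref{setmono} shows that the terms $f(h)=\binom{k}{h}\binom{n-k}{k'-h}$ decrease in $h$, and the proof of that lemma actually gives $f(h)/f(h+1)>2$. The sum is therefore dominated by a geometric series:
\[
\sum_{h=t}^{k'}f(h)\le 2f(t)\le 2\tbinom{k}{t}\tbinom{n-k}{k'-t}\le 2\tbinom{k}{t}\tbinom{n-t}{k'-t}.
\]
(The endpoint $h=k'$ is covered by the same ratio computation, with the ratio at $h=k'-1$ also $>2$.) Adding the lone $\ell$, we get $S\le 3\ell\binom{k}{t}\binom{n-t}{k'-t}$. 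A factor of $2$ or $3$ is harmless here.

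\textbf{Bounding $D$.} As in Case~2 of Lemma~\ref{setupperbound}, we use $\binom{n-t-1}{k-t-1}=\frac{k-t}{n-t}\binom{n-t}{k-t}$ and $\ell\binom{k'}{t+1}\le\frac{\ell}{2}\binom{2k'}{t+1}$, which give
\[
\left(\tbinom{\ell}{2}+\tfrac{\ell}{2}\right)\tbinom{2k'}{t+1}\le \tfrac{\ell^2}{2}\tbinom{2k}{t+1}.
\]
The constant term $(\binom{\ell}{2}\binom{k'-1}{t}+2)(\ell-1)$ is absorbed because $\binom{n-t-1}{k-t-1}\ge 1$ and it is small compared with the leading term. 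This is done by adding a small amount to the coefficient, as Lemma~\ref{setupperbound} does with its extra ``$+1$''. If $k=t+1$, then $\binom{n-t-1}{0}=1$ and $D$ is simply a constant; that case is handled directly as Case~1 of Lemma~\ref{setupperbound}, comparing against $\binom{n-t}{1}=n-t$. In general we aim for
\[
D\le \frac{c\,k\,\ell^{2}\binom{2k}{t+1}}{n-t}\binom{n-t}{k-t}
\]
with a modest constant $c$.

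\textbf{Combining and the main obstacle.} Multiplying the two bounds gives
\[
D\cdot S\le \frac{3c\,k\,\ell^{3}\binom{2k}{t+1}\binom{k}{t}}{n-t}\binom{n-t}{k-t}\binom{n-t}{k'-t}.
\]
The hypothesis $n-t\ge \frac{k^2\ell^4}{2}\binom{2k}{t+1}\binom{k}{t}$ then finishes the argument, since the spare factor $k\ell/2\ge 2$ (using $k\ge2$, $\ell\ge2$) leaves room to absorb $3c$. The main obstacle is bookkeeping the constants so that $3c\le k\ell/2$ holds even in the smallest case $k=2$, $\ell=2$. There the slack is only a factor of $2$, so the bound on $S$ must be tightened, either by using the ratio $>2$ more carefully or by bounding $\ell$ against $\ell\binom{k}{t}\binom{n-t}{k'-t}$ more sharply. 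Similarly, the constant term in $D$ should be absorbed into $\binom{\ell}{2}\binom{2k'}{t+1}$ rather than producing a new term. If this is still too tight, I would use the inequality $k-t\le k$ only in the final step, keeping the factor $k-t$ (which equals $1$ when $k=t+1$ and whose small values are exactly the delicate cases) to gain the needed slack.
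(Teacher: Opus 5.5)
Your plan is correct and is essentially the paper's proof. It uses the same split into $k=t+1$ (where $D$ is a constant) and $k>t+1$ (where the constant term and $\ell\binom{k'}{t+1}$ are absorbed to give $D\le \frac{\ell^{2}}{2}\binom{2k'}{t+1}\frac{k-t}{n-t}\binom{n-t}{k-t}$), with Lemma~\ref{setmono} controlling the sum; your geometric bound $\sum_h f(h)<2f(t)$ is only a mild sharpening of the paper's $(k'-t+1)f(t)$. The constant obstacle you anticipate does not occur: for $k>t+1$ you get $c=\frac{k-t}{2k}<\frac12$ while $k\ell/2\ge 3$, and for $k=t+1$ one has $D<\ell\binom{\ell}{2}\binom{2k}{k}$, so $D\cdot S<\frac{3\ell^{4}}{2}\binom{2k}{k}\binom{k}{t}(n-t)\le (n-t)^{2}$ by the $\ell^{4}$ in the hypothesis.
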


\begin{proof}
We only prove the first inequality, the second follows from an analogous argument.
The proof is divided into two cases.

	\noindent\textbf{Case 1: $k = t+1$.}  
	Under this assumption, the left-hand side of the inequality can be simplified to
	\[
	\text{LHS} = \frac{\binom{n-t}{1}\binom{n-t}{1}}{(\binom{\ell}{2}+2)(\ell-1)+\ell+\binom{\ell}{2}\binom{2t+2}{t+1}},
	\]
	while the right-hand side becomes
	\[
	\text{RHS} = \ell(t+1)(n-t-1) + 2\ell.
	\]
Since $t\geq 1$ and $\ell\geq 2$, we have $(\ell-1)\binom{\ell}{2}\binom{2t+2}{t+1}\geq 6\cdot (\ell-1)\binom{\ell}{2}>(\binom{\ell}{2}+2)(\ell-1)+\ell$. According to $n\geq \frac{k^{2}\ell^{4}}{2}\binom{2k}{t+1}\binom{k}{t}+t$, it follows that 
	\[
	\begin{aligned}
		\text{LHS} 
		&=\frac{(n-t)^{2}}{(\binom{\ell}{2}+2)(\ell-1)+\ell+\binom{\ell}{2}\binom{2t+2}{t+1}}>  \frac{(n-t)^{2}}{\ell\binom{\ell}{2}\binom{2t+2}{t+1}}\\
		&\geq \frac{(n-t)\frac{k^{2}\ell^{4}}{2}\binom{2t+2}{t+1}\binom{t+1}{t}}{\ell\frac{\ell(\ell-1)}{2}\binom{2t+2}{t+1}}\geq \frac{(n-t)k^{2}\ell^{4}\binom{2t+2}{t+1}\binom{t+1}{t}}{\ell^{3}\binom{2t+2}{t+1}}\\
		&=  k^{2}\ell(t+1)(n-t)> 2\ell(t+1)(n-t)\\
		&\geq \text{RHS}.
	\end{aligned}
	\]
	
	\noindent\textbf{Case 2: $k > t+1$.}  Since $n\geq \frac{k^{2}\ell^{3}}{2}\binom{2k}{t+1}\binom{k}{t}+t$, $\ell\geq 2$ and $k-t-1\geq 1$, by Lemma \ref{BinomialProperty} we have  $\binom{n-t-1}{k-t-1}\geq \binom{n-t-1}{1}=n-t-1>\left(\binom{\ell}{2}\binom{k'-1}{t}+2\right)(\ell-1)$ and $\frac{\ell}{2}\binom{2k'}{t+1}\geq \ell\binom{k'}{t+1}+1$. It follows that
	\[
	\begin{aligned}
		\text{LHS} 
		&\geq  \frac{\binom{n-t}{k-t}\binom{n-t}{k'-t}}{\left(\binom{\ell}{2}\binom{2k'}{t+1}+\ell\binom{k'}{t+1}+1\right)\binom{n-t-1}{k-t-1}}\geq    \frac{\binom{n-t}{k'-t}(n-t)}{\left(\binom{\ell}{2}\binom{2k'}{t+1}+\frac{\ell}{2}\binom{2k'}{t+1}\right)(k-t)} \\
		&\geq \frac{n-t}{\frac{\ell^{2}}{2}\binom{2k'}{t+1}(k-t)} \binom{n-t}{k'-t}\geq \frac{\frac{\ell^{3}}{2}\binom{2k'}{t+1}k^{2}\binom{k}{t}}{\frac{\ell^{2}}{2}\binom{2k'}{t+1}(k-t)} \binom{n-t}{k'-t}\\
		&= \frac{k^{2}\ell}{k-t}\binom{k}{t}\binom{n-t}{k'-t}\geq  (k'-t+1)\ell\binom{k}{t}\binom{n-k}{k'-t}+\ell.
	\end{aligned}
	\]
	By Lemma \ref{setmono}, we obtain $(k'-t+1)\ell\binom{k}{t}\binom{n-k}{k'-t}+\ell\geq \text{RHS}$, which completes the proof of the first inequality.
\qed
\end{proof}

\begin{Lemma}[\cite{GM2016}]\label{Spacenumber} Let $W$ be a subspace of dimension $k$ in a vector space of dimension $n$ over $\mathbb{F}_{q}$. The number of $m$-dimensional subspaces whose intersection with $W$ has dimension $h$ is 
	\[
	q^{(k-h)(m-h)} \genfrac{[}{]}{0pt}{}{k}{h}\genfrac{[}{]}{0pt}{}{n-k}{m-h}.
	\]
\end{Lemma}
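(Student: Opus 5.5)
This is a standard counting result. I would prove it by a bijective double count of ordered bases, following the approach of \cite{GM2016}. Throughout, let $U$ be the ambient $n$-dimensional space and $W\le U$ a fixed $k$-dimensional subspace.

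\textbf{Step 1: count pairs $(I,X)$.} I count pairs $(I,X)$ where $I$ is an $h$-dimensional subspace of $W$ and $X$ is an $m$-dimensional subspace with $X\cap W=I$.

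First fix $I$. Passing to the quotient $U/I$, which has dimension $n-h$, the subspaces $X\supseteq I$ with $X\cap W=I$ correspond bijectively to $(m-h)$-dimensional subspaces $Y=X/I$ of $U/I$ with $Y\cap (W/I)=0$. Here $W/I$ has dimension $k-h$.

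\textbf{Step 2: count subspaces meeting a fixed subspace trivially.} In a space of dimension $N=n-h$, let $S$ be a fixed subspace of dimension $s=k-h$. I count $r$-dimensional subspaces $Y$ with $Y\cap S=0$, where $r=m-h$.

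Count ordered bases $(y_1,\dots,y_r)$ with $\langle y_1,\dots,y_i\rangle\cap S=0$ for every $i$. The vector $y_{i+1}$ must avoid the subspace $S+\langle y_1,\dots,y_i\rangle$, which has dimension $s+i$. So there are $q^N-q^{s+i}$ choices for it, and the number of such ordered bases is $\prod_{i=0}^{r-1}(q^N-q^{s+i})$.

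Each $Y$ has $\prod_{i=0}^{r-1}(q^r-q^i)$ ordered bases. Dividing and simplifying,
\[
\frac{\prod_{i=0}^{r-1}q^{s+i}(q^{N-s-i}-1)}{\prod_{i=0}^{r-1}q^{i}(q^{r-i}-1)}=q^{sr}\genfrac{[}{]}{0pt}{}{N-s}{r}.
\]
Substituting $N-s=n-k$, $s=k-h$ and $r=m-h$ gives $q^{(k-h)(m-h)}\genfrac{[}{]}{0pt}{}{n-k}{m-h}$.

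\textbf{Step 3: combine.} Each $X$ determines $I=X\cap W$ uniquely, so the total count is obtained by summing over the choices of $I$. Multiplying the count from Step 2 by the $\genfrac{[}{]}{0pt}{}{k}{h}$ choices of $I$ gives the claimed formula
\[
q^{(k-h)(m-h)}\genfrac{[}{]}{0pt}{}{k}{h}\genfrac{[}{]}{0pt}{}{n-k}{m-h}.
\]

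The only step needing care is the simplification of the ratio in Step 2, namely pulling out the powers of $q$ correctly to obtain the exponent $sr$. Everything else is routine.
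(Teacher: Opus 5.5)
Your proof is correct and complete. The quotient reduction is valid because $(X/I)\cap(W/I)=(X\cap W)/I$ whenever $X,W\supseteq I$. The choice count $q^N-q^{s+i}$ is exactly the condition $y_{i+1}\notin S+\langle y_1,\dots,y_i\rangle$, which is equivalent to keeping the vectors independent while keeping their span disjoint from $S$. The ratio does simplify to $q^{sr}\genfrac{[}{]}{0pt}{}{N-s}{r}$ under the paper's product definition of the Gaussian coefficient. The degenerate case $m-h>n-k$ also works, since the factor with $i=n-k$ makes both your ordered-basis product and the Gaussian coefficient vanish.

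The paper does not prove this lemma at all; it quotes it from Godsil and Meagher and uses it as a black box. So there is no argument in the paper to compare yours with, and yours is the standard basis-counting proof. An equivalent one-pass version avoids the quotient $U/I$. It counts ordered bases of $X$ whose first $h$ vectors form an independent tuple in $W$, and whose remaining $m-h$ vectors successively avoid $W$ plus the span of the earlier ones among them. Dividing by the number of such adapted bases of a fixed $X$ gives the same formula directly. Your route has the small advantage of isolating the count of subspaces meeting a fixed subspace trivially as a separate, reusable statement.
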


The following lemma can be easily proved.

\begin{Lemma}\label{GaussianBinomialProperty}
Let $m$ and $i$ be positive integers with $i \leq m$. Then the following hold.
\begin{itemize}
	\item [{\rm (i)}]  $\genfrac{[}{]}{0pt}{}{m}{i}=\frac{q^m-1}{q^i-1}\genfrac{[}{]}{0pt}{}{m-1}{i-1}$; $\genfrac{[}{]}{0pt}{}{m}{0}<\genfrac{[}{]}{0pt}{}{m}{1}<\cdots <\genfrac{[}{]}{0pt}{}{m}{\lfloor\frac{m}{2}\rfloor}$;
	\item [{\rm (ii)}] $q^{m-i}<\frac{q^m-1}{q^i-1}<q^{m-i+1}$ and $q^{i-m-1}<\frac{q^i-1}{q^m-1}<q^{i-m}$ if $i<m$;
	\item [{\rm (iii)}] $q^{i(m-i)}\leq \genfrac{[}{]}{0pt}{}{m}{i}<q^{i(m-i+1)}$, and $q^{i(m-i)}< \genfrac{[}{]}{0pt}{}{m}{i}$ if $i<m$.
\end{itemize}

\end{Lemma}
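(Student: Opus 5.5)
Each of the three parts reduces to elementary manipulation of the product formula $\genfrac{[}{]}{0pt}{}{m}{i}=\prod_{j=0}^{i-1}\frac{q^{m-j}-1}{q^{i-j}-1}$. The order is (i), then (ii), then (iii), since (iii) follows from (ii) applied factor by factor.

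For (i), I would pull out the $j=0$ factor of the product. It equals $\frac{q^m-1}{q^i-1}$, and the remaining factors are exactly $\genfrac{[}{]}{0pt}{}{m-1}{i-1}$. For the unimodality chain, I would compare consecutive terms:
\[
\frac{\genfrac{[}{]}{0pt}{}{m}{i+1}}{\genfrac{[}{]}{0pt}{}{m}{i}}=\frac{q^{m-i}-1}{q^{i+1}-1}.
\]
This ratio exceeds $1$ exactly when $m-i>i+1$, i.e. $i+1\le \lfloor m/2\rfloor$ with strictness handled as in the set case. More carefully, I need $i<\lfloor m/2\rfloor$ to give $m-i>i+1$. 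When $m$ is even this gives $m-i\ge i+2$. When $m$ is odd, $2i\le m-3$, so again $m-i\ge i+2$. In both cases the ratio is greater than $1$, as required.

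For (ii), assume $i<m$ and write
\[
\frac{q^m-1}{q^i-1}=q^{m-i}\cdot\frac{q^m-1}{q^m-q^{m-i}}.
\]
The second factor is greater than $1$ because $q^{m-i}>1$, which gives the lower bound. For the upper bound, $q^m-1<q^{m-i+1}(q^i-1)$ is equivalent to $q^{m-i+1}-1<q^{m+1}-q^m=q^m(q-1)$. This holds since $q^m(q-1)\ge q^m\ge q^{m-i+1}$, using $i\ge 1$. The second pair of inequalities in (ii) is just the reciprocal of the first.

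For (iii), take $j=0,\dots,i-1$. Each factor satisfies $\frac{q^{m-j}-1}{q^{i-j}-1}\ge q^{m-i}$, with equality only when $m=i$; when $m>i$ the inequality is strict by (ii). Each factor is also less than $q^{m-i+1}$: this is (ii) when $m>i$, and trivial when $m=i$ since then the factor equals $1<q$. Multiplying the $i$ factors gives
\[
q^{i(m-i)}\le \genfrac{[}{]}{0pt}{}{m}{i}<q^{i(m-i+1)},
\]
with strict lower inequality when $i<m$.

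The only step that needs care is the boundary case of the strict unimodality chain in (i), namely the parity of $m$. The rest is routine.
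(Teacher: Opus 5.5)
Your argument is correct. Each step checks out: pulling out the $j=0$ factor, the consecutive ratio $\frac{q^{m-i}-1}{q^{i+1}-1}$ together with the parity check, the rewriting $\frac{q^m-1}{q^i-1}=q^{m-i}\cdot\frac{q^m-1}{q^m-q^{m-i}}$, and the factor-by-factor bounds in (iii), where you treat $m=i$ separately. The paper gives no proof of this lemma beyond remarking that it ``can be easily proved,'' and your verification from the product definition is exactly the routine argument it leaves to the reader.
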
 

\begin{Lemma}\label{mono}
	Let $n$, $k$, $k'$, $t$ and $h$ be positive integers satisfying $n\geq k+k'-t$, $k \geq k' \geq t+1$,  and $t \leq h < k'$. Then the function
	\[
	F(h) := q^{(k-h)(k'-h)} \genfrac{[}{]}{0pt}{}{k}{h}\genfrac{[}{]}{0pt}{}{n-k}{k'-h}
	\]
	is strictly decreasing in $h$.
\end{Lemma}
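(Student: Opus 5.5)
Mirroring the proof of Lemma \ref{setmono}, I will compute the ratio of consecutive terms $R:=F(h)/F(h+1)$ for $t\le h\le k'-1$ and show that $R>1$.

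First I rewrite the Gaussian binomials in $F(h+1)$ in terms of those in $F(h)$. Directly from the definition of the Gaussian binomial coefficient,
\[
\genfrac{[}{]}{0pt}{}{k}{h+1}=\frac{q^{k-h}-1}{q^{h+1}-1}\genfrac{[}{]}{0pt}{}{k}{h},\qquad
\genfrac{[}{]}{0pt}{}{n-k}{k'-h}=\frac{q^{n-k-k'+h+1}-1}{q^{k'-h}-1}\genfrac{[}{]}{0pt}{}{n-k}{k'-h-1}.
\]
The $q$-power factor satisfies $(k-h)(k'-h)-(k-h-1)(k'-h-1)=k+k'-2h-1$. Substituting these, I get
\[
R=q^{k+k'-2h-1}\cdot\frac{q^{h+1}-1}{q^{k-h}-1}\cdot\frac{q^{n-k-k'+h+1}-1}{q^{k'-h}-1}.
\]
Note that $n-k-k'+h+1\ge h-t+1\ge 1$ because $n\ge k+k'-t$. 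Hence both Gaussian binomials in $F(h+1)$ are nonzero, and the ratio is well defined.

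Next I bound each factor from below using Lemma \ref{GaussianBinomialProperty}(ii), or the elementary inequality $\frac{q^a-1}{q^b-1}>q^{a-b-1}$ for positive integers $a,b$. This gives $\frac{q^{h+1}-1}{q^{k-h}-1}>q^{2h-k}$ and $\frac{q^{n-k-k'+h+1}-1}{q^{k'-h}-1}>q^{n-k-2k'+2h}$. Multiplying, I obtain
\[
R>q^{(k+k'-2h-1)+(2h-k)+(n-k-2k'+2h)}=q^{\,n-k-k'+2h-1}.
\]
Since $h\ge t\ge 1$ and $n\ge k+k'-t$, the exponent satisfies $n-k-k'+2h-1\ge 2h-t-1\ge t-1\ge 0$. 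Therefore $R>1$, i.e. $F(h)>F(h+1)$.

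The main obstacle is keeping the crude bounds sharp enough when $n$ is as small as $k+k'-t$. The $-1$ losses from applying (ii) to the two fractions must be absorbed by the surplus $2h-t-1$ coming from $h\ge t$. If equality cases such as $a=b$ cause trouble, I will instead use the exact bound $\frac{q^a-1}{q^b-1}\ge q^{a-b}$ when $a\ge b$, which is tighter.
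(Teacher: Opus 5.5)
Your proof is correct and follows essentially the same route as the paper. You compute the same ratio $F(h)/F(h+1)=q^{k+k'-2h-1}\cdot\frac{q^{h+1}-1}{q^{k-h}-1}\cdot\frac{q^{n-k-k'+h+1}-1}{q^{k'-h}-1}$ and show it exceeds $1$. The paper absorbs both denominators into the power of $q$ to get $R\ge \frac{1}{q}(q^{h+1}-1)(q^{n-k-k'+h+1}-1)>1$, whereas you bound each fraction separately and get $R>q^{n-k-k'+2h-1}\ge 1$. Your closing worry is unnecessary: $\frac{q^a-1}{q^b-1}>q^{a-b-1}$ holds for all positive integers $a,b$, including $a=b$, so the argument is already complete as written.
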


\begin{proof}
	Consider the ratio of two consecutive terms:
	\[
	R(h):= \frac{F(h)}{F(h+1)}
	= \frac{ q^{(k-h)(k'-h)} \genfrac{[}{]}{0pt}{}{k}{h} \genfrac{[}{]}{0pt}{}{n-k}{k'-h}}
	{ q^{(k-h-1)(k'-h-1)} \genfrac{[}{]}{0pt}{}{k}{h+1}\genfrac{[}{]}{0pt}{}{n-k}{k'-h-1}}.
	\]
	Using the identities $\genfrac{[}{]}{0pt}{}{k}{h+1} = \frac{q^{k-h}-1}{q^{h+1}-1} \genfrac{[}{]}{0pt}{}{k}{h}$ and $\genfrac{[}{]}{0pt}{}{n-k}{k'-h} = \frac{q^{n-k-k'+h+1}-1}{q^{k'-h}-1}\genfrac{[}{]}{0pt}{}{n-k}{k'-h-1}$ , we obtain

	$$R(h) = q^{k+k'-2h-1}\cdot \frac{q^{h+1}-1}{q^{k-h}-1}\cdot \frac{q^{n-k-k'+h+1}-1}{q^{k'-h}-1}\geq \frac{1}{q} \cdot (q^{h+1}-1) \cdot (q^{n-k-k'+h+1}-1).$$

	Since $h+1 \geq t+1 \geq 2$ and $n \geq k+k'-t \geq k+k'-h$, the factors satisfy $\frac{q^{h+1}-1}{q} \geq \frac{q^2-1}{q} > 1$ and $q^{n-k-k'+h+1}-1 \geq  q-1 \geq 1$. Thus, $F(h) > F(h+1)$ for every $h$ in the range $t \leq h \leq k'-1$, which implies that $F(h)$ is strictly decreasing in $h$. This completes the proof. \qed
\end{proof}

\begin{Lemma}\label{Upperbound}
	Let $n$, $k$, $k'$, $\ell$, $t$ and $m$ be non-negative integers satisfying $n\geq (2k-t)(t+1)+k+\ell+2$, $k \geq k' \geq t+1\geq 2$, $\ell \geq 2$, and $0 \leq m \leq k'-t-1$. Then the following inequalities hold
	\[
	\begin{aligned}
		&2\ell-2+ \left(\genfrac{[}{]}{0pt}{}{k'}{1}\ell+\ell-1\right)\binom{\ell}{2}\genfrac{[}{]}{0pt}{}{k'-1}{t}\left(\genfrac{[}{]}{0pt}{}{n-t}{k-t} - q^{(k-t)^{2}}\genfrac{[}{]}{0pt}{}{n-k}{k-t}+1\right) \\
		&\qquad + \left(\binom{\ell}{2}\genfrac{[}{]}{0pt}{}{2k'}{t+1}+\ell \genfrac{[}{]}{0pt}{}{k'}{t+1}\right)\genfrac{[}{]}{0pt}{}{n-t-1}{k-t-1}< \frac{\genfrac{[}{]}{0pt}{}{n-t}{k'-t}\genfrac{[}{]}{0pt}{}{n-t}{k-t}}{\genfrac{[}{]}{0pt}{}{n-t-m}{k'-t-m}},
	\end{aligned}
	\]
	\[
	\begin{aligned}
		&2\ell-2+ \left(\genfrac{[}{]}{0pt}{}{k}{1}\ell+\ell-1\right)\binom{\ell}{2}\genfrac{[}{]}{0pt}{}{k-1}{t}\left(\genfrac{[}{]}{0pt}{}{n-t}{k'-t} - q^{(k'-t)^{2}}\genfrac{[}{]}{0pt}{}{n-k'}{k'-t}+1\right) \\
		&\qquad + \left(\binom{\ell}{2}\genfrac{[}{]}{0pt}{}{2k}{t+1}+\ell \genfrac{[}{]}{0pt}{}{k}{t+1}\right)\genfrac{[}{]}{0pt}{}{n-t-1}{k'-t-1}< \frac{\genfrac{[}{]}{0pt}{}{n-t}{k'-t}\genfrac{[}{]}{0pt}{}{n-t}{k-t}}{\genfrac{[}{]}{0pt}{}{n-t-m}{k-t-m}}.
	\end{aligned}
	\]
\end{Lemma}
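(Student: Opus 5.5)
We prove only the first inequality; the second follows by the same argument with the roles of $k$ and $k'$ exchanged. The plan mirrors Lemma~\ref{setupperbound}, with binomials replaced by Gaussian binomials. Lemma~\ref{GaussianBinomialProperty} converts everything into powers of $q$. Since $n$ is now only linear in $k,t,\ell$, the slack must come from the base $q\geq 2$ raised to exponents that grow with $n$.

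\textbf{Reducing the right-hand side.} Since $m\leq k'-t-1$, we have $\genfrac{[}{]}{0pt}{}{n-t-m}{k'-t-m}\leq \genfrac{[}{]}{0pt}{}{n-t}{k'-t}$. To see this, apply Lemma~\ref{GaussianBinomialProperty}(i) repeatedly: each step multiplies by $\frac{q^{a}-1}{q^{b}-1}$ with $a>b$, which is at least $1$. Hence
\[
\mathrm{RHS}\geq \genfrac{[}{]}{0pt}{}{n-t}{k-t},
\]
and it suffices to show that the left-hand side is smaller than $\genfrac{[}{]}{0pt}{}{n-t}{k-t}$. Equivalently, divide the LHS by $\genfrac{[}{]}{0pt}{}{n-t-1}{k-t-1}$ and compare with $\frac{q^{n-t}-1}{q^{k-t}-1}>q^{n-k}$.

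\textbf{Bounding the key difference.} The central term is $D:=\genfrac{[}{]}{0pt}{}{n-t}{k-t}-q^{(k-t)^2}\genfrac{[}{]}{0pt}{}{n-k}{k-t}$. By Lemma~\ref{Spacenumber} with $W$ a $(k-t)$-space inside an $(n-t)$-space, $D$ counts the $(k-t)$-subspaces meeting $W$ nontrivially. Every such subspace contains a $1$-subspace of $W$. Therefore
\[
D\leq \genfrac{[}{]}{0pt}{}{k-t}{1}\genfrac{[}{]}{0pt}{}{n-t-1}{k-t-1}<q^{k-t+1}\genfrac{[}{]}{0pt}{}{n-t-1}{k-t-1}.
\]
This replaces the Bernoulli-inequality step of the set case. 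The additive constants $2\ell-2$ and the $+1$ inside the bracket are absorbed, because $\genfrac{[}{]}{0pt}{}{n-t-1}{k-t-1}\geq 1$. When $k=t+1$ the term is exactly $1$ and the bound is immediate, so no separate case is needed.

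\textbf{Collecting powers of $q$.} Using Lemma~\ref{GaussianBinomialProperty}(iii), we bound each coefficient:
\[
\genfrac{[}{]}{0pt}{}{k'}{1}<q^{k'},\quad \genfrac{[}{]}{0pt}{}{k'-1}{t}<q^{t(k'-t)},\quad \genfrac{[}{]}{0pt}{}{2k'}{t+1}<q^{(t+1)(2k'-t)},\quad \genfrac{[}{]}{0pt}{}{k'}{t+1}<q^{(t+1)(k'-t)}.
\]
For the polynomial factors in $\ell$ we use $\ell\leq 2^{\ell-1}\leq q^{\ell-1}$ and $\binom{\ell}{2}<q^{\ell}$. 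After dividing by $\genfrac{[}{]}{0pt}{}{n-t-1}{k-t-1}$, the first main term becomes at most about
\[
q^{k'+1}\cdot q^{\ell}\cdot q^{t(k'-t)}\cdot q^{k-t+1},
\]
and the second at most about $q^{\ell}\cdot q^{(t+1)(2k'-t)}$, up to a factor of $2$. Using $k'\leq k$, both exponents are at most $(2k-t)(t+1)+\ell+1$, or close to it. The target is to show each of the two terms is below $\tfrac12 q^{n-k}$. This holds provided $n-k\geq (2k-t)(t+1)+\ell+2$, which is exactly the hypothesis.

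\textbf{Main obstacle.} The hardest part is the exponent bookkeeping. The hypothesis has essentially no slack: $k'+t(k'-t)+k-t+1$ must fit inside $(2k-t)(t+1)$ with room for the factor $2$ and for the constants $\ell$ and $\ell-1$. The first check is the crude inequality $k'+t(k'-t)+(k-t)+1\leq (t+1)(2k-t)-?$, and the estimates on the $\ell$-factors may need to be tightened to make it fit. If it does not fit, I would bound $\genfrac{[}{]}{0pt}{}{k'}{1}\genfrac{[}{]}{0pt}{}{k'-1}{t}$ jointly by $\genfrac{[}{]}{0pt}{}{k'}{1}\genfrac{[}{]}{0pt}{}{k'-1}{t}\leq q^{t+1}\genfrac{[}{]}{0pt}{}{2k'}{t+1}$-type comparisons, and use the sharper bound $\frac{q^a-1}{q^b-1}<q^{a-b+1}$ only where it is needed.
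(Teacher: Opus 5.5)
Your route is correct in substance and differs usefully from the paper's.

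\textbf{Comparison with the paper.} The paper splits off $k=t+1$. For $k>t+1$ it absorbs $2\ell-2$ and the $+1$ by showing that $\genfrac{[}{]}{0pt}{}{n-t-1}{k-t-1}$ dominates them, which is impossible when $k=t+1$. It then bounds $D$ by writing $q^{(k-t)^2}\genfrac{[}{]}{0pt}{}{n-k}{k-t}$ as $\genfrac{[}{]}{0pt}{}{n-t}{k-t}$ times a product and applying Bernoulli, giving $D\le (k-t)q^{2k-t-1-n}\genfrac{[}{]}{0pt}{}{n-t}{k-t}$.

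Your union bound $D\le\genfrac{[}{]}{0pt}{}{k-t}{1}\genfrac{[}{]}{0pt}{}{n-t-1}{k-t-1}$ is correct, no weaker, and more transparent. Because you absorb constants via $c\le c\,\genfrac{[}{]}{0pt}{}{n-t-1}{k-t-1}$ rather than by domination, the case $k=t+1$ (where $D=1$) needs no separate treatment. Your reduction $\mathrm{RHS}\ge\genfrac{[}{]}{0pt}{}{n-t}{k-t}$ is also right.

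\textbf{The gap.} You never carry out the final comparison, which is the entire content of this lemma, and the estimates you list would not close it. If you use $\ell\le q^{\ell-1}$ and $\binom{\ell}{2}<q^{\ell}$ separately, the first term carries $q^{2\ell-1}$. The exponent check then needs roughly $\ell\le tk+1$, so your estimate chain breaks down, e.g., for $t=1$, $k=k'=2$, $\ell\ge 4$. Your figure ``about $q^{k'+1}\cdot q^{\ell}$'' tacitly uses something stronger than the bounds you state.

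\textbf{The fix.} Bound the $\ell$-factor jointly: $\ell\binom{\ell}{2}<\ell^{3}/2\le q^{\ell+1}$, since $\ell^{3}\le 2^{\ell+2}$. Write $N=\genfrac{[}{]}{0pt}{}{n-t-1}{k-t-1}$ and use $\genfrac{[}{]}{0pt}{}{a}{1}+1\le q^{a}$. Then
\[
\Bigl(\genfrac{[}{]}{0pt}{}{k'}{1}\ell+\ell-1\Bigr)\binom{\ell}{2}\genfrac{[}{]}{0pt}{}{k'-1}{t}\Bigl(1+\genfrac{[}{]}{0pt}{}{k-t}{1}\Bigr)<q^{\ell+1+k'+t(k'-t)+k-t}\le q^{(2k-t)(t+1)+\ell+1-tk},
\]
because $(2k-t)(t+1)-\bigl(2k-t+t(k-t)\bigr)=tk$.

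So, contrary to your remark that there is essentially no slack, this term has slack $q^{tk}\ge q^{2}$. The term that uses the full exponent $(2k-t)(t+1)$ is $\binom{\ell}{2}\genfrac{[}{]}{0pt}{}{2k'}{t+1}<q^{(2k-t)(t+1)+\ell}$. Adding $\ell\genfrac{[}{]}{0pt}{}{k'}{t+1}+2\ell-2<\ell q^{(2k-t)(t+1)}$, the total coefficient of $N$ is below
\[
q^{(2k-t)(t+1)}\bigl(q^{\ell-1}+q^{\ell}+\ell\bigr)\le q^{(2k-t)(t+1)+\ell+2}\le q^{n-k}.
\]
Finally, $q^{n-k}N<\genfrac{[}{]}{0pt}{}{n-t}{k-t}$, which finishes the first inequality.

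The second inequality goes the same way. All exponents are bounded in terms of $k$, the target becomes $q^{n-k'}\genfrac{[}{]}{0pt}{}{n-t-1}{k'-t-1}$, and $n-k'\ge n-k$ only helps. With this computation written out, your proof is complete.
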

\begin{proof}
We only prove the first inequality, the second follows from an analogous argument.
The proof is divided into two cases.
	
	\noindent\textbf{Case 1: $k= t+1$.}  
	Under this assumption, the left-hand side of the inequality can be simplified to 
\[
\begin{aligned}
\text{LHS}&=2\ell-2+(\frac{q^{t+1}-1}{q-1}\ell+\ell-1)\binom{\ell}{2}(\frac{q^{n-t}-1}{q-1}-\frac{q^{n-t}-q}{q-1}+1) + \binom{\ell}{2} \genfrac{[}{]}{0pt}{}{2k'}{t+1}+\ell\\
&=3\ell-2+2(\frac{q^{t+1}-1}{q-1}\ell+\ell-1)\binom{\ell}{2}+ \binom{\ell}{2} \genfrac{[}{]}{0pt}{}{2k'}{t+1},
\end{aligned}
\]
while the right-hand side becomes 
\[
\text{RHS}=\frac{\genfrac{[}{]}{0pt}{}{n-t}{k'-t}\genfrac{[}{]}{0pt}{}{n-t}{k-t}}{\genfrac{[}{]}{0pt}{}{n-t-m}{k'-t-m}}= \genfrac{[}{]}{0pt}{}{n-t}{k-t}=\frac{q^{n-t}-1}{q-1}.
\]
Since $t\geq 1$ and $\ell\geq 2$, by Lemma \ref{GaussianBinomialProperty} we have $3\ell-2+(q^{t+1}\ell-1)(\ell^{2}-\ell)<3\ell-2+q^{t+1}\ell(\ell^{2}-\ell)<q^{t+1}\ell^{3}$,  $\ell^{3}\leq q^{\ell+2}$ and $\frac{\ell^{2}}{2}\leq q^{\ell}$. According to $n\geq (2k-t)(t+1)+k+\ell+2$, it follows from Lemma \ref{GaussianBinomialProperty} that
	\[
	\begin{aligned}
		\text{LHS} 
		&< 3\ell-2+(q^{t+1}\ell-1)(\ell^{2}-\ell)+ \binom{\ell}{2} \genfrac{[}{]}{0pt}{}{2k}{t+1}\\
		&\leq q^{t+1}\ell^{3}+ \frac{\ell^{2}}{2}q^{(2k-t)(t+1)}\leq q^{t+\ell+3} 
		+ q^{(2k-t)(t+1)+\ell} \\
		&\leq q^{(2k-t)(t+1) + \ell + 1} 
		< q^{n-t-1}\leq \frac{q^{n-t} - 1}{q-1} = \text{RHS}.
	\end{aligned}
	\]
	
	\noindent\textbf{Case 2: $k > t+1$.}  
	Since $n\geq (2k-t)(t+1)+k+\ell+2$, $\ell\geq 2$ and $k-t-1\geq 1$, by Lemma \ref{GaussianBinomialProperty} we have $\frac{\ell}{2}\genfrac{[}{]}{0pt}{}{2k'}{t+1}\geq \ell\genfrac{[}{]}{0pt}{}{k'}{t+1}+1$, $q^k\geq  \genfrac{[}{]}{0pt}{}{k}{1}+1$ and 
\[
\begin{aligned}
\genfrac{[}{]}{0pt}{}{n-t-1}{k-t-1}&\geq \frac{q^{n-t-1}-1}{q-1}>q^{n-t-2}\\
&\geq  q^{(2k-t-1)(t+1)+k+\ell+1}\\
&\geq q^{(k+1)(t+1)+k+\ell+1}\geq \frac{\ell^{3}}{2}q^{k}\cdot q^{k't}\\
&\geq \frac{\ell^{2}}{2}q^{k}\genfrac{[}{]}{0pt}{}{k'-1}{t}+q^{k}\ell\binom{\ell}{2}\genfrac{[}{]}{0pt}{}{k'-1}{t}\\
&\geq 2\ell-2+ (\genfrac{[}{]}{0pt}{}{k'}{1}\ell+\ell-1)\binom{\ell}{2}\genfrac{[}{]}{0pt}{}{k'-1}{t}.
\end{aligned}
\]
It follows from Lemma \ref{GaussianBinomialProperty} that
	\[
	\begin{aligned}
		\text{LHS} 
		&<  \left (\genfrac{[}{]}{0pt}{}{k'}{1}\ell+\ell-1\right )\binom{\ell}{2}\genfrac{[}{]}{0pt}{}{k'-1}{t}\left(\genfrac{[}{]}{0pt}{}{n-t}{k-t} - q^{(k-t)^{2}}\genfrac{[}{]}{0pt}{}{n-k}{k-t}\right) \\
		&\qquad + \left(\binom{\ell}{2}\genfrac{[}{]}{0pt}{}{2k'}{t+1}+\ell \genfrac{[}{]}{0pt}{}{k'}{t+1}+1\right)\genfrac{[}{]}{0pt}{}{n-t-1}{k-t-1}\\
		&\leq (\frac{q^{k'}-1}{q-1}\ell+\ell-1)\binom{\ell}{2}\genfrac{[}{]}{0pt}{}{k'-1}{t}\left(\genfrac{[}{]}{0pt}{}{n-t}{k-t} - \genfrac{[}{]}{0pt}{}{n-t}{k-t}\prod_{i=0}^{k-t-1}(1-\frac{q^{k-t}-1}{q^{n-t-i}-1})\right) \\
		&\qquad + \left(\binom{\ell}{2}\genfrac{[}{]}{0pt}{}{2k'}{t+1}+\frac{\ell}{2}\genfrac{[}{]}{0pt}{}{2k'}{t+1}\right)\genfrac{[}{]}{0pt}{}{n-t-1}{k-t-1} \\
		&\leq q^{k'}\ell\binom{\ell}{2}\genfrac{[}{]}{0pt}{}{k'-1}{t}\left(\genfrac{[}{]}{0pt}{}{n-t}{k-t} - \genfrac{[}{]}{0pt}{}{n-t}{k-t}\prod_{i=0}^{k-t-1}(1-\frac{q^{k-t}-1}{q^{n-t-i}-1})\right)+\frac{\ell^{2}}{2}\genfrac{[}{]}{0pt}{}{2k'}{t+1}\genfrac{[}{]}{0pt}{}{n-t-1}{k-t-1} \\
		&\leq \frac{\ell^{3}}{2}q^{k'}q^{(k'-t)t}\left(1- \prod_{i=0}^{k-t-1}(1-\frac{q^{k-t}-1}{q^{n-t-i}-1})\right)\genfrac{[}{]}{0pt}{}{n-t}{k-t}+ \frac{\ell^{2}}{2}\genfrac{[}{]}{0pt}{}{2k}{t+1}\frac{\genfrac{[}{]}{0pt}{}{n-t}{k-t}}{q^{n-k}}\\
		&\leq q^{k'+\ell+1+(k'-t)t}\left(1- \prod_{i=0}^{k-t-1}(1-\frac{q^{k-t}-1}{q^{n-k+1}-1})\right)\genfrac{[}{]}{0pt}{}{n-t}{k-t}+ \frac{\genfrac{[}{]}{0pt}{}{n-t}{k-t}q^{(2k-t)(t+1)+\ell}}{q^{(2k-t)(t+1)+\ell+2}} \\
		&\leq  q^{k'+\ell+1+(k'-t)t}\left(1-(1-q^{2k-t-1-n})^{k-t}\right)\genfrac{[}{]}{0pt}{}{n-t}{k-t}+ \frac{\genfrac{[}{]}{0pt}{}{n-t}{k-t}}{q^{2}}.
\end{aligned}
	\]
Since  $-q^{2k-t-1-n}>-1$, by Bernoulli inequality $(1+p)^{m}\geq 1+mp$  we obtain
\[
(1-q^{2k-t-1-n})^{k-t}\geq 1-(k-t)q^{2k-t-1-n}\geq 1-(k-t)q^{-(2k-t)t-k-\ell-3},
\]
and
\[
	\begin{aligned}
         \text{LHS} 
		&<  (k-t)q^{k'+\ell+1+(k'-t)t}q^{-(2k-t)t-k-\ell-3}\genfrac{[}{]}{0pt}{}{n-t}{k-t}+ \frac{\genfrac{[}{]}{0pt}{}{n-t}{k-t}}{q^{2}}\\
		&\leq [(k-t)q^{-kt-2}+\frac{1}{q^{2}}]\genfrac{[}{]}{0pt}{}{n-t}{k-t}< \genfrac{[}{]}{0pt}{}{n-t}{k-t}\leq\text{RHS},
	\end{aligned}
	\]
	which completes the  proof of the first inequality.   
 \qed
\end{proof}

\begin{Lemma}\label{Upperbound2}
	Let $n, k, k', \ell$ and $t$ be positive integers satisfying $n\geq (2k-t+1)(t+1)+(k-t+1)k'+k+2\ell-1$, $k \geq k' \geq t+1$, and $\ell \geq 2$. Then the following inequalities hold
	\[
	\frac{\genfrac{[}{]}{0pt}{}{n-t}{k-t}\genfrac{[}{]}{0pt}{}{n-t}{k'-t}}{\left(\binom{\ell}{2}\genfrac{[}{]}{0pt}{}{k'-1}{t}+2\right)(\ell-1)+\left(\binom{\ell}{2}\genfrac{[}{]}{0pt}{}{2k'}{t+1}+\ell\genfrac{[}{]}{0pt}{}{k'}{t+1}\right)\genfrac{[}{]}{0pt}{}{n-t-1}{k-t-1}}\geq \ell\sum_{h=t}^{k'} q^{(k-h)(k'-h)} \genfrac{[}{]}{0pt}{}{k}{h}\genfrac{[}{]}{0pt}{}{n-k}{k'-h}+\ell,
	\]
	\[
	\frac{\genfrac{[}{]}{0pt}{}{n-t}{k-t}\genfrac{[}{]}{0pt}{}{n-t}{k'-t}}{\left(\binom{\ell}{2}\genfrac{[}{]}{0pt}{}{k-1}{t}+2\right)(\ell-1)+\left(\binom{\ell}{2}\genfrac{[}{]}{0pt}{}{2k}{t+1}+\ell\genfrac{[}{]}{0pt}{}{k}{t+1}\right)\genfrac{[}{]}{0pt}{}{n-t-1}{k'-t-1}}\geq \ell\sum_{h=t}^{k'} q^{(k-h)(k'-h)} \genfrac{[}{]}{0pt}{}{k'}{h}\genfrac{[}{]}{0pt}{}{n-k'}{k-h}+\ell.
	\]
\end{Lemma}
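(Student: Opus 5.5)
We prove only the first inequality; the second follows by the same argument with the roles of $k$ and $k'$ exchanged. We mirror the proof of Lemma \ref{setupperbound2}: bound the denominator of the LHS from above, bound the RHS from above using Lemma \ref{mono}, and compare the two through powers of $q$ via Lemma \ref{GaussianBinomialProperty}.

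\textbf{Right-hand side.} Since $n\geq k+k'-t$, Lemma \ref{mono} shows that the summand $F(h)$ is strictly decreasing on $t\le h\le k'$. Hence
\[
\ell\sum_{h=t}^{k'}F(h)+\ell\le \ell(k'-t+1)q^{(k-t)(k'-t)}\genfrac{[}{]}{0pt}{}{k}{t}\genfrac{[}{]}{0pt}{}{n-k}{k'-t}+\ell .
\]
By Lemma \ref{GaussianBinomialProperty}(iii) we have $\genfrac{[}{]}{0pt}{}{k}{t}<q^{t(k-t+1)}$. Also $\genfrac{[}{]}{0pt}{}{n-k}{k'-t}\le\genfrac{[}{]}{0pt}{}{n-t}{k'-t}$. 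Since $k'-t+1\le 2^{k'-t}\le q^{k'-t}$, the RHS is at most
\[
\ell\, q^{(k-t+1)(k'-t)+t(k-t+1)}\genfrac{[}{]}{0pt}{}{n-t}{k'-t}+\ell .
\]
The additive $\ell$ is absorbed by doubling the coefficient, i.e.\ by one extra factor $q$.

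\textbf{Denominator on the left.} We split into the same two cases as Lemma \ref{Upperbound}.

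\emph{Case $k=t+1$.} Here $\genfrac{[}{]}{0pt}{}{n-t-1}{0}=1$, so the denominator is a constant. By Lemma \ref{GaussianBinomialProperty} it is at most $q^{(2k-t)(t+1)+2\ell}$, say, using $\ell^3\le q^{\ell+2}$ and $\binom{\ell}{2}\le q^{\ell}$. The numerator is $\genfrac{[}{]}{0pt}{}{n-t}{1}\genfrac{[}{]}{0pt}{}{n-t}{k'-t}>q^{n-t-1}\genfrac{[}{]}{0pt}{}{n-t}{k'-t}$.

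\emph{Case $k>t+1$.} As in Lemma \ref{Upperbound}, $\genfrac{[}{]}{0pt}{}{n-t-1}{k-t-1}$ dominates the constant $(\binom{\ell}{2}\genfrac{[}{]}{0pt}{}{k'-1}{t}+2)(\ell-1)$. We also use $\frac{\ell}{2}\genfrac{[}{]}{0pt}{}{2k'}{t+1}\ge \ell\genfrac{[}{]}{0pt}{}{k'}{t+1}+1$. Together these bound the denominator by $\frac{\ell^2}{2}\genfrac{[}{]}{0pt}{}{2k}{t+1}\genfrac{[}{]}{0pt}{}{n-t-1}{k-t-1}$. By Lemma \ref{GaussianBinomialProperty}(i),(ii),
\[
\genfrac{[}{]}{0pt}{}{n-t}{k-t}\big/\genfrac{[}{]}{0pt}{}{n-t-1}{k-t-1}=\frac{q^{n-t}-1}{q^{k-t}-1}>q^{n-k}.
\]
Using $\genfrac{[}{]}{0pt}{}{2k}{t+1}<q^{(2k-t)(t+1)}$, the LHS exceeds
\[
q^{n-k-(2k-t)(t+1)-\ell}\genfrac{[}{]}{0pt}{}{n-t}{k'-t}.
\]

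\textbf{Comparing exponents.} In both cases it suffices to check an exponent inequality. In the second case it reads
\[
n-k-(2k-t)(t+1)-\ell\ \ge\ (k-t+1)(k'-t)+t(k-t+1)+\log_q\ell+1 .
\]
The right side is at most $(k-t+1)k'+\ell+1$. Thus it suffices that $n\ge(2k-t)(t+1)+(k-t+1)k'+k+2\ell+1$, which is implied by the hypothesis $n\ge(2k-t+1)(t+1)+(k-t+1)k'+k+2\ell-1$, since $t+1\ge2$. The case $k=t+1$ is weaker and is handled the same way.

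The main obstacle is the bookkeeping of these exponents. The slack between the hypothesis on $n$ and what is needed is only about $t+1$ powers of $q$. So the crude estimates must not be too lossy: in particular $\genfrac{[}{]}{0pt}{}{k}{t}$, the factor $k'-t+1$, and the $\ell$-dependent constants. If one estimate is too lossy, I would keep $\genfrac{[}{]}{0pt}{}{n-k}{k'-t}/\genfrac{[}{]}{0pt}{}{n-t}{k'-t}<1$ more carefully, or use $\ell\le q^{\ell-1}$, to recover the missing powers.
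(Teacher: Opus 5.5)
Your route is the paper's: you bound the right-hand side by $\ell(k'-t+1)F(t)+\ell$ via Lemma~\ref{mono}, bound the denominator by $\frac{\ell^2}{2}\genfrac{[}{]}{0pt}{}{2k'}{t+1}\genfrac{[}{]}{0pt}{}{n-t-1}{k-t-1}$, use $\genfrac{[}{]}{0pt}{}{n-t}{k-t}/\genfrac{[}{]}{0pt}{}{n-t-1}{k-t-1}>q^{n-k}$, and compare powers of $q$. Your case $k>t+1$ is correct.

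The gap is the case $k=t+1$ (so $k'=t+1$), which you dismiss as ``weaker and handled the same way''. With the estimates you actually wrote, it is not weaker. Your denominator bound $q^{(2k-t)(t+1)+2\ell}$ gives $\mathrm{LHS}>q^{n-k-(2k-t)(t+1)-2\ell}\genfrac{[}{]}{0pt}{}{n-t}{1}$, a factor $q^{\ell}$ worse than in your other case. The exponent comparison then needs $n\ge(2k-t)(t+1)+(k-t+1)k'+k+2\ell+1+\log_q\ell$, whereas the hypothesis only gives $n\ge(2k-t)(t+1)+(k-t+1)k'+k+2\ell+t$, so it fails for $t=1$. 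Concretely, take $q=2$, $t=1$, $k=k'=\ell=2$, $n=17$, which the hypothesis allows. Your chain yields $\mathrm{LHS}>2^{5}\genfrac{[}{]}{0pt}{}{16}{1}$ and $\mathrm{RHS}\le 2^{5}\genfrac{[}{]}{0pt}{}{16}{1}+2$, which proves nothing. The true values are about $1.07\cdot 10^{8}$ and $393208$, so the statement itself is fine. The fallback remedies you list each recover only a bounded number of powers of $q$, while the deficit at $t=1$ is $\log_q\ell$, so they do not close this case for large $\ell$.

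What is needed is the sharper bound on the constant denominator that the paper uses. Since $\genfrac{[}{]}{0pt}{}{2t+2}{t+1}\ge\genfrac{[}{]}{0pt}{}{4}{2}\ge 35$, one has $\bigl(\binom{\ell}{2}+2\bigr)(\ell-1)+\ell<(\ell-1)\binom{\ell}{2}\genfrac{[}{]}{0pt}{}{2t+2}{t+1}$. Hence the denominator is below $\ell\binom{\ell}{2}\genfrac{[}{]}{0pt}{}{2t+2}{t+1}\le q^{(t+1)(t+2)+\ell+1}$. This recovers $\ell-1$ powers of $q$, and the exponent slack (with your right-hand-side bound) becomes $t-2+\ell-\log_q\ell\ge 0$, which closes the case.
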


\begin{proof}
We only prove the first inequality, the second follows from an analogous argument.
The proof is divided into two cases.
	
	\noindent\textbf{Case 1: $k = t+1$.}  
	Under this assumption, the left-hand side of the inequality can be simplified to
	\[
	\text{LHS} = \frac{\genfrac{[}{]}{0pt}{}{n-t}{1}\genfrac{[}{]}{0pt}{}{n-t}{1}}{\left(\binom{\ell}{2}+2\right)(\ell-1)+\ell+\binom{\ell}{2}\genfrac{[}{]}{0pt}{}{2t+2}{t+1}},
	\]
	while the right-hand side becomes
	\[
	\text{RHS}=q\ell\genfrac{[}{]}{0pt}{}{t+1}{1}\genfrac{[}{]}{0pt}{}{n-t-1}{1}+2\ell.
	\]
Since $t\geq 1$ and $\ell\geq 2$, by Lemma \ref{GaussianBinomialProperty} we have $(\ell-1)\binom{\ell}{2}\genfrac{[}{]}{0pt}{}{2t+2}{t+1}\geq (\ell-1)\binom{\ell}{2}\genfrac{[}{]}{0pt}{}{4}{2}\geq 35\cdot (\ell-1)\binom{\ell}{2}>\left(\binom{\ell}{2}+2\right)(\ell-1)+\ell$. According to $n\geq (2k-t+1)(t+1)+(k-t+1)k'+k+2\ell-1>(2k-t+1)(t+1)+k+2\ell+2$, it follows from Lemma \ref{GaussianBinomialProperty} that 
		\[
	\begin{aligned}
		\text{LHS} 
		&=\frac{\genfrac{[}{]}{0pt}{}{n-t}{1}\genfrac{[}{]}{0pt}{}{n-t}{1}}{\left(\binom{\ell}{2}+2\right)(\ell-1)+\ell+\binom{\ell}{2}\genfrac{[}{]}{0pt}{}{2t+2}{t+1}} \geq \frac{q^{2(n-t-1)}}{\ell\binom{\ell}{2}\genfrac{[}{]}{0pt}{}{2t+2}{t+1}}\\
		&\geq  \frac{q^{(2k-t)(t+1)+k+2\ell+2}\cdot q^{n-t-1}}{\frac{\ell^{3}}{2}\genfrac{[}{]}{0pt}{}{2t+2}{t+1}}\geq  \frac{q^{(2k-t)(t+1)+k+2\ell+2}\cdot q^{n-t-1}}{q^{\ell+1+(t+2)(t+1)}}\\
		&\geq q^{k+\ell+1}\cdot q^{n-t-1}=  (q^{\ell}\cdot q^{t+2}-1)q^{n-t-1}+q^{n-t-1}\\
		&\geq \ell\frac{q^{t+2}-q}{q-1}\cdot\frac{q^{n-t-1}-1}{q-1}+2\ell= \text{RHS}.
	\end{aligned}
	\]
	
	\noindent\textbf{Case 2: $k > t+1$.}  Since $n\geq (2k-t+1)(t+1)+(k-t+1)k'+k+2\ell-1$, $\ell\geq 2$ and $k-t-1\geq 1$, by Lemma \ref{GaussianBinomialProperty} we have  $\genfrac{[}{]}{0pt}{}{n-t-1}{k-t-1}\geq \genfrac{[}{]}{0pt}{}{n-t-1}{1}>q^{n-t-2}>\left(\binom{\ell}{2}\genfrac{[}{]}{0pt}{}{k'-1}{t}+2\right)(\ell-1)$, $\frac{\ell}{2}\genfrac{[}{]}{0pt}{}{2k'}{t+1}\geq \ell\genfrac{[}{]}{0pt}{}{k'}{t+1}+1$, $\frac{\ell^2}{2}\leq q^{\ell}$, $\genfrac{[}{]}{0pt}{}{2k'}{t+1}\leq q^{(2k'-t)(t+1)}$ and $q^{k'+\ell}\geq 2\ell k'\geq 2\ell(k'-t+1)$. It follows that
	\[
	\begin{aligned}
		\text{LHS} 
		&\geq  \frac{\genfrac{[}{]}{0pt}{}{n-t}{k-t}\genfrac{[}{]}{0pt}{}{n-t}{k'-t}}{\left(\binom{\ell}{2}\genfrac{[}{]}{0pt}{}{2k'}{t+1}+\ell\genfrac{[}{]}{0pt}{}{k'}{t+1}+1\right)\genfrac{[}{]}{0pt}{}{n-t-1}{k-t-1}}\geq   \frac{\genfrac{[}{]}{0pt}{}{n-t}{k'-t}(q^{n-t}-1)}{\left(\binom{\ell}{2}\genfrac{[}{]}{0pt}{}{2k'}{t+1}+\frac{\ell}{2}\genfrac{[}{]}{0pt}{}{2k'}{t+1}\right)(q^{k-t}-1)} \\
		&\geq \frac{q^{n-k}}{\frac{\ell^{2}}{2}\genfrac{[}{]}{0pt}{}{2k'}{t+1}} \genfrac{[}{]}{0pt}{}{n-t}{k'-t}\geq  \frac{q^{(2k-t+1)(t+1)+(k-t+1)k'+2\ell-1}}{q^{(2k'-t)(t+1)+\ell}}\genfrac{[}{]}{0pt}{}{n-k}{k'-t}\\
		&\geq  q^{(k-t+1)k'+\ell+t}\genfrac{[}{]}{0pt}{}{n-k}{k'-t}= q^{k'+\ell}\cdot q^{(k-t)(k'-t)}\cdot q^{(k-t+1)t}\cdot \genfrac{[}{]}{0pt}{}{n-k}{k'-t}\\
		&\geq  \ell(k'-t+1)q^{(k-t)(k'-t)}\cdot q^{(k-t+1)t}\genfrac{[}{]}{0pt}{}{n-k}{k'-t}+\ell q^{(k-t)(k'-t)}\cdot q^{(k-t+1)t}\genfrac{[}{]}{0pt}{}{n-k}{k'-t}\\
		&\geq  \ell(k'-t+1)q^{(k-t)(k'-t)} \genfrac{[}{]}{0pt}{}{k}{t}\genfrac{[}{]}{0pt}{}{n-k}{k'-t}+\ell.
	\end{aligned}
	\]

	By Lemma \ref{mono} and $n\geq k+k'-t$, we obtain $\ell(k'-t+1)q^{(k-t)(k'-t)} \genfrac{[}{]}{0pt}{}{k}{t}\genfrac{[}{]}{0pt}{}{n-k}{k'-t}+\ell\geq \text{RHS}$, which completes the proof of the first inequality.
\qed
\end{proof}

\section{Proof of Theorem \ref{Set}}

As stated in the introduction, our idea of proving  Theorem~\ref{Set} was inspired by the method in \cite{ai2026}, namely the sunflower method.
A $k$-uniform sunflower is a family of $k$-sets in which all pairs of distinct sets have the same intersection (called the kernel, the remaining parts are called petals). For positive integers $k, t, u$ with $t\leq k\leq n$, we denote by $\mathcal{S}(k,t,u)\subseteq \binom{n}{k}$ a $k$-uniform sunflower with a kernel of size $t$ and $u$ petals. 

For the proof of Theorem~\ref{Set}, we first need the following lemmas.

\begin{Lemma}[\cite{ai2026}]\label{setsunflower}
	Let $r=(1+k')\ell$. Let $\mathcal {F}\subseteq \binom{[n]}{k}$ and $\mathcal{G}\subseteq \binom{[n]}{k'}$ be $\ell$-weakly cross $t$-intersecting. Suppose that $\mathcal{F}$ contains a sunflower $\mathcal{S}(k,t,r)$ with the kernel $K$ as a subfamily. Then every member of $\mathcal{G}$ contains $K$, thereby $|\mathcal{G}|\leq \binom{n-t}{k'-t}.$
\end{Lemma}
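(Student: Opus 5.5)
We argue by contradiction. Suppose some $G\in\mathcal{G}$ does not contain $K$, so $|G\cap K|\le t-1$. Let the sunflower be $S_1,\dots,S_r$ with kernel $K$, $|K|=t$, and pairwise disjoint petals $S_i\setminus K$, where $r=(1+k')\ell$. The plan is to pick $\ell$ members of $\mathcal{G}$, including $G$, and $\ell$ petals of the sunflower so that the sum $\sum_{i,j}|F_i\cap G_j|$ falls below $\ell^2t-\ell+1$.

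\textbf{Choosing the $\mathcal{G}$-side.} Take $G_1=G$. We need $\ell-1$ further distinct members $G_2,\dots,G_\ell\in\mathcal{G}$. For these we only use the trivial bound $|F\cap G_j|\le |F\cap K|+|G_j\cap(F\setminus K)|$. If $|\mathcal{G}|<\ell$, the weakly intersecting condition is vacuous. However, the conclusion $|\mathcal{G}|\le\binom{n-t}{k'-t}$ is then trivial, and I would state the lemma's first assertion under the assumption $|\mathcal{G}|\ge \ell$, as is implicit in its use.

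\textbf{Choosing the $\mathcal{F}$-side.} The set $G_1\cup\dots\cup G_\ell$ has at most $k'\ell$ elements. Since the petals are pairwise disjoint, at most $k'\ell$ of them meet this union, so at least $r-k'\ell=\ell$ petals avoid every $G_j$. Take $F_1,\dots,F_\ell$ to be those sunflower members. Then $F_i\cap G_j=K\cap G_j$ for all $i,j$, so $|F_i\cap G_j|\le t$ for every pair, and $|F_i\cap G_1|\le t-1$ for each $i$.

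\textbf{Counting.} The sum is therefore at most $\ell^2 t-\ell$, which contradicts $\ell$-weak cross $t$-intersection. Hence every $G\in\mathcal{G}$ contains $K$. Counting the $k'$-sets containing a fixed $t$-set gives $|\mathcal{G}|\le\binom{n-t}{k'-t}$.

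\textbf{Main difficulty.} The only delicate step is fixing the choice of the other $G_j$ \emph{before} choosing the petals, so that a single set of $\ell$ petals avoids all $G_j$ simultaneously. The value $r=(1+k')\ell$ is exactly what the pigeonhole count on disjoint petals requires. Distinctness of the $F_i$ is automatic, since they are distinct sunflower members.
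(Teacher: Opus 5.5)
Your proof is correct and is essentially the paper's argument. The paper cites \cite{ai2026} for the set version, but its proof of the subspace analogue (Lemma~\ref{sunflower}) is the same pigeonhole on pairwise disjoint petals: it discards the at most $k'\ell$ sunflower members whose petals meet the chosen $G_1,\dots,G_\ell$ and bounds the sum by $\ell(\ell-1)t+\ell(t-1)$.

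Your remark that the first assertion needs $|\mathcal{G}|\ge\ell$ addresses a point the paper's proof tacitly assumes. In that degenerate case, $|\mathcal{G}|\le \ell-1<\binom{n-t}{k'-t}$ does hold for $k'\ge t+1$, because the $r$ disjoint nonempty petals force $n-t\ge (1+k')\ell$.
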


Since the roles of $\mathcal{F}$ and $\mathcal{F}'$ are symmetric, Ai et al. obtained the following corollary as an immediate consequence of Lemma \ref{setsunflower}.

\begin{Corollary}[\cite{ai2026}]\label{Cor1}
	Let $r = (1+k')\ell$ and $r'=(1+k)\ell$. Let $\mathcal {F}\subseteq \binom{[n]}{k}$ and $\mathcal{G}\subseteq \binom{[n]}{k'}$ be $\ell$-weakly cross $t$-intersecting. Suppose that $\mathcal{F}$ contains a sunflower $\mathcal{S}(k,t,r)$ as a subfamily and that $\mathcal G$ contains a sunflower $S(k',t,r')$ as a subfamily respectively. Then $|\mathcal{F}|\cdot |\mathcal{G}| \leq \binom{n-t}{k-t}\binom{n-t}{k'-t}$.
\end{Corollary}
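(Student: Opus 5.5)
The statement to prove is Corollary~\ref{Cor1}, which I would derive directly from Lemma~\ref{setsunflower}, applied twice.

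First apply Lemma~\ref{setsunflower} to the pair $(\mathcal{F},\mathcal{G})$. The family $\mathcal{F}$ contains a sunflower $\mathcal{S}(k,t,r)$ with $r=(1+k')\ell$; let its kernel be $K$, with $|K|=t$. The lemma then says every member of $\mathcal{G}$ contains $K$, so $|\mathcal{G}|\le\binom{n-t}{k'-t}$.

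Next use the symmetry of the definition. The condition $\sum_{1\le i,j\le\ell}|F_i\cap G_j|\ge \ell^2t-\ell+1$ is symmetric in the roles of the two families, because $|F_i\cap G_j|=|G_j\cap F_i|$ and the double sum runs over all pairs. Hence $\mathcal{G}\subseteq\binom{[n]}{k'}$ and $\mathcal{F}\subseteq\binom{[n]}{k}$ are $\ell$-weakly cross $t$-intersecting in the reversed order.

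Lemma~\ref{setsunflower} is stated for arbitrary uniformities, so I would apply it with the roles of $(k,\mathcal{F})$ and $(k',\mathcal{G})$ swapped. Its hypothesis then needs a sunflower $\mathcal{S}(k',t,(1+k)\ell)=\mathcal{S}(k',t,r')$ inside $\mathcal{G}$, which we are given. The conclusion is that every member of $\mathcal{F}$ contains the kernel $K'$ of that sunflower, so $|\mathcal{F}|\le\binom{n-t}{k-t}$.

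Multiplying the two bounds gives $|\mathcal{F}|\cdot|\mathcal{G}|\le\binom{n-t}{k-t}\binom{n-t}{k'-t}$. (One can also show $K=K'$, since $\mathcal{G}$ then lies in the star of $K$ and also contains a sunflower with kernel $K'$ and many petals, but the size bound does not need this.)

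The only step needing care is the role swap. Lemma~\ref{setsunflower} does not assume $k\ge k'$, and its petal count $(1+k')\ell$ depends on the uniformity of the \emph{other} family. After swapping, that count becomes $(1+k)\ell=r'$, which matches the hypothesis exactly. So there is no real obstacle.
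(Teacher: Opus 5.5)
Your proposal is correct and follows the paper's own argument: the paper obtains Corollary~\ref{Cor1} as an immediate consequence of Lemma~\ref{setsunflower}. It applies the lemma once to $(\mathcal{F},\mathcal{G})$ to get $|\mathcal{G}|\le\binom{n-t}{k'-t}$, and once with the roles swapped (the petal count becoming $(1+k)\ell=r'$) to get $|\mathcal{F}|\le\binom{n-t}{k-t}$. Your checks that the weakly cross $t$-intersecting condition is symmetric and that the lemma assumes nothing like $k\ge k'$ are exactly what the paper's one-line ``by symmetry'' remark leaves implicit.
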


The conclusion of Corollary \ref{Cor1} attains the upper bound of Theorem~\ref{Set}. Therefore, we can assume that either $\mathcal{F}$ does not contain $\mathcal{S}(k,t,r)$ or $G$ does not contain $\mathcal{S}(k',t,r')$.

\begin{Lemma}\label{Nsunflowerset}
	Let $\mathcal {F}\subseteq \binom{[n]}{k}$ and $\mathcal{G}\subseteq \binom{[n]}{k'}$ be $\ell$-weakly cross $t$-intersecting. Suppose that $\mathcal{F}$ does not contain $\mathcal{S}(k,t,r)$ as a subfamily, where $r=(1+k')\ell$. Then $|\mathcal{F}|\cdot |\mathcal{G}|<\binom{n-t}{k-t}\binom{n-t}{k'-t}$.
\end{Lemma}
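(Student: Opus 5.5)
The plan is to bound $|\mathcal{G}|$ crudely by a \emph{covering} argument, to bound $|\mathcal{F}|$ sharply using the absence of the sunflower $\mathcal{S}(k,t,r)$, and then to multiply the two bounds and invoke Lemma \ref{setupperbound2}. The target bound for $|\mathcal{F}|$ is the denominator appearing there,
\[
D:=\left(\binom{\ell}{2}\binom{k'-1}{t}+2\right)(\ell-1)+\left(\binom{\ell}{2}\binom{2k'}{t+1}+\ell\binom{k'}{t+1}\right)\binom{n-t-1}{k-t-1}.
\]

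\textbf{Step 1 (bound for $|\mathcal{G}|$).} Assume $|\mathcal{F}|\ge \ell$, since otherwise $|\mathcal{F}|\cdot|\mathcal{G}|$ is trivially small for $n$ in the stated range. Fix distinct $F_1,\dots,F_\ell\in\mathcal{F}$. If $\ell$ distinct members $G_1,\dots,G_\ell$ of $\mathcal{G}$ all satisfy $|F_i\cap G_j|\le t-1$ for every $i$, then
\[
\sum_{i,j}|F_i\cap G_j|\le \ell^2(t-1)<\ell^2t-\ell+1,
\]
contradicting the $\ell$-weakly cross $t$-intersecting condition. Hence all but at most $\ell-1$ members of $\mathcal{G}$ meet some $F_i$ in at least $t$ elements. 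Counting $k'$-sets meeting a fixed $k$-set in exactly $h\ge t$ elements gives
\[
|\mathcal{G}|\le \ell\sum_{h=t}^{k'}\binom{k}{h}\binom{n-k}{k'-h}+\ell-1 .
\]
The symmetric argument gives the analogous bound with the roles of $\mathcal{F}$ and $\mathcal{G}$ exchanged. If $|\mathcal{F}|\le D$, then $|\mathcal{F}|\cdot|\mathcal{G}|<\binom{n-t}{k-t}\binom{n-t}{k'-t}$ follows directly from the first inequality of Lemma \ref{setupperbound2}, and the strict "$+\ell$" margin there yields the strict inequality. So everything reduces to proving $|\mathcal{F}|\le D$.

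\textbf{Step 2 (bound for $|\mathcal{F}|$).} The shape of $D$ suggests the following decomposition.

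\begin{itemize}
\item Fix distinct $G_1,\dots,G_\ell\in\mathcal{G}$. By the Step 1 argument with the roles reversed, at most $\ell-1$ members of $\mathcal{F}$ miss all the $G_j$ in the sense $|F\cap G_j|\le t-1$ for all $j$.
\item For the remaining $F$, split into two types:
\begin{itemize}
\item[(a)] $F$ contains a $(t+1)$-subset of some $G_j$, or of some union $G_i\cup G_j$. There are at most $\binom{\ell}{2}\binom{2k'}{t+1}+\ell\binom{k'}{t+1}$ such $(t+1)$-sets, each lying in at most $\binom{n-t-1}{k-t-1}$ members of $\mathcal{F}$. This produces the second term of $D$.
\item[(b)] $F$ meets the relevant $G_j$ in exactly a $t$-set $T\subseteq G_j$ and gains nothing more from $G_1\cup\dots\cup G_\ell$.
\end{itemize}
\end{itemize}

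For type (b), apply the weakly cross condition once more: fix $T$ and take $\ell$ members $F$ of this type together with suitable $G$'s. Every other pair $(F,G_i)$ contributes at most $t-1$, so the sum falls short of $\ell^2t-\ell+1$ unless the $F$'s are constrained. The constraint forces these $F$'s to share structure beyond $T$. Combining this with sunflower-freeness then limits the count to roughly $(\ell-1)\bigl(\binom{\ell}{2}\binom{k'-1}{t}+2\bigr)$, the first term of $D$. Here $\binom{k'-1}{t}$ should count the candidate kernels $T$ inside a $G_j$ after discarding one distinguished element.

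\textbf{Fallback for Step 2.} If the direct count fails, I would instead use the first inequality of Lemma \ref{setupperbound}, which looks tailored to the other regime. Namely, if $\mathcal{G}$ has all members through a common $m$-set, one can take $m$ maximal with $0\le m\le k'-t-1$ and bound $|\mathcal{G}|\le\binom{n-t-m}{k'-t-m}$. Then $|\mathcal{F}|$ is bounded by the left side of Lemma \ref{setupperbound}, which contains the terms $2\ell-2$ and
\[
(\ell+k'\ell-1)\binom{\ell}{2}\binom{k'-1}{t}\left(\binom{n-t}{k-t}-\binom{n-k}{k-t}+1\right).
\]
The factor $(\ell+k'\ell-1)=r-1$ is exactly where sunflower-freeness enters: among the $F$ containing a fixed kernel $T$, a maximal family whose members pairwise meet only in $T$ has at most $r-1$ members. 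Every other $F\supseteq T$ must then hit the union of their petals, which is what produces the difference $\binom{n-t}{k-t}-\binom{n-k}{k-t}$.

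\textbf{Main obstacle.} I expect the type (b) count, i.e.\ turning ``no $\mathcal{S}(k,t,r)$'' into a linear-in-$\binom{n-t-1}{k-t-1}$ bound on the members of $\mathcal{F}$ through each $t$-kernel $T$, to be the crux. My first attempt is the maximal-petal-family argument just described: pick a maximal subfamily of $\{F\in\mathcal{F}:T\subseteq F\}$ whose members pairwise intersect exactly in $T$. By sunflower-freeness it has at most $r-1$ members. Every other member containing $T$ then contains $T$ plus a point of one of these at most $r-1$ petals, so these members are bounded by $(r-1)(k-t)\binom{n-t-1}{k-t-1}$; this is the sunflower-maximality mechanism behind the bracketed term $\binom{n-t}{k-t}-\binom{n-k}{k-t}+1$ in Lemma \ref{setupperbound}. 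The numerical comparisons are then supplied by Lemmas \ref{setupperbound} and \ref{setupperbound2}.
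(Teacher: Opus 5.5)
Your Step~1 is correct, and it is exactly how the paper concludes its Case~1 and Subcase~2 via Lemma~\ref{setupperbound2}. The gap is the claim that everything reduces to $|\mathcal{F}|\le D$, together with the type~(b) count you offer for it. The paper's proof rests on a dichotomy you never make: whether $\mathcal{G}$ is $t$-intersecting.

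If some $G_1,G_2\in\mathcal{G}$ have $|G_1\cap G_2|\le t-1$, the paper puts \emph{both} among the fixed $\ell$ members. Then no $t$-set $T$ lies in both. So every type~(b) member $F$ with kernel $T$ has $|F\cap G_1|\le t-1$ or $|F\cap G_2|\le t-1$, and its other intersections have size at most $t$. Any $\ell$ such members give a sum of at most $\ell(t-1)+(\ell^2-\ell)t=\ell^2t-\ell$, so at most $\ell-1$ of them share a given $T$. Each $T$ lies in some $G_i\cap G_j$, so there are at most $\binom{\ell}{2}\binom{k'-1}{t}$ kernels. This gives $|\mathcal{F}|\le D$ with no use of sunflower-freeness.

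Your sketch contains none of this. The phrases ``every other pair contributes at most $t-1$'', ``share structure beyond $T$'' and ``combining with sunflower-freeness'' are not an argument. For an arbitrary choice of $G_1,\dots,G_\ell$, a bound on type~(b) independent of $n$ is false.

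When $\mathcal{G}$ is $t$-intersecting, type~(b) can genuinely be large. Suppose every member of $\mathcal{G}$ contains a fixed $t$-set $T$. Then the weakly cross condition places no restriction on members $F\supseteq T$. The family $\{F\supseteq T\colon F\cap Y\neq\emptyset\}$ with $|Y|=r-1$ and $Y\cap T=\emptyset$ has no $\mathcal{S}(k,t,r)$, yet it has on the order of $(r-1)\binom{n-t-1}{k-t-1}$ type~(b) members. Your own ``main obstacle'' paragraph in effect concedes this: it bounds type~(b) linearly in $\binom{n-t-1}{k-t-1}$, contradicting the constant bound claimed earlier. The resulting estimate for $|\mathcal{F}|$ exceeds $D$, so Lemma~\ref{setupperbound2} cannot be invoked.

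The paper handles this case by pairing the bounds differently. Theorem~\ref{set-EKR} gives $|\mathcal{G}|\le\binom{n-t}{k'-t}$. The sunflower-maximality count together with Lemma~\ref{setupperbound} (take $m=0$) gives $|\mathcal{F}|<\binom{n-t}{k-t}$. Your fallback gestures at this, but its trigger (members of $\mathcal{G}$ passing through a common set) is the wrong condition. A $t$-intersecting family need not have a common $t$-set, and what bounds $|\mathcal{G}|$ is $t$-intersection via EKR. Without the split into $t$-intersecting and non-$t$-intersecting $\mathcal{G}$, your two routes are not shown to cover all cases.

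A case-free repair also exists. Applying your per-kernel count to every $t$-subset of every $G_j$ gives $|\mathcal{F}|\le\ell-1+\ell\binom{k'}{t}(r-1)\bigl(\binom{n-t}{k-t}-\binom{n-k}{k-t}+1\bigr)$ in all cases. Combined with Step~1, this beats the target for $n$ large enough. That route, however, needs a numerical inequality not in the paper, which you would have to verify in the stated range of $n$.

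Separately, the case $|\mathcal{F}|<\ell$ is not ``trivially small''. The condition is then vacuous and $\mathcal{G}=\binom{[n]}{k'}$ is allowed; for $k=k'=3$, $t=2$, $\ell=2$ this gives $\binom{n}{3}>(n-2)^2$. The paper tacitly assumes both families have at least $\ell$ members. You should state that assumption explicitly rather than dismiss the case.
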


\begin{proof} 
We proceed by contradiction. Assume  $|\mathcal{F}|\cdot |\mathcal{G}|\geq \binom{n-t}{k-t}\binom{n-t}{k'-t},$
and we split the proof into two cases. 
	
	\paragraph{Case $1$:} There exist $\ell$ members $G_{1},G_{2},\ldots,G_{\ell}$ in $\mathcal{G}$ such that $|G_{i}\cap G_{j}|\leq t-1$ for any $i\neq j\in [\ell]$. Then we choose such $\ell$ subsets.
	
	By letting $\mathcal{E} = \{F\in \mathcal{F}\colon |G_{i}\cap F|\leq t-1 \text{ for every } i\in [\ell]\}$, we claim that $|\mathcal{E}|\leq \ell-1$.
	Otherwise, if there exist $\ell$ members $F_{1},F_{2},\ldots,F_{\ell}\in \mathcal{F}$ such that $|G_{i}\cap F_{j}|\leq t-1$ for every $i,j\in [\ell]$, then
	$$\sum_{1\leq i,j\leq \ell}|G_{i}\cap F_{j}|\leq \ell^{2}(t-1)<\ell^{2}t-\ell+1,$$
	which contradicts the assumption that $\mathcal{F}$ and $\mathcal{G}$ are $\ell$-weakly cross $t$-intersecting. 
	
	By the definition of $\mathcal{E}$, for every element $F$ in $\mathcal{F}\setminus \mathcal{E}$, there exists at least one $i \in [\ell]$ such that $|G_{i}\cap F|\geq t$. 
	Let $$\mathcal{E}_{1}=\{F\in \mathcal{F}\colon |G_{i}\cap F|=t \text{ for some } i\in [\ell], \text{ and } |G_{j}\cap F|\leq t -1\text{ for each } j\in [\ell]\backslash \{i\} \},$$
	$$\mathcal{E}_{2} = \bigl\{ F \in \mathcal{F} \colon |G_{i} \cap F| \leq t \text{ for all } i \in [\ell], \text{ and } \bigl| \{ i \in [\ell] : |G_{i} \cap F| = t \} \bigr| \ge 2 \bigr\},$$
	$$\mathcal{E}_{3}=\{F\in \mathcal{F}\colon |G_{i}\cap F|\geq t+1 \text{ for some } i\in [\ell]\}.$$
	
	Claim: $|\mathcal{E}_{1}|\leq \ell-1$.
	
	Proof of claim: We proceed by contradiction. Assume $|\mathcal{E}_{1}|\geq \ell$ and choose $\ell$ distinct elements $F_{1},F_{2},\ldots, F_{\ell}\in\mathcal{E}_{1}$. Then $$\sum_{1\leq i,j\leq \ell}|G_{i}\cap F_{j}|\leq \ell(\ell-1)(t-1)+\ell t<\ell^{2}t-\ell+1,$$
which contradicts the assumption that $\mathcal{F}$ and $\mathcal{G}$ are $\ell$-weakly cross $t$-intersecting. 
	
	Next, we estimate the sizes of $\mathcal{E}_{2} $ and $\mathcal{E}_{3}$.
	
	The members in $\mathcal{E}_{2}$ can be classified into two types. The first class consists of all $k$-sets $F$ whose intersections with those $G_i$ among $G_1, G_2, \ldots, G_\ell$ that satisfy $|F\cap G_i| = t$ are the same $t$-set, say $T$. The second class consists of all $k$-sets whose intersections with two subsets $G_{i}$ and $G_{j}$ among $G_1, G_2, \ldots, G_{\ell}$ are two distinct $t$-sets. For the first class, the number of such subsets in $\mathcal{E}_2$ is $0$ (since $F\cap G_i = F\cap G_j = T$ implies $T\subseteq G_i\cap G_j$, contradicting the choice of $G_{1},G_{2},\ldots,G_{\ell}$). 
	For the second class, it is easy to see that if a $k$-set $F$ has intersections with $G_{i}$ and $G_{j}$ that are distinct $t$-sets, then the size of its intersection with $G_{i}\cup G_{j}$ is at least $t+1$. Note that $|G_{i}\cup G_{j}|\leq 2k'$ since $G_{i}$ and $G_{j}$ are $k'$-sets. Therefore, we conclude that $$\mathcal{E}_{2}\subseteq \bigcup_{1\leq i<j\leq \ell}(\bigcup_{A\subseteq G_{i}\cup G_{j},|A|=t+1}\mathcal{F}(A)),$$
where $\mathcal{F}(A)=\{F\in \binom{[n]}{k}\colon A\subset F\}$. Then
$$|\mathcal{E}_{2}|\leq \binom{\ell}{2}\binom{2k'}{t+1} \binom{n-t-1}{k-t-1}.$$

	It is easy to see that $$\mathcal{E}_{3}\subseteq \bigcup_{i=1}^{\ell}(\bigcup_{A\subseteq G_{i},|A|=t+1}\mathcal{F}(A)).$$
    Therefore, $$|\mathcal{E}_{3}|\leq \ell \binom{k'}{t+1}\binom{n-t-1}{k-t-1}.$$
	
	Since $\mathcal{F}\subseteq \mathcal{E}\cup \mathcal{E}_{1}\cup \mathcal{E}_{2}\cup \mathcal{E}_{3}$, we have 
	$$|\mathcal{F}|\leq 2\ell-2+\left[\binom{\ell}{2}\binom{2k'}{t+1}+\ell \binom{k'}{t+1}\right]\binom{n-t-1}{k-t-1}.$$
	By  assumption $|\mathcal{F}|\cdot |\mathcal{G}| \geq \binom{n-t}{k-t}\binom{n-t}{k'-t}$ and Lemma~\ref{setupperbound2},
	we have  $$|\mathcal{G}| \geq \ell\sum_{h=t}^{k'}\binom{k}{h}\binom{n-k}{k'-h}+\ell.$$
	
	To reach a contradiction, pick $\ell$ distinct members $F_{1},F_{2},\ldots,F_{\ell}$ of $\mathcal{F}$.  
	For each fixed $j \in [\ell]$, the number of $k'$-sets intersecting $F_j$ in at least $t$ elements is at most  
	$\sum_{h=t}^{k'}\binom{k}{h}\binom{n-k}{k'-h}$. Hence there exist $\ell$ distinct members $G_{1},G_{2},\ldots,G_{\ell}\in \mathcal{G}$ such that $|F_i \cap G_j| \leq t-1$ for all $i,j \in [\ell]$. Then these $\ell$ distinct members in $\mathcal{G}$ satisfy
	$$\sum_{1\leq i,j\leq \ell} |F_i \cap G_j| \leq \ell^{2}(t-1) < \ell^{2}t -\ell+1,$$
	which contradicts the $\ell$-weakly cross $t$-intersecting property of $\mathcal{F}$ and $\mathcal{G}$.

	\paragraph{Case $2$:}  For any $\ell$ members $G_{1}, G_{2}, \ldots, G_{\ell}$ in $\mathcal{G}$, there exists at least one pair $\{i, j\} \subset [\ell]$ such that $|G_{i} \cap G_{j}| \geq t$. We further divide the discussion into two subcases.
	
	\paragraph{Subcase $1$:} $|G_{i}\cap G_{j}|\geq t$ holds for any two sets $G_{i},G_{j}\in \mathcal{G}$. Then we choose arbitrary $G_{1},G_{2},\ldots,G_{\ell}$ such that $|G_{i}\cap G_{j}|= t+m$, where $0\leq m\leq k'-t-1$.
	
	Similar to the proof of Case 1, we obtain $|\mathcal{E}| \leq \ell-1$, $|\mathcal{E}_1| \leq \ell-1$, and
	$$|\mathcal{E}_3| \leq \ell \binom{k'}{t+1}\cdot \binom{n-t-1}{k-t-1}.$$
	It remains to estimate $|\mathcal{E}_2|$.
	
	Note that every member $F$ of the first type satisfies the following property: whenever the size of its intersection with $G_i$ equals $t$, the intersection $F \cap G_i$ must be the same $t$-subset, say $T$. Since $\mathcal{F}$ does not contain an $\mathcal{S}(k,t,r)$ as a subfamily, the number of $k$-subsets $F \in \mathcal{F}$ such that any two of them intersect exactly in $T$ is at most $r-1$.  
	Let $\{F_{1}, F_{2}, \dots, F_{x}\}$ be a maximal collection of such subsets (i.e., $F_{i} \cap F_{j} = T$ for all $i \neq j$). Then any other $F\in \mathcal{E}_2$ of the first type such that $F \cap G_{i}=T$ for some $i$ must intersect at least one of $F_{1}, F_{2}, \dots, F_{x}$ in greater than $t$ points. Since the number of subsets in $\mathcal{E}_2$ of the first type such that $F \cap G_{i} = T$ for some $i$ and $|F\cap F_{j}|>t$ is at most  $\binom{n-t}{k-t}-\binom{n-k}{k-t}$, the number of subsets of the first type in $\mathcal{E}_2$ such that $F \cap G_{i} = T$ for some $i$ is less than $(r-1)\left(\binom{n-t}{k-t}-\binom{n-k}{k-t}+1 \right)$. Notice that $T$ is a subset of the intersection of some two $k'$-sets among $G_{1},\dots,G_{\ell}$, thus the number of such $T$ is at most $\binom{\ell}{2}\binom{k'-1}{t}$. Hence the number of subsets in $\mathcal{E}_2$ of the first type is less than
$$(r-1)\binom{\ell}{2}\binom{k'-1}{t}\left( \binom{n-t}{k-t}-\binom{n-k}{k-t} + 1 \right).$$
	
	For the second type, it is easy to see that if a $k$-set $F$ has intersections with $G_{i}$ and $G_{j}$ that are distinct $t$-subset, then the size of its intersection with $G_{i}\cup G_{j}$ is at least $t+1$. Note that $|G_{i}\cup G_{j}|\leq 2k'$ since $G_{i}$ and $G_{j}$ are $k'$-sets. Therefore, we conclude that $$|\mathcal{E}_{2}|\leq (r-1)\binom{\ell}{2}\binom{k'-1}{t}\left(\binom{n-t}{k-t}-\binom{n-k}{k-t} + 1 \right)+\binom{\ell}{2}\binom{2k'}{t+1} \binom{n-t-1}{k-t-1}.$$

	Since $\mathcal{F}\subseteq \mathcal{E}\cup \mathcal{E}_{1}\cup \mathcal{E}_{2}\cup \mathcal{E}_{3}$, we have 	
	\[
	\begin{aligned}
		|\mathcal{F}| \leq &\; 2\ell-2 + (r-1)\binom{\ell}{2}\binom{k'-1}{t}\left( \binom{n-t}{k-t}-\binom{n-k}{k-t} + 1 \right)\\
		&+ \left( \binom{\ell}{2}\binom{2k'}{t+1} + \ell \binom{k'}{t+1} \right) \binom{n-t-1}{k-t-1}.
	\end{aligned}
	\]
	
	By Theorem~\ref{set-EKR} and Lemma~\ref{setupperbound}, we have  $|\mathcal{G}|\leq \binom{n-t-m}{k'-t-m}$ and 
	\[
	\begin{aligned}
		|\mathcal{F}||\mathcal{G}| 
		&\leq \Bigg[2\ell-2+(r-1)\binom{\ell}{2}\binom{k'-1}{t}\left( \binom{n-t}{k-t}-\binom{n-k}{k-t} + 1 \right) \\
		&+ \left(\binom{\ell}{2}\binom{2k'}{t+1}+\ell \binom{k'}{t+1}\right)\binom{n-t-1}{k-t-1}\Bigg]\binom{n-t-m}{k'-t-m} \\
		&< \binom{n-t}{k'-t}\binom{n-t}{k-t},
	\end{aligned}
	\] 
	which contradicts the initial assumption that the product exceeds this bound.  
	
	\paragraph{Subcase $2$:} There exist two members $G_{1}, G_{2} \in \mathcal{G}$ such that $|G_{1} \cap G_{2}| \leq t-1$. Then we choose $\ell$ members $G_{1}, G_{2}, \ldots, G_{\ell}$ in $\mathcal{G}$ that contain $G_{1}$ and $G_{2}$.

	Similar to the proof of Case 1, we obtain $|\mathcal{E}| \leq \ell-1$, $|\mathcal{E}_1| \leq \ell-1$, and
	$$|\mathcal{E}_3| \leq \ell \binom{k'}{t+1} \binom{n-t-1}{k-t-1}.$$
	It remains to estimate $|\mathcal{E}_2|$.
	
	For the first type, we define a family $$\mathcal{E}_{2}^{I}(T)=\{F\in \mathcal{E}_{2}\colon F\cap G_{j}=T\ \text{whenever}\ |F\cap G_{j}|=t\},$$ for any $t$-subset $T\subset G_{i}$. From the choice of $G_{1}$ and $G_{2}$, we know that at least one of $F\cap G_{1},F\cap G_{2}$ is not equal to $T$. 
	
	Claim: If $\mathcal{E}_{2}^{I}(T)\neq \emptyset$, then $|\mathcal{E}_{2}^{I}(T)|\leq \ell-1$.
	
	Proof of claim: Assume that $\mathcal{E}_{2}^{I}(T)\neq \emptyset$. Since for each $F \in \mathcal{E}_2$ we have $F \cap G_1 \neq T$ or $F \cap G_2 \neq T$, and $|F \cap G_1| \leq t$, $|F \cap G_2| \leq t$. It follows that for any $F \in \mathcal{E}_2^I(T)$, $|F \cap G_1|\leq t-1$ or $|F \cap G_2| \leq t-1$. If $|\mathcal{E}_2^{I}(T)| \geq \ell$, then we may choose any $\ell$ members $F_1, F_2, \ldots, F_\ell$ in $\mathcal{E}_2^{I}(T)$. These $\ell$ members in $\mathcal{E}_2^{I}(T)$ satisfy
	\[
	\sum_{1 \leq i, j \leq \ell}|G_i \cap F_j| \leq \ell(t-1) + (\ell^2 - \ell)t < \ell^2 t - \ell + 1,
	\]
	which contradicts the assumption.

Notice that $T$ must be a subset of the intersection of some two $k'$-sets among $G_1,\dots,G_\ell$ if $\mathcal{E}_{2}^{I}(T)\neq \emptyset$, thus the number of such $T$ is less than $\binom{\ell}{2}\binom{k'-1}{t}$. Hence the number of subsets in $\mathcal{E}_2$ of the first type is less than $\binom{\ell}{2}\binom{k'-1}{t}(\ell-1)$.

	For the second type, it is easy to see that if a $k$-set $F$ has intersections with $G_{i}$ and $G_{j}$ that are distinct $t$-subsets, then the size of its intersection with $G_{i}\cup G_{j}$ is at least $t+1$. Note that $|G_{i}\cup G_{j}|\leq 2k'$ since $G_{i}$ and $G_{j}$ are $k'$-sets. Therefore, we conclude that
	\[
	|\mathcal{E}_{2}|\leq \binom{\ell}{2}\binom{k'-1}{t}(\ell-1)+\binom{\ell}{2}\binom{2k'}{t+1} \binom{n-t-1}{k-t-1}.
	\]
	
	Since $\mathcal{F}\subseteq \mathcal{E}\cup \mathcal{E}_{1}\cup \mathcal{E}_{2}\cup \mathcal{E}_{3}$, we have 
	\[	
	|\mathcal{F}|\leq \left(\binom{\ell}{2}\binom{k'-1}{t}+2\right)(\ell-1)+\left(\binom{\ell}{2}\binom{2k'}{t+1}+\ell\binom{k'}{t+1}\right)\binom{n-t-1}{k-t-1}.
	\]
	By assumption $|\mathcal{F}|\cdot|\mathcal{G}|\geq\binom{n-t}{k-t} \binom{n-t}{k'-t}$ and Lemma~\ref{setupperbound2}, we have  
	\[
	|\mathcal{G}| \geq \ell\sum_{h=t}^{k'}\binom{k}{h}\binom{n-k}{k'-h}+\ell.
	\]
	a similar argument as in Case 1 yields the same contradiction.
	
	In summary, we have completed the proof.
	\qed
\end{proof}

If $\mathcal{G}$ does not contain a sunflower $\mathcal{S}(k',t,r')$, a similar argument shows that $|\mathcal{F}||\mathcal{G}|< \binom{n-t}{k-t}\binom{n-t}{k'-t}$, and thus the first part of Theorem~\ref{Set} is proved.

By Lemma~\ref{Nsunflowerset}, if $|\mathcal{F}||\mathcal{G}| = \binom{n-t}{k'-t}\binom{n-t}{k-t}$, then $\mathcal{F}$ and $\mathcal{G}$ must contain a sunflower $\mathcal{S}(k,t,r)$ and a sunflower $\mathcal{S}(k',t,r')$, respectively. Then, by Lemma~\ref{setsunflower}, every member $G\in\mathcal{G}$ contains the kernel $K$ of $\mathcal{S}(k,t,r)$ and every member $F\in \mathcal{F}$ contains the kernel $K'$ of $\mathcal{S}(k',t,r')$. If $K\neq K'$, since $n\geq  \frac{k^{2}\ell^{4}}{2}\binom{2k}{t+1}\binom{k}{t}+t$, we can choose $\ell$ subsets $F_{1},\ldots,F_{\ell}\in \mathcal{F}$ and  $\ell$ subsets $G_{1},\ldots,G_{\ell}\in \mathcal{G}$ such that $|F_{i}\cap G_{j}|\leq t-1$ holds for every $i,j\in [\ell]$. Then $\sum_{1 \leq i, j \leq \ell}|G_{i} \cap F_{j}| \leq \ell^2(t-1) < \ell^2 t-\ell+1,$ which contradicts the assumption that $\mathcal{F}$ and $\mathcal{G}$ are $\ell$-weakly cross $t$-intersecting families. Thus, $K=K'$ and the Theorem~\ref{Set} is proved.

\section{Proof of Theorem \ref{Main}}

For a positive integer $n$, let $V$ be an $n$ dimensional vector space over $\mathbb{F}_{q}$. Similar to the set version, the proof of the subspace version also utilizes the sunflower method.  A $k$-uniform sunflower is a family of $k$-dimensional subspaces in which all pairs of distinct subspaces have the same intersection (called the kernel, the remaining parts are called petals).  For positive integers $k, t, u$ with $t\leq k\leq n$, we denote by $\mathcal{S}_{q}(k,t,u)\subseteq \genfrac{[}{]}{0pt}{}{V}{k}$ a $k$-uniform sunflower with a kernel of dimension $t$ and $u$ petals.

For the proof of Theorem~\ref{Main}, we first need the following lemmas.

Since the structure of a subspace differs from that of a set, we need to prove the subspace version of Lemma~\ref{setsunflower}.

\begin{Lemma}\label{sunflower}
	Let $r=(\genfrac{[}{]}{0pt}{}{k'}{1}+1)\ell$. Let $\mathcal {F}\subseteq \genfrac{[}{]}{0pt}{}{V}{k}$ and $\mathcal{G}\subseteq \genfrac{[}{]}{0pt}{}{V}{k'}$ be $\ell$-weakly cross $t$-intersecting. Suppose that $\mathcal{F}$ contains a sunflower $\mathcal{S}_{q}(k,t,r)$ with the kernel $K$ as a subfamily. Then every member of $\mathcal{G}$ contains $K$, i.e., $|\mathcal{G}|\leq \genfrac{[}{]}{0pt}{}{n-t}{k'-t}.$
\end{Lemma}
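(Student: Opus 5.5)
Suppose some $G\in\mathcal{G}$ does not contain $K$; I would derive a contradiction. Let the sunflower be $F_1,\dots,F_r$ with $F_i\cap F_j=K$ for $i\neq j$, where $\dim K=t$. Since $K\not\subseteq G$, we have $\dim(G\cap K)\le t-1$.

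The key step is a subspace analogue of the pigeonhole fact that a $k'$-set meets at most $k'$ of the disjoint petals. Consider the quotient map $\pi\colon V\to V/K$. The images $\pi(F_1),\dots,\pi(F_r)$ are $(k-t)$-dimensional subspaces of $V/K$, and they pairwise intersect trivially, because $F_i\cap F_j=K$. Suppose $\dim(G\cap F_i)\ge t$. Then $G\cap F_i\not\subseteq K$: otherwise $G\cap F_i\subseteq G\cap K$, which has dimension at most $t-1$. Hence $\pi(G\cap F_i)$ is a nonzero subspace of $\pi(G)\cap\pi(F_i)$, so $\pi(G)\cap\pi(F_i)$ contains a nonzero vector, i.e.\ a $1$-dimensional subspace (a point) of the projective space of $\pi(G)$. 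Since the $\pi(F_i)$ pairwise meet trivially, distinct indices $i$ give distinct points. The space $\pi(G)$ has dimension at most $k'$, so it contains at most $\genfrac{[}{]}{0pt}{}{k'}{1}$ points. Therefore at most $\genfrac{[}{]}{0pt}{}{k'}{1}$ of the $F_i$ satisfy $\dim(G\cap F_i)\ge t$. This point-counting step is the one place where the subspace setting really differs from the set setting, and it is exactly why $r$ involves $\genfrac{[}{]}{0pt}{}{k'}{1}$ rather than $k'$.

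Next I would mimic the set argument of Lemma~\ref{setsunflower} (and the $\mathcal{E},\mathcal{E}_1$ counting in Lemma~\ref{Nsunflowerset}). Choose any $\ell-1$ further members $G_2,\dots,G_\ell\in\mathcal{G}$ distinct from $G_1:=G$. If $\mathcal{G}$ has fewer than $\ell$ members, the weak condition is vacuous; I would handle this degenerate case by the convention that $|\mathcal{G}|$ is small, or simply assume $|\mathcal{G}|\ge\ell$ as the paper implicitly does. Apply the previous step to each $G_j$ that misses $K$, and use the trivial bound of at most $\genfrac{[}{]}{0pt}{}{k'}{1}$ bad petals for $G_1$. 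I would then select petals as follows.

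Each $G_j$ with $j\ge 2$ can still meet many petals in dimension $\ge t$ (for instance, if $K\subseteq G_j$). So instead I aim for $\ell$ petals $F_{i_1},\dots,F_{i_\ell}$ such that $\dim(G_1\cap F_{i_s})\le t-1$ for all $s$. Their number is at least $r-\genfrac{[}{]}{0pt}{}{k'}{1}\ge \ell$, so such a choice exists. For the other pairs I use the trivial bound. Note that $\dim(G_j\cap F)\le k'$ in general, so the trivial bound must be controlled. The cleaner route is to show that each $G_j$ meets at most $\genfrac{[}{]}{0pt}{}{k'}{1}$ petals in dimension exceeding $t$. Indeed, such an intersection is not contained in $K$, and by the same quotient argument it yields a distinct point of $\pi(G_j)$. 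Consequently, discarding at most $\ell\genfrac{[}{]}{0pt}{}{k'}{1}$ petals leaves at least $r-\ell\genfrac{[}{]}{0pt}{}{k'}{1}=\ell$ petals with $\dim(G_j\cap F)\le t$ for every $j$ and $\dim(G_1\cap F)\le t-1$. Summing gives
$$\sum_{i,j}\dim(F_i\cap G_j)\le \ell(t-1)+(\ell^2-\ell)t=\ell^2t-\ell<\ell^2t-\ell+1,$$
which is the desired contradiction. Thus $K\subseteq G$ for every $G\in\mathcal{G}$, and counting the $k'$-subspaces containing a fixed $t$-space gives $|\mathcal{G}|\le\genfrac{[}{]}{0pt}{}{n-t}{k'-t}$.

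The main obstacle is making the deletion count match $r=(\genfrac{[}{]}{0pt}{}{k'}{1}+1)\ell$ exactly. Since $r-\ell\genfrac{[}{]}{0pt}{}{k'}{1}=\ell$, the budget works provided that for every $G_j$, including $G_1$, the petals to be discarded are those meeting $G_j$ in dimension $>t$, together with, for $G_1$, those meeting it in dimension $\ge t$. For $G_1$ both kinds are counted by the single bound of at most $\genfrac{[}{]}{0pt}{}{k'}{1}$ points of $\pi(G_1)$. Checking this distinct-points injection carefully, using trivial pairwise intersections in $V/K$, is the crux.
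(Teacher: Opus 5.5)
Your proof is correct and follows essentially the same route as the paper. For each chosen $G_j$ you bound the number of petals $F$ with $F\cap G_j\not\subseteq K$ by $\genfrac{[}{]}{0pt}{}{k'}{1}$, since distinct such petals yield distinct points because petals pairwise meet only in $K$. You then discard them and sum over the remaining $r-\ell\genfrac{[}{]}{0pt}{}{k'}{1}=\ell$ petals to get at most $\ell(t-1)+(\ell^2-\ell)t$. The quotient-space phrasing and your arbitrary choice of $G_2,\dots,G_\ell$ are cosmetic differences; the paper's case split on how many members of $\mathcal{G}$ contain $K$ is not needed. The tacit assumption $|\mathcal{G}|\ge\ell$ that you flag is made by the paper as well.
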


\begin{proof} We proceed by contradiction. Suppose that there exist a subspace $G_{\ell}\in \mathcal{G}$ that does not contain $K$. We choose $\ell$ distinct subspaces from $\mathcal{G}$ as follows. Denote by $\mathcal{G}(K)$ the set of all members of $\mathcal{G}$ that contain $K$. If $|\mathcal{G}(K)|=d\geq \ell-1$, then we choose $G_{\ell}$ and $\ell-1$ distinct subspaces $G_1,\dots,G_{\ell-1}\in\mathcal{G}(K)$. Otherwise, we choose $G_1,\dots,G_d\in \mathcal{G}(K)$ and $\ell-d$ subspaces $G_{d+1},\dots,G_\ell\in\mathcal{G}\backslash \mathcal{G}(K)$.
	
	Let $h = \min\{d, \ell-1\}$ and let $\mathcal{S}_0$ be the sunflower $\mathcal{S}_{q}(k,t,r)$ in $\mathcal{F}$. For each $i\in [h]$, since the intersection of the petals of a sunflower is trivial, there are at most $\genfrac{[}{]}{0pt}{}{k'}{1}-\genfrac{[}{]}{0pt}{}{t}{1}$ members of $\mathcal{S}_0$ that have a nonempty intersection with $G_i\setminus K$. Thus, after deleting such subspaces, we obtain a new sunflower $\mathcal{S}_1=\mathcal{S}_{q}(k,t,r')\subseteq \mathcal{S}_0$ of size at least $r'=r -h(\genfrac{[}{]}{0pt}{}{k'}{1}-\genfrac{[}{]}{0pt}{}{t}{1})$, where the intersection of each member with $G_i$ is exactly $K$. Similarly, for each $j\in \{h+1,\dots,\ell\}$, there are at most $\genfrac{[}{]}{0pt}{}{k'}{1}$ members of $\mathcal{S}_1$ that have a nonempty intersection with $G_j\setminus K$. Thus, after deleting such subspaces from $\mathcal{S}_1$, we obtain another sunflower $\mathcal{S}_2=\mathcal{S}_{q}(k,t,r'')\subseteq \mathcal{S}_1$ of size at least $r''=r-h(\genfrac{[}{]}{0pt}{}{k'}{1}-\genfrac{[}{]}{0pt}{}{t}{1})-(\ell - h)\genfrac{[}{]}{0pt}{}{k'}{1}\geq \ell$ such that for each $i\in [\ell]$, the intersection of each member of $\mathcal{S}_2$ with $G_i$ is a subspace of $K$. Since $G_{\ell}$ does not contain kernel $K$, $\dim(F\cap G_\ell) \leq t-1$ holds for each $F\in \mathcal{S}_2$. Thus, if we take $\ell$ distinct elements $F_1,\dots,F_\ell\in \mathcal{S}_2$, then $$\sum_{1\leq i,j\leq \ell}\dim(F_i\cap G_j) \leq \ell(\ell-1)t + \ell(t-1) < \ell^2 t - \ell+1,$$
which contradicts the $\ell$-weakly cross $t$-intersecting property of  $\mathcal{F}$ and $\mathcal{G}$.
 
Hence, each member of $\mathcal{G}$ contains $K$. The conclusion follows.\qed
\end{proof}

Since the roles of $\mathcal{F}$ and $\mathcal{F}'$ are symmetric, we obtain the following corollary as an immediate consequence of Lemma \ref{sunflower}.

\begin{Corollary}\label{Cor2}
	Let $r = (\genfrac{[}{]}{0pt}{}{k'}{1}+1)\ell$ and $r'=(\genfrac{[}{]}{0pt}{}{k}{1}+1)\ell$. Let $\mathcal {F}\subseteq \genfrac{[}{]}{0pt}{}{V}{k}$ and $\mathcal{G}\subseteq \genfrac{[}{]}{0pt}{}{V}{k'}$ be $\ell$-weakly cross $t$-intersecting. Suppose that $\mathcal{F}$ contains a $\mathcal{S}_{q}(k,t,r)$ as a subfamily and that $\mathcal G$ contains a $\mathcal{S}_q(k',t,r')$ as a subfamily. Then $|\mathcal{F}|\cdot |\mathcal{G}| \leq \genfrac{[}{]}{0pt}{}{n-t}{k-t}\genfrac{[}{]}{0pt}{}{n-t}{k'-t}$.
\end{Corollary}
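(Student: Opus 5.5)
The plan is to apply Lemma~\ref{sunflower} twice, once in each direction, and then count directly. First, $\mathcal{F}$ contains a sunflower $\mathcal{S}_{q}(k,t,r)$ with kernel $K$, where $r=(\genfrac{[}{]}{0pt}{}{k'}{1}+1)\ell$, and $\mathcal{F}$ and $\mathcal{G}$ are $\ell$-weakly cross $t$-intersecting. So Lemma~\ref{sunflower} applies directly and shows that every member of $\mathcal{G}$ contains the $t$-dimensional subspace $K$. The number of $k'$-dimensional subspaces of $V$ containing a fixed $t$-dimensional subspace equals the number of $(k'-t)$-dimensional subspaces of the quotient $V/K$, which has dimension $n-t$. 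Hence
\[
|\mathcal{G}|\leq \genfrac{[}{]}{0pt}{}{n-t}{k'-t}.
\]

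Second, I will apply the same lemma with the roles of the two families exchanged. The $\ell$-weakly cross $t$-intersecting condition is symmetric in $\mathcal{F}$ and $\mathcal{G}$, since $\dim(F_i\cap G_j)=\dim(G_j\cap F_i)$ and the double sum is unchanged. Nothing in the proof of Lemma~\ref{sunflower} used $k\geq k'$: the deletion argument only needs the sunflower to have at least $(\genfrac{[}{]}{0pt}{}{m}{1}+1)\ell$ petals, where $m$ is the dimension of the members of the other family. Applying it to $\mathcal{G}\subseteq \genfrac{[}{]}{0pt}{}{V}{k'}$, which contains $\mathcal{S}_q(k',t,r')$ with $r'=(\genfrac{[}{]}{0pt}{}{k}{1}+1)\ell$, and to $\mathcal{F}\subseteq\genfrac{[}{]}{0pt}{}{V}{k}$, I get that every member of $\mathcal{F}$ contains the kernel $K'$ of that sunflower. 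Therefore
\[
|\mathcal{F}|\leq \genfrac{[}{]}{0pt}{}{n-t}{k-t}.
\]

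Multiplying the two nonnegative bounds gives the claimed inequality $|\mathcal{F}|\cdot|\mathcal{G}|\leq \genfrac{[}{]}{0pt}{}{n-t}{k-t}\genfrac{[}{]}{0pt}{}{n-t}{k'-t}$.

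The only real point to check is the symmetric application of Lemma~\ref{sunflower}, which is stated with $\mathcal{F}$ in the sunflower role and $\mathcal{G}$ as the other family. I would verify this explicitly by rereading the deletion step with $k$ and $k'$ swapped. Each $G_i$ (now a member of $\mathcal{F}$, of dimension $k$) meets at most $\genfrac{[}{]}{0pt}{}{k}{1}$ petals nontrivially outside the kernel. Deleting these for all $\ell$ members still leaves at least $r'-\ell\genfrac{[}{]}{0pt}{}{k}{1}\geq \ell$ petals, so the final sum estimate $\ell(\ell-1)t+\ell(t-1)<\ell^2t-\ell+1$ goes through verbatim. No condition on $n$ is needed for this step.
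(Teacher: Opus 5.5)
Your proposal is correct and is the paper's argument: the paper obtains the corollary by applying Lemma~\ref{sunflower} once as stated and once with the roles of $\mathcal{F}$ and $\mathcal{G}$ exchanged, which is allowed because the $\ell$-weakly cross $t$-intersecting condition is symmetric. Multiplying $|\mathcal{F}|\leq\genfrac{[}{]}{0pt}{}{n-t}{k-t}$ and $|\mathcal{G}|\leq\genfrac{[}{]}{0pt}{}{n-t}{k'-t}$ then gives the bound. Your re-check of the deletion step with $k$ and $k'$ swapped, which leaves $r'-\ell\genfrac{[}{]}{0pt}{}{k}{1}=\ell$ petals, is exactly the verification the paper leaves implicit. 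It is also worth noting that both families contain sunflowers with at least $\ell$ members, so the lemma can always select $\ell$ distinct members from each family.
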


 The conclusion of Corollary \ref{Cor2} attains the upper bound of Theorem~\ref{Main}. Therefore, we can assume that either $\mathcal{F}$ does not contain $\mathcal{S}_{q}(k,t,r)$ or $G$ does not contain $\mathcal{S}_{q}(k',t,r')$.

\begin{Lemma}\label{Nsunflower}
	Let $\mathcal {F}\subseteq \genfrac{[}{]}{0pt}{}{V}{k}$ and $\mathcal{G}\subseteq \genfrac{[}{]}{0pt}{}{V}{k'}$ be $\ell$-weakly cross $t$-intersecting. Suppose that $\mathcal{F}$ does not contain $\mathcal{S}_{q}(k,t,r)$ as a subfamily, where $r = (\genfrac{[}{]}{0pt}{}{k'}{1}+1)\ell$. Then $|\mathcal{F}||\mathcal{G}|< \genfrac{[}{]}{0pt}{}{n-t}{k-t}\genfrac{[}{]}{0pt}{}{n-t}{k'-t}$.
\end{Lemma}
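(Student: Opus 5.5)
We argue by contradiction, mirroring the proof of Lemma~\ref{Nsunflowerset}: assume $|\mathcal{F}||\mathcal{G}|\geq \genfrac{[}{]}{0pt}{}{n-t}{k-t}\genfrac{[}{]}{0pt}{}{n-t}{k'-t}$. We fix $\ell$ members $G_1,\dots,G_\ell\in\mathcal{G}$ and partition $\mathcal{F}$ as $\mathcal{E}\cup\mathcal{E}_1\cup\mathcal{E}_2\cup\mathcal{E}_3$, with the subspace analogues of the set definitions ($\dim(F\cap G_i)$ in place of $|F\cap G_i|$). The arguments giving $|\mathcal{E}|\leq \ell-1$ and $|\mathcal{E}_1|\leq \ell-1$ transfer verbatim, since they only use the numerical sum condition. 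For $\mathcal{E}_3$, each $F\in\mathcal{E}_3$ contains a $(t+1)$-subspace of some $G_i$, so $|\mathcal{E}_3|\leq \ell\genfrac{[}{]}{0pt}{}{k'}{t+1}\genfrac{[}{]}{0pt}{}{n-t-1}{k-t-1}$.

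For the second type in $\mathcal{E}_2$, $F\cap G_i$ and $F\cap G_j$ are distinct $t$-spaces. Then $F\cap(G_i+G_j)$ contains $(F\cap G_i)+(F\cap G_j)$, which has dimension $\geq t+1$. Since $\dim(G_i+G_j)\leq 2k'$, this gives at most $\binom{\ell}{2}\genfrac{[}{]}{0pt}{}{2k'}{t+1}\genfrac{[}{]}{0pt}{}{n-t-1}{k-t-1}$ such $F$.

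We then follow the same three cases as in the set proof.

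\textbf{Case 1: pairwise $\dim(G_i\cap G_j)\leq t-1$.} The first type of $\mathcal{E}_2$ is empty. We get $|\mathcal{F}|\leq 2\ell-2+(\cdots)\genfrac{[}{]}{0pt}{}{n-t-1}{k-t-1}$, and Lemma~\ref{Upperbound2} forces $|\mathcal{G}|\geq \ell\sum_{h=t}^{k'}q^{(k-h)(k'-h)}\genfrac{[}{]}{0pt}{}{k}{h}\genfrac{[}{]}{0pt}{}{n-k}{k'-h}+\ell$. By Lemma~\ref{Spacenumber}, for fixed $F_1,\dots,F_\ell\in\mathcal{F}$ this exceeds the number of $k'$-spaces meeting some $F_j$ in dimension $\geq t$. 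Hence there are $\ell$ members $G_j$ with all $\dim(F_i\cap G_j)\leq t-1$, a contradiction. We need $|\mathcal{F}|\geq \ell$; otherwise the product bound fails trivially.

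\textbf{Subcase 2.1: $\mathcal{G}$ is $t$-intersecting.} We pick $G_1,\dots,G_\ell$ with some $\dim(G_i\cap G_j)=t+m$. By the Remark after Theorem~\ref{Cross} (or its $t+m$ version), $|\mathcal{G}|\leq\genfrac{[}{]}{0pt}{}{n-t-m}{k'-t-m}$. This needs a $(t+m)$-intersecting bound valid for our $n$, and we take $m$ as the minimum pairwise intersection dimension.

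The first type of $\mathcal{E}_2$ is handled with the no-sunflower hypothesis. For a fixed $t$-space $T$, a maximal family of members with pairwise intersection exactly $T$ has at most $r-1$ elements. Every other such $F\supseteq T$ meets one of them in dimension $>t$. The number of $k$-spaces through $T$ meeting a fixed $k$-space $F_j\supseteq T$ in dimension $>t$ is $\genfrac{[}{]}{0pt}{}{n-t}{k-t}-q^{(k-t)^2}\genfrac{[}{]}{0pt}{}{n-k}{k-t}$, by Lemma~\ref{Spacenumber} applied in $V/T$. The number of $T$ is at most $\binom{\ell}{2}\genfrac{[}{]}{0pt}{}{k'-1}{t}$; this is the same crude bound as the set case, which we would justify as $T$ lying in some $G_i\cap G_j$ of dimension $\leq k'-1$, with the count absorbed into the constant. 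With $r-1=\genfrac{[}{]}{0pt}{}{k'}{1}\ell+\ell-1$, the first inequality of Lemma~\ref{Upperbound} gives $|\mathcal{F}||\mathcal{G}|<\genfrac{[}{]}{0pt}{}{n-t}{k-t}\genfrac{[}{]}{0pt}{}{n-t}{k'-t}$.

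\textbf{Subcase 2.2: some $\dim(G_1\cap G_2)\leq t-1$.} We include $G_1,G_2$ among the chosen $G_i$. For each $T$, the claim $|\mathcal{E}_2^I(T)|\leq\ell-1$ goes through as in the set case, giving the bound $\bigl(\binom{\ell}{2}\genfrac{[}{]}{0pt}{}{k'-1}{t}+2\bigr)(\ell-1)+\cdots$. Then Lemma~\ref{Upperbound2} and the Case~1 counting argument finish the contradiction.

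\textbf{Main obstacle.} The delicate points are the subspace-specific counts: (i) the dimension argument that distinct $t$-intersections force a $(t+1)$-dimensional intersection with $G_i+G_j$; (ii) the exact count of $k$-spaces through $T$ meeting $F_j$ nontrivially beyond $T$, via the quotient $V/T$ and Lemma~\ref{Spacenumber}; and (iii) verifying that the $n$ in Theorem~\ref{Main} meets the hypotheses of Lemmas~\ref{Upperbound}, \ref{Upperbound2} and of Theorem~\ref{Cross}, which needs $n\geq 2k'+t+1$ and holds easily. I expect (ii), together with applying the $(t+m)$-intersecting bound in Subcase 2.1, to need the most care.
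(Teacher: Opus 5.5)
Your proposal is essentially the paper's proof. It uses the same partition of $\mathcal{F}$ into $\mathcal{E},\mathcal{E}_1,\mathcal{E}_2,\mathcal{E}_3$ relative to $G_1,\dots,G_\ell$ and the same three cases. It counts the first-type members of $\mathcal{E}_2$ via the no-sunflower hypothesis and Lemma~\ref{Spacenumber} in $V/T$. It closes the same way: Lemma~\ref{Upperbound} with the Remark after Theorem~\ref{Cross}, or Lemma~\ref{Upperbound2} plus the $\ell\times\ell$ counting contradiction.

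The step you flag as delicate in Subcase 2.1 is harmless. The proof of Lemma~\ref{Upperbound} shows its left side is below $\genfrac{[}{]}{0pt}{}{n-t}{k-t}$. So $m=0$ and the plain $t$-intersecting bound $|\mathcal{G}|\le\genfrac{[}{]}{0pt}{}{n-t}{k'-t}$ already suffice.

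The one statement that is actually wrong is your aside that if $|\mathcal{F}|<\ell$ ``the product bound fails trivially''. When $|\mathcal{F}|<\ell$ (or $|\mathcal{G}|<\ell$), the $\ell$-weakly cross $t$-intersecting condition is vacuous, and the lemma as literally stated is false. For example, take $t=2$, $k=k'=3$, $\ell=2$, $\mathcal{F}=\{F_0\}$ and $\mathcal{G}=\genfrac{[}{]}{0pt}{}{V}{3}$. Then $\mathcal{F}$ contains no $\mathcal{S}_q(3,2,r)$, yet $|\mathcal{F}||\mathcal{G}|=\genfrac{[}{]}{0pt}{}{n}{3}>q^{3(n-3)}>q^{2(n-2)}>\genfrac{[}{]}{0pt}{}{n-2}{1}^{2}$.

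The paper's proof tacitly assumes $|\mathcal{F}|,|\mathcal{G}|\geq\ell$. It needs this to choose $G_1,\dots,G_\ell$ at the start of every case and $F_1,\dots,F_\ell$ in the final counting step. Your argument also silently needs $|\mathcal{G}|\geq\ell$, which you never mention. So do not call the small case trivial; state $|\mathcal{F}|,|\mathcal{G}|\geq\ell$ as a standing hypothesis. Under it, your argument is correct and coincides with the paper's.
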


\begin{proof} We proceed by contradiction. Assume  $|\mathcal{F}|\cdot |\mathcal{G}|\geq \genfrac{[}{]}{0pt}{}{n-t}{k-t}\genfrac{[}{]}{0pt}{}{n-t}{k'-t},$ and we split the proof into two cases. 
	
	\paragraph{Case $1$:} There exist $\ell$ subspaces $G_{1},G_{2},\ldots,G_{\ell}$ in $\mathcal{G}$ such that $\dim(G_{i}\cap G_{j})\leq t-1$ for any $i\neq j\in [\ell]$. Then we choose such $\ell$ subspaces.
	
	By letting $\mathcal{E} = \{F\in \mathcal{F}\colon \dim(G_{i}\cap F)\leq t -1 \text{ for every } i\in [\ell]\}$, we claim that $|\mathcal{E}|\leq \ell-1$. Otherwise, if there exist $\ell$ members $F_{1},F_{2},\ldots,F_{\ell}\in \mathcal{F}$ such that $\dim(G_{i}\cap F_{j})\leq t-1$ for every $i,j\in [\ell]$, then
$$\sum_{1\leq i,j\leq \ell}\dim(G_{i}\cap F_{j})\leq \ell^{2}(t-1)<\ell^{2}t-\ell+1,$$ which contradicts the assumption that $\mathcal{F}$ and $\mathcal{G}$ are $\ell$-weakly cross $t$-intersecting.
	
	By the definition of $\mathcal{E}$, for every element $F$ in $\mathcal{F}\setminus \mathcal{E}$, there exists at least one $i \in [\ell]$ such that $\dim(G_{i}\cap F)\geq t$. 
	Let $$\mathcal{E}_{1}=\{F\in \mathcal{F}\colon \dim(G_{i}\cap F)=t \text{ for some } i\in [\ell],\dim(G_{j}\cap F)\leq t - 1 \text{ for each } j\in [\ell]\backslash \{i\} \},$$
	$$\mathcal{E}_{2} = \bigl\{ F \in \mathcal{F} \colon \dim(G_{i} \cap F) \le t \text{ for all } i \in [\ell], \text{ and } \bigl| \{ i \in [\ell] : \dim(G_{i} \cap F) = t \} \bigr| \ge 2 \bigr\},$$
	$$\mathcal{E}_{3}=\{F\in \mathcal{F}\colon \dim(G_{i}\cap F)\geq t+1 \text{ for some } i\in [\ell]\}.$$
	
	Claim: $|\mathcal{E}_{1}|\leq \ell-1$.
	
	Proof of claim: We proceed by contradiction. Assume $|\mathcal{E}_{1}|\geq \ell$ and choose $\ell$ distinct elements $F_{1},F_{2},\ldots,F_{\ell}\in\mathcal{E}_{1}$. Then $$\sum_{1\leq i,j\leq \ell}\dim(G_{i}\cap F_{j})\leq \ell(\ell-1)(t-1)+\ell t<\ell^{2}t-\ell+1,$$
which contradicts the assumption that $\mathcal{F}$ and $\mathcal{G}$ are $\ell$-weakly cross $t$-intersecting. 
	
	Next, we estimate the sizes of $\mathcal{E}_{2} $ and $\mathcal{E}_{3}$.
	
	The subspaces in $\mathcal{E}_{2}$ can be classified into two classes. The first class consists of all $k$-dimensional subspaces $F$ whose intersections with those $G_{i}$ among $G_{1}, G_{2}, \ldots, G_{\ell}$ that satisfy $\dim(F\cap G_{i}) = t$ are the same $t$-dimensional subspace, say $T$. The second class consists of all $k$-dimensional subspaces whose intersections with two subspaces $G_{i}$ and $G_{j}$ among $G_{1}, G_{2}, \ldots, G_{\ell}$ are two distinct $t$-dimensional subspaces. For the first type, the number of such subspaces in $\mathcal{E}_2$ is $0$ (since $F\cap G_{i} = F\cap G_{j}=T$ implies $T\subseteq G_{i}\cap G_{j}$, contradicting the choice of $G_{1},G_{2},\ldots,G_{\ell}$). 
	For the second type, it is easy to see that if a $k$-dimensional subspace $F$ has intersections with $G_{i}$ and $G_{j}$ that are distinct $t$-dimensional subspaces, then the dimension of its intersection with $G_{i}+G_{j}$ is at least $t+1$. Note that $\dim(G_{i}+G_{j})\leq 2k'$ since $G_{i}$ and $G_{j}$ are $k'$-dimensional subspaces. Therefore, we conclude that 
	$$\mathcal{E}_{2}\subseteq \bigcup_{1\leq i<j\leq \ell}(\bigcup_{A\subseteq G_{i}+G_{j},\dim(A)=t+1}\mathcal{F}(A)),$$
	where $\mathcal{F}(A)=\{F\in \genfrac{[}{]}{0pt}{}{V}{k}\colon A\subset F\}$. By Lemma \ref{Spacenumber}, we have
      $$|\mathcal{E}_{2}|\leq \binom{\ell}{2}\genfrac{[}{]}{0pt}{}{2k'}{t+1}\cdot \genfrac{[}{]}{0pt}{}{n-t-1}{k-t-1}.$$
	It is easy to see that $$\mathcal{E}_{3}\subseteq \bigcup_{i=1}^{\ell}(\bigcup_{A\subseteq G_{i},\dim(A)=t+1}\mathcal{F}(A)).$$
	Therefore, we conclude that 
       $$|\mathcal{E}_{3}|\leq \ell \genfrac{[}{]}{0pt}{}{k'}{t+1}\cdot \genfrac{[}{]}{0pt}{}{n-t-1}{k-t-1}.$$
	
	Since $\mathcal{F}\subseteq \mathcal{E}\cup \mathcal{E}_{1}\cup \mathcal{E}_{2}\cup \mathcal{E}_{3}$, we have 
	$$|\mathcal{F}|\leq 2\ell-2+\left[\binom{\ell}{2}\genfrac{[}{]}{0pt}{}{2k'}{t+1}+\ell \genfrac{[}{]}{0pt}{}{k'}{t+1}\right]\genfrac{[}{]}{0pt}{}{n-t-1}{k-t-1}.$$
	By  assumption $|\mathcal{F}|\cdot |\mathcal{G}| \geq \genfrac{[}{]}{0pt}{}{n-t}{k-t}\genfrac{[}{]}{0pt}{}{n-t}{k'-t}$ and Lemma \ref{Upperbound2},
	we have  $$|\mathcal{G}| \geq \ell\sum_{h=t}^{k'} q^{(k-h)(k'-h)} \genfrac{[}{]}{0pt}{}{k}{h}\genfrac{[}{]}{0pt}{}{n-k}{k'-h}+\ell.$$
	
	To reach a contradiction, pick $\ell$ distinct subspaces $F_{1},F_{2},\ldots,F_{\ell}$ of $\mathcal{F}$.  
	For each fixed $j \in [\ell]$, the number of $k'$-dimensional subspaces intersecting $F_j$ in dimension at least $t$ is at most  
	$\sum_{h=t}^{k'} q^{(k-h)(k'-h)} \genfrac{[}{]}{0pt}{}{k}{h}\genfrac{[}{]}{0pt}{}{n-k}{k'-h}$ by Lemma \ref{Spacenumber}. Hence there exist $\ell$ distinct members $G_{1},G_{2},\ldots,G_{\ell}\in \mathcal{G}$ such that $\dim(F_i \cap G_j) \le t-1$ for all $i,j \in [\ell]$. Then these $\ell$ members in $\mathcal{G}$ satisfy $$\sum_{1\leq i,j\leq \ell} \dim(F_i \cap G_j) \leq \ell^{2}(t-1) < \ell^{2}t -\ell+1,$$ 
	which contradicts the $\ell$-weakly cross $t$-intersecting property of $\mathcal{F}$ and $\mathcal{G}$.
	
	\paragraph{Case $2$:}  For any $\ell$ subspaces $G_{1}, G_{2}, \ldots, G_{\ell}$ in $\mathcal{G}$, there exists at least one pair $\{i, j\}\subset [\ell]$ such that $\dim(G_{i} \cap G_{j}) \geq t$. We further divide the discussion into two subcases.
	
	\paragraph{Subcase $1$:}  $\dim(G_{i}\cap G_{j})\geq t$ for any two subspaces $G_{i},G_{j}\in \mathcal{G}$. Then we choose arbitrary $G_{1},G_{2},\ldots,G_{\ell}$ such that $\dim(G_{i}\cap G_{j})\geq t+m$ for all $i,j\in [\ell]$, where $0\leq m\leq k'-t-1$.
	
	Similar to the proof of Case 1, we obtain $|\mathcal{E}| \leq \ell-1$, $|\mathcal{E}_{1}| \leq \ell-1$, and
       $$|\mathcal{E}_3| \leq \ell \genfrac{[}{]}{0pt}{}{k'}{t+1}\cdot \genfrac{[}{]}{0pt}{}{n-t-1}{k-t-1}.$$
	It remains to estimate $|\mathcal{E}_{2}|$.

	Note that every subspace $F$ of the first type satisfies the following property: whenever the dimension of its intersection with $G_i$ equals $t$, then the intersection $F \cap G_i$ must be the $t$-subspace, say $T$. Since $\mathcal{F}$ does not contain a sunflower $\mathcal{S}_{q}(k,t,r)$ as a subfamily, the number of $k$-subspaces $F \in \mathcal{F}$ such that any two of them intersect exactly in $T$ is at most $r-1$.  
	Let $\{F_{1}, F_{2}, \dots, F_{x}\}$ be a maximal collection of such subspaces (i.e., $F_{i} \cap F_{j} = T$ for all $i \neq j$). Then each $F$ of other subspaces of the first-type satisfying $F \cap G_{i} = T$ for some $i$ must intersect at least one of $F_{1},F_{2},\dots,F_{x}$ in a subspace with dimension greater than $t$. By Lemma~\ref{Spacenumber}, the number of subspaces in $\mathcal{E}_{2}$ of the first type such that $F \cap G_{i}=T$ for some $i$ and $\dim (F\cap F_{j})>t$ is at most  $\genfrac{[}{]}{0pt}{}{n-t}{k-t}-q^{(k-t)^2}\genfrac{[}{]}{0pt}{}{n-k}{k-t}$, thus the number of subspaces in $\mathcal{E}_{2}$ of  first-type such that $F \cap G_{i} = T$ for some $i$ is less than $(r-1)\left( \genfrac{[}{]}{0pt}{}{n-t}{k-t}-q^{(k-t)^2}\genfrac{[}{]}{0pt}{}{n-k}{k-t}+1 \right)$. Notice that $T$ is a subspace of the intersection of some two subspaces among $G_{1},\dots,G_{\ell}$, thus the number of such $T$ is at most $\binom{\ell}{2}\genfrac{[}{]}{0pt}{}{k'-1}{t}$. Hence the number of  subspaces in $\mathcal{E}_{2}$ of the first-type is less than
$$(r-1)\binom{\ell}{2}\genfrac{[}{]}{0pt}{}{k'-1}{t}\left( \genfrac{[}{]}{0pt}{}{n-t}{k-t} - q^{(k-t)^2}\genfrac{[}{]}{0pt}{}{n-k}{k-t} + 1 \right).$$
	
	For the second type, it is easy to see that if a $k$-dimensional subspace $F$ has intersections with $G_{i}$ and $G_{j}$ that are distinct $t$-dimensional subspaces, then the dimension of its intersection with $G_{i}+G_{j}$ is at least $t+1$. Note that $\dim(G_{i}+G_{j})\leq 2k'$ since $G_{i}$ and $G_{j}$ are $k'$-dimensional subspaces. Therefore, we conclude that
	$$|\mathcal{E}_{2}|\leq (r-1)\binom{\ell}{2}\genfrac{[}{]}{0pt}{}{k'-1}{t}\left(\genfrac{[}{]}{0pt}{}{n-t}{k-t} - q^{(k-t)^{2}}\genfrac{[}{]}{0pt}{}{n-k}{k-t}+1\right)+\binom{\ell}{2}\genfrac{[}{]}{0pt}{}{2k'}{t+1}\cdot \genfrac{[}{]}{0pt}{}{n-t-1}{k-t-1}.$$
	
	Since $\mathcal{F}\subseteq \mathcal{E}\cup \mathcal{E}_{1}\cup \mathcal{E}_{2}\cup \mathcal{E}_{3}$, we have 
	
	\[
	\begin{aligned}
		|\mathcal{F}| &\leq 2\ell-2+(r-1)\binom{\ell}{2}\genfrac{[}{]}{0pt}{}{k'-1}{t}\left(\genfrac{[}{]}{0pt}{}{n-t}{k-t} - q^{(k-t)^{2}}\genfrac{[}{]}{0pt}{}{n-k}{k-t}+1\right)\\
		&+\Bigg( \binom{\ell}{2}\genfrac{[}{]}{0pt}{}{2k'}{t+1}+ \ell \genfrac{[}{]}{0pt}{}{k'}{t+1} \Bigg) \genfrac{[}{]}{0pt}{}{n-t-1}{k-t-1}.
	\end{aligned}
	\]
	
	By Theorem \ref{Cross} and Lemma \ref{Upperbound}, we have  $|\mathcal{G}|\leq \genfrac{[}{]}{0pt}{}{n-t-m}{k'-t-m}$ and 
	\[
	\begin{aligned}
		|\mathcal{F}||\mathcal{G}| 
		&\leq \Bigg[2\ell-2+ (r-1)\binom{\ell}{2}\genfrac{[}{]}{0pt}{}{k'-1}{t}\left(\genfrac{[}{]}{0pt}{}{n-t}{k-t} - q^{(k-t)^{2}}\genfrac{[}{]}{0pt}{}{n-k}{k-t}+1\right)\\
		&+ \left(\binom{\ell}{2}\genfrac{[}{]}{0pt}{}{2k'}{t+1}+\ell \genfrac{[}{]}{0pt}{}{k'}{t+1}\right)\genfrac{[}{]}{0pt}{}{n-t-1}{k-t-1}\Bigg]\genfrac{[}{]}{0pt}{}{n-t-m}{k'-t-m} \\
		&< \genfrac{[}{]}{0pt}{}{n-t}{k'-t}\genfrac{[}{]}{0pt}{}{n-t}{k-t},
	\end{aligned}
	\] 
	which contradicts the initial assumption that the product exceeds this bound.  
	
	\paragraph{Subcase $2$:} There exist two subspaces $G_{1}, G_{2} \in \mathcal{G}$ such that $\dim(G_{1} \cap G_{2}) \leq t-1$. Then we choose $\ell$ subspaces $G_{1}, G_{2}, \ldots, G_{\ell}$ in $\mathcal{G}$ that contain $G_{1}$ and $G_{2}$.

	Similar to the proof of Case 1, we obtain $|\mathcal{E}| \leq \ell-1$, $|\mathcal{E}_1| \leq \ell-1$, and
	$$|\mathcal{E}_3| \leq \ell \genfrac{[}{]}{0pt}{}{k'}{t+1}\cdot \genfrac{[}{]}{0pt}{}{n-t-1}{k-t-1}.$$

	It remains to estimate $|\mathcal{E}_{2}|$.
	
	For the first type, we define a family $$\mathcal{E}_{2}^{I}(T)=\{F\in \mathcal{E}_{2}\colon F\cap G_{i}=T \text{ whenever } \dim(F\cap G_{i})=t\},$$ for any $t$-dimensional subspace $T$ of $G_{j}$. From the choice of $G_{1}$ and $G_{2}$, we know that at least one of $F\cap G_{1},F\cap G_{2}$ is not equal to $T$. 
	
	Claim: If $\mathcal{E}_{2}^{I}(T)\neq \emptyset$, then $|\mathcal{E}_{2}^{I}(T)|\leq \ell-1$.
	
	Proof of claim: Suppose that $\mathcal{E}_{2}^{I}(T)\neq \emptyset$. Since for each $F \in \mathcal{E}_2$ we have $F \cap G_{1} \neq T$ or $F \cap G_{2} \neq T$, and $\dim(F \cap G_{1}) \leq t$, $\dim(F \cap G_{2}) \leq t$, it follows that for any $F \in \mathcal{E}_2^I(T)$, $\dim(F \cap G_{1}) \leq t-1$ or $\dim(F \cap G_{2}) \leq t-1$. If $|\mathcal{E}_2^{I}(T)| \geq \ell$, then we may choose any $\ell$ subspaces $F_{1}, F_{2}, \ldots, F_\ell$ in $\mathcal{E}_2^{I}(T)$.  Then these $\ell$ members in $\mathcal{E}_2^{I}(T)$ satisfy
$$\sum_{1 \leq i, j \leq \ell} \dim(G_{i} \cap F_{j}) \leq \ell(t-1)+(\ell^2-\ell)t < \ell^2 t-\ell+1,$$
which contradicts the assumption.
	
	Notice that $T$ must be a subspace of the intersection of some two $k'$-subspaces among $G_1,\dots,G_\ell$ if $\mathcal{E}_{2}^{I}(T)\neq \emptyset$, thus the number of such $T$ is less than $\binom{\ell}{2}\genfrac{[}{]}{0pt}{}{k'-1}{t}$. Hence the number of subspaces in $\mathcal{E}_2$ of the first type is less than $\binom{\ell}{2}\genfrac{[}{]}{0pt}{}{k'-1}{t}(\ell-1)$.
	
	For the second type, it is easy to see that if a $k$-dimensional subspace $F$ has intersections with $G_{i}$ and $G_{j}$ that are distinct $t$-dimensional subspaces, then the dimension of its intersection with $G_{i}+G_{j}$ is at least $t+1$. Note that $\dim(G_{i}+G_{j})\leq 2k'$ since $G_{i}$ and $G_{j}$ are $k'$-dimensional subspaces. Therefore, we conclude that
	$$|\mathcal{E}_{2}|\leq \binom{\ell}{2}\genfrac{[}{]}{0pt}{}{k'-1}{t}(\ell-1)+\binom{\ell}{2}\genfrac{[}{]}{0pt}{}{2k'}{t+1}\cdot \genfrac{[}{]}{0pt}{}{n-t-1}{k-t-1}.$$
	
	Since $\mathcal{F}\subseteq \mathcal{E}\cup \mathcal{E}_{1}\cup \mathcal{E}_{2}\cup \mathcal{E}_{3}$, we have 
	$$|\mathcal{F}|\leq  \left(\binom{\ell}{2}\genfrac{[}{]}{0pt}{}{k'-1}{t}+2\right)(\ell-1)+\left(\binom{\ell}{2}\genfrac{[}{]}{0pt}{}{2k'}{t+1}+\ell\genfrac{[}{]}{0pt}{}{k'}{t+1}\right)\genfrac{[}{]}{0pt}{}{n-t-1}{k-t-1}.$$
	By assumption $|\mathcal{F}|\cdot|\mathcal{G}|\geq \genfrac{[}{]}{0pt}{}{n-t}{k-t} \genfrac{[}{]}{0pt}{}{n-t}{k'-t}$ and Lemma \ref{Upperbound2}, we have  $$|\mathcal{G}| \geq \ell\sum_{h=t}^{k'} q^{(k-h)(k'-h)} \genfrac{[}{]}{0pt}{}{k}{h}\genfrac{[}{]}{0pt}{}{n-k}{k'-h}+\ell,$$
a similar argument as in Case 1 yields the same contradiction.
	
	In summary, we have completed the proof.
	\qed
\end{proof}

If $\mathcal{G}$ does not contain a sunflower $\mathcal{S}_{q}(k',t,r')$, a similar argument shows that $|\mathcal{F}||\mathcal{G}|< \genfrac{[}{]}{0pt}{}{n-t}{k-t}\genfrac{[}{]}{0pt}{}{n-t}{k'-t}$, and thus the first part of Theorem~\ref{Main} is proved.

By Lemma~\ref{Nsunflower}, if $|\mathcal{F}||\mathcal{G}|= \genfrac{[}{]}{0pt}{}{n-t}{k'-t}\genfrac{[}{]}{0pt}{}{n-t}{k-t}$, then $\mathcal{F}$ and $\mathcal{G}$ must contain a sunflower $\mathcal{S}_{q}(k,t,r)$ and a sunflower $\mathcal{S}_{q}(k',t,r')$, respectively.  Then, by Lemma~\ref{sunflower}, every subspace in $\mathcal{G}$ contains the kernel $K$ of $\mathcal{S}_{q}(k,t,r)$ and every subspace in $\mathcal{F}$ contains the kernel $K'$ of $\mathcal{S}_{q}(k',t,r')$. If $K\neq K'$, since $n\geq  (2k-t+1)(t+1)+(k-t+1)k'+k+2\ell-1$, we can choose $\ell$ subspaces $F_{1},\ldots,F_{\ell}\in \mathcal{F}$ and  $\ell$ subspaces $G_{1},\ldots,G_{\ell}\in \mathcal{G}$ such that $\dim(F_{i}\cap G_{j})\leq t-1$ holds for every $i,j\in [\ell]$. Then $\sum_{1 \leq i, j \leq \ell} \dim(G_{i} \cap F_{j}) \leq \ell^2(t-1) < \ell^2 t-\ell+1,$ which contradicts the assumption that $\mathcal{F}$ and $\mathcal{G}$ are $\ell$-weakly cross $t$-intersecting families. Thus, $K=K'$ and the Theorem~\ref{Main} is proved.

\section{Conclusion}

In this paper, we provided an alternative proof of the set version of the $\ell$-weakly cross $t$-intersecting theorem and determined an explicit lower bound for $n$ with some constraints on parameters $k,k',t$ and $\ell$. We also gave a subspace version of $\ell$-weakly cross $t$-intersecting theorem.  Theorem \ref{Cross} was further improved by Wen and Lv who proved that lower bound for $n$ does not depend on $t$ \cite{Wen2026}. It is worthwhile to determine a sharper lower bound for $n$ for general $k, k', t$ and $\ell$.  

\section*{Acknowledgement}
During the preparation of this work, the first author used DeepSeek as a tool for only grammar correction and moderate language refinement. After using this tool, the authors reviewed and edited the content as needed and take full responsibility for the content of the published article.
The work was supported by the National Natural Science Foundation of China under grants No. 12501463 (S. Yu) and No. 12271390 (L. Ji).

\end{document}